\newtheorem{theorem}{Theorem}[section]
\newtheorem{lemma}[theorem]{Lemma}
\newtheorem{proposition}[theorem]{Proposition}
\newtheorem{corollary}[theorem]{Corollary}
\theoremstyle{definition}
\newtheorem{definition}[theorem]{Definition}
\newtheorem*{remark}{Remark}
\DeclareMathOperator{\Ext}{Ext}
\DeclareMathOperator{\Hom}{Hom}
\DeclareMathOperator{\Tor}{Tor}
\DeclareMathOperator{\cok}{cok}
\newcommand{\cat}[1]{\mathcal{#1}}           
\newcommand{\tensor}{\otimes}
\newcommand{\class}[1]{\mathcal{#1}}   
\newcommand{\Z}{\mathbb{Z}}
\newcommand{\Q}{\mathbb{Q/Z}}
\newcommand{\mathcolon}{\colon\,} 
\newcommand{\ch}{\textnormal{Ch}(R)}
\newcommand{\cha}[1]{\textnormal{Ch}(\mathcal{#1})}
\newcommand{\rmod}{R\text{-Mod}}
\newcommand{\tilclass}[1]{\widetilde{\class{#1}}}
\newcommand{\dgclass}[1]{dg\widetilde{\class{#1}}}
\newcommand{\dwclass}[1]{dw\widetilde{\class{#1}}}
\newcommand{\exclass}[1]{ex\widetilde{\class{#1}}}
\newcommand{\rightperp}[1]{#1^{\perp}}
\newcommand{\leftperp}[1]{{}^\perp #1}
\newcommand{\homcomplex}{\mathit{Hom}}
\begin{document}

\title[Ding modules and complexes]{on Ding injective, Ding projective, and Ding flat modules and complexes}


\author{James Gillespie}
\address{Ramapo College of New Jersey \\
         School of Theoretical and Applied Science \\
         505 Ramapo Valley Road \\
         Mahwah, NJ 07430}
\email[Jim Gillespie]{jgillesp@ramapo.edu}
\urladdr{http://pages.ramapo.edu/~jgillesp/}

\date{\today}

\begin{abstract}
We characterize Ding modules and complexes over Ding-Chen rings. We show that over a Ding-Chen ring $R$, the Ding projective (resp. Ding injective, resp. Ding flat) $R$-modules coincide with the Gorenstein projective (resp. Gorenstein injective, resp. Gorenstein flat) modules, which in turn are nothing more than modules appearing as a cycle of an exact complex of projective (resp. injective, resp. flat) modules. We prove a similar characterization for chain complexes of $R$-modules: A complex $X$ is Ding projective (resp. Ding injective, resp. Ding flat) if and only if each component $X_n$ is Ding projective (resp. Ding injective, resp. Ding flat). Along the way, we generalize some results of Stovicek and Bravo-Gillespie-Hovey to obtain other interesting corollaries. For example, we show that over any Noetherian ring, any exact chain complex with Gorenstein injective components must have all cotorsion cycle modules. That is, $\Ext^1_R(F,Z_nI) = 0$ for any such complex $I$ and flat module $F$. On the other hand, over any coherent ring, the cycles of any exact complex $P$ with projective components must satisfy $\Ext^1_R(Z_nP,A) = 0$ for any absolutely pure module $A$.
\end{abstract}

\maketitle

\section{introduction}\label{sec-intro}
Let $R$ be a Gorenstein ring in the sense of~\cite{iwanaga, iwanaga2}.  This is a left and right Noetherian ring $R$ having finite injective dimension as both a left and right module over itself. Over such rings, exact chain complexes of projective, injective, and flat $R$-modules each have very nice homological properties. In particular, for an exact complex $P$ of projectives, the complex $\Hom_R(P,Q)$ is also exact for any projective module $Q$. In general, such complexes are called \emph{totally acyclic complexes of projectives} and the modules appearing as a cycle $Z_nP$ of such a complex are called \emph{Gorenstein projective}. So over Gorenstein rings, any exact complex of projectives is totally acyclic and their cycles are all Gorenstein projective. Similar statements hold for exact complexes of injectives and flats, and the corresponding \emph{Gorenstein injective} and \emph{Gorenstein flat} modules. Gorenstein homological algebra is the study of these modules and complexes and the theory is particularly satisfying over Gorenstein rings. In this case, most results from traditional homological algebra have an analog in Gorenstein homological algebra. The book~\cite{enochs-jenda-book} is a standard reference, but many authors have studied the subject.  

Stovicek recently proved the coherent analog of the above result concerning totally acyclic complexes of injectives~\cite[Prop.~7.9]{stovicek-purity}. It raises the same question for the projective and flat analogs, which is answered in this paper. Before explaining further, first recall that a (left) coherent ring is one in which all finitely generated (left) ideals are finitely presented. A ring that is both left and right coherent is called coherent and these include all Noetherian and Von Neumann regular rings. A lesson learned from~\cite{stenstrom-fp} is that many results in homological algebra extend from Noetherian to coherent rings by replacing finitely generated modules with finitely presented modules. In the process, injective modules are replaced with FP-injective (i.e.~absolutely pure) modules.
In this way, a Ding-Chen ring is a coherent ring $R$ which has finite absolutely pure (i.e.~FP-injective) dimension as both a left and right module over itself. They were introduced in~\cite{ding and chen 93, ding and chen 96}. Since a coherent ring is Noetherian if and only if the FP-injective modules coincide with the injective modules, the Ding-Chen rings are nothing more than Gorenstein rings whenever $R$ is Noetherian.

In~\cite{gillespie-ding}, the author noted that the work of Ding, Mao and Li,  see especially~\cite{ding and mao 05, ding and mao 07, ding and mao 08, ding and mao and li 09}, provides a natural way to extend notions from Gorenstein homological algebra from Noetherian to coherent rings. In the process, the Gorenstein modules are replaced by Ding's modules. For example, we say a module is \emph{Ding projective} if it is a cycle module of some exact complex $P$ of projectives such that $\Hom_R(P,F)$ remains exact for all flat modules $F$. See Definitions~ \ref{def-Ding injectives}, \ref{def-Ding projectives}, and~\ref{def-Ding flats}. It admittedly seems strange to require that $\Hom_R(P,F)$ remain exact for all flat modules $F$ rather than just all projectives. After all, it feels like we are requiring too much. However, it was shown in~\cite{bravo-gillespie-hovey} that the Ding projectives are the cofibrant objects of an especially nice Quillen model structure on $R$-Mod, the category of (left) $R$-modules, whenever $R$ is coherent. It implies that every module $M$ can be approximated by a Ding projective and that the full subcategory of all Ding projective modules naturally form a Frobenius category. We think of the associated stable category as the (projective) stable module category of $R$. The analog of this for the usual Gorenstein projectives over Noetherian rings simply doesn't seem to be true, in general. 

The first result of this paper is an extension, to Ding-Chen rings, of the well known result concerning Gorenstein rings described in the first paragraph. Part (2) is the result of Stovicek which motivated the analogous question for Ding projectives and Ding flats.

\begin{theorem}\label{them-1}
Let $R$ be a Ding-Chen ring. That is, a left and right coherent ring $R$ such that ${}_RR$ and $R_R$ each have finite absolutely pure dimension. Let $X$ be an exact chain complex. Then
\begin{enumerate}
\item If each component $X_n$ is projective, then each cycle $Z_nX$ is Ding projective. Indeed $\Hom_R(X,F)$ remains exact for all flat modules $F$. 
\item If each component $X_n$ is injective, then each cycle $Z_nX$ is Ding injective. Indeed $\Hom_R(A,X)$ remains exact for all absolutely pure $A$. 
\item If each component $X_n$ is flat, then each cycle $Z_nX$ is Ding flat. Indeed $A \otimes_R X$ remains exact for all absolutely pure $A$. 
\end{enumerate}
Consequently, the Gorenstein modules coincide with the Ding modules whenever $R$ is a Ding-Chen ring. 
\end{theorem}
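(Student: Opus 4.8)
\textit{The plan is} to treat the three parts largely in sequence. Part (2) is essentially Stovicek's \cite[Prop.~7.9]{stovicek-purity}, which already holds over any coherent ring, so I would simply recall it; the real work is in (1) and (3), which I would derive from (2) together with character duality and the defining finiteness of Ding--Chen rings, after which the closing sentence is formal. Throughout, write $(-)^+=\Hom_\Z(-,\Q)$ for the character dual and recall the standard coherent-ring facts that $M^+$ is injective when $M$ is flat, that $M^+$ is flat when $M$ is absolutely pure, that $(-)^+$ is exact and faithful (so it preserves and reflects exactness of complexes), and that there are natural isomorphisms $\Hom_R(A,B^+)\cong(B\tensor_R A)^+$ and $\Ext^i_R(A,B^+)\cong\Tor^R_i(B,A)^+$.

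First I would isolate the absolutely pure analogues of (1) and (3) over an arbitrary coherent ring: $(1')$ if $P$ is an exact complex of projectives then $\Hom_R(P,A)$ is exact for every absolutely pure $A$ (equivalently $\Ext^1_R(Z_nP,A)=0$ for all $n$, since the components of $P$ are projective), and $(3')$ if $P$ is an exact complex of flats then $A\tensor_R P$ is exact for every absolutely pure $A$. Statement $(3')$ falls out of (2) by duality: $P^+$ is an exact complex of injective right $R$-modules, Stovicek's result gives $\Hom_R(A,P^+)$ exact for every absolutely pure right $R$-module $A$, and the adjunction isomorphism above rewrites $\Hom_R(A,P^+)$ as $(A\tensor_R P)^+$, whose exactness forces that of $A\tensor_R P$. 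For $(1')$ one can attempt the same strategy -- dualizing an exact complex of projectives to an exact complex of injectives and extracting, via the $\Ext$/$\Tor$ duality, that $\Tor^R_i(A,Z_nP)=0$ for all $i\ge1$ and all absolutely pure right $R$-modules $A$ -- but converting this $\Tor$-vanishing into the required $\Ext^1_R(Z_nP,A)=0$ is not formal; the robust route to $(1')$ is to adapt the deconstructibility/totally-acyclic arguments of \cite{stovicek-purity} and \cite{bravo-gillespie-hovey}, using that over a coherent ring the absolutely pure modules form a definable class, closed under products, filtered colimits, pure submodules and cosyzygies. This is the step I expect to be the main obstacle; everything else is formal.

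With $(1')$ in hand, part (1) follows from a classical property of Ding--Chen rings: every flat module $F$ has finite absolutely pure (FP-injective) dimension, so there is an exact coresolution $0\to F\to A^0\to\cdots\to A^m\to 0$ with each $A^j$ absolutely pure. Applying $\Hom_R(P,-)$ produces a double complex with the finitely many columns $\Hom_R(P,F),\Hom_R(P,A^0),\dots,\Hom_R(P,A^m)$; its rows are exact because each $P_i$ is projective (so $\Hom_R(P_i,-)$ is exact on the coresolution), and the columns indexed by $A^0,\dots,A^m$ are exact by $(1')$. Bounded horizontal extent together with exact rows makes the total complex exact, and then exactness of every column but the first forces $\Hom_R(P,F)$ exact, i.e.\ $\Ext^1_R(Z_nP,F)=0$ for all $n$; thus each $Z_nP$ is Ding projective. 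Part (3) is already contained in $(3')$, since injective modules are in particular absolutely pure, so exactness of $A\tensor_R X$ against all absolutely pure $A$ in particular witnesses each $Z_nX$ as Ding flat. Part (2) is Stovicek's theorem recalled.

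Finally, the coincidence of the Gorenstein and Ding modules is formal given (1)--(3). In each variance the Ding condition is a priori stronger than the Gorenstein one, its test class being larger (projectives $\subseteq$ flats in the projective case, injectives $\subseteq$ absolutely pure in the injective and flat cases), so Ding always implies Gorenstein, with no hypothesis on $R$. Conversely, a Gorenstein projective module is by definition a cycle of \emph{some} exact complex $P$ of projectives; part (1) shows that any such $P$ automatically has $\Hom_R(P,F)$ exact for every flat $F$, hence that the module is Ding projective. The Gorenstein injective and Gorenstein flat cases are identical, invoking parts (2) and (3) respectively. Therefore over a Ding--Chen ring the two hierarchies of modules agree.
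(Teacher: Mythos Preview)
Your overall plan is sound, but there is one factual error about the range of validity and one acknowledged gap that deserves a sharper resolution.

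\textbf{On coherent versus Ding--Chen.} You assert that part~(2), Stovicek's result, already holds over any coherent ring. It does not: it requires the Ding--Chen hypothesis, which is what guarantees that absolutely pure modules have finite flat dimension. Over a general Noetherian non-Gorenstein ring, exact complexes of injectives need not remain exact under $\Hom_R(I,-)$ for injective $I$, so $(2)$ fails. Your $(3')$ inherits the same defect: over a Noetherian ring it would force every exact complex of flats to be F-totally-acyclic, which is again essentially a Gorenstein condition. So neither $(2)$ nor $(3')$ is available over an arbitrary coherent ring. This does not harm the theorem, which concerns Ding--Chen rings, and your character-duality derivation of $(3')$ from $(2)$ is perfectly valid there; but the intermediate claims should be restricted. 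By contrast, $(1')$ genuinely does hold over any two-sided coherent ring---this is the paper's Theorem~3.5.

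\textbf{On the gap at $(1')$.} You correctly flag this as the crux and point toward \cite{bravo-gillespie-hovey}, but the precise tool deserves to be named: it is \cite[Theorem~A.6]{bravo-gillespie-hovey}. For a complex $P$ of projectives and a duality pair $(\class{C},\class{D})$ with $\class{D}$ closed under pure quotients, one has that $\Hom_R(P,D)$ is exact for all $D\in\class{D}$ if and only if $C\otimes_R P$ is exact for all $C\in\class{C}$. Over a coherent ring, with $\class{C}$ the flat right modules and $\class{D}$ the absolutely pure left modules, this reduces $(1')$ to the triviality that $F\otimes_R P$ is exact for all flat right $F$. This is precisely the conversion you called ``not formal''; the content is the purity machinery packaged in Theorem~A.6, not a bespoke deconstructibility argument.

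\textbf{Comparison with the paper.} The paper in fact uses the same duality theorem, but in the other direction, to prove (1) in a single step: it converts ``$\Hom_R(P,F)$ exact for all flat left $F$'' into ``$A\otimes_R P$ exact for all absolutely pure right $A$'', and then observes that such $A$ have finite flat dimension over a Ding--Chen ring. Your route via $(1')$ followed by a finite absolutely-pure coresolution of a flat module and a double-complex argument is correct but takes two steps where one suffices. Likewise, for (3) the paper argues directly from the finite flat dimension of absolutely pure modules rather than dualizing to (2); your route works over Ding--Chen rings but is less direct.
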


Before discussing the proof methods we note that Gorenstein \emph{chain complexes} have also been studied quite a bit. For example, it has been known for some time that over Gorenstein rings, a chain complex $X$ is Gorenstein injective (resp. Gorenstein projective) if and only if each component is Gorenstein injective (resp. Gorenstein projective). See~\cite{garcia-rozas, enochs-estrada-iacob, GangYang-Liu models on complexes}. In~\cite{Liang-Liu, Ding-Chen-complex-models}, Yang, Liu, and Liang characterize the Ding complexes over a Ding-Chen ring. The next result is a surprising refinement of their characterization.

\begin{theorem}\label{them-2}
Let $R$ be a Ding-Chen ring and $X$ a chain complex of $R$-modules.
\begin{enumerate}
\item $X$ is Ding projective in $\ch$ if and only if each component $X_n$ is a Ding projective $R$-module.
\item $X$ is Ding injective in $\ch$ if and only if each component $X_n$ is a Ding injective $R$-module. 
\item $X$ is Ding flat in $\ch$ if and only if each component $X_n$ is a Ding flat $R$-module.
\end{enumerate}
Consequently, the Gorenstein complexes coincide with the Ding complexes whenever $R$ is a Ding-Chen ring. 
\end{theorem}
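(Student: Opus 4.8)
The plan is to deduce Theorem~\ref{them-2} from Theorem~\ref{them-1} via the cotorsion-pair formalism on $\ch$; I give the argument for part~(1) and indicate the (routine) modifications for parts~(2) and~(3) at the end. Recall that over a coherent ring the Ding projective modules form the left-hand class of a complete hereditary cotorsion pair $(\mathcal{DP},\mathcal{W})$ in $R$-Mod. By Theorem~\ref{them-1} together with standard facts about Ding--Chen rings, over such a ring $\mathcal{DP}=\mathcal{GP}$, the class $\mathcal{W}$ is at once the class of modules of finite flat, of finite projective, and of finite injective dimension, $\mathcal{DP}\cap\mathcal{W}$ is the class of projectives, and there is a single integer $d$ (the finitistic dimension, at most the FP-injective dimension of $R$) bounding the projective dimension of every module in $\mathcal{W}$. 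The usual lifting yields a complete hereditary cotorsion pair $(\dgclass{DP},\tilclass{W})$ on $\ch$, where $\tilclass{W}$ is the class of exact complexes with all cycles in $\mathcal{W}$ and $\dgclass{DP}$ is the class of complexes $X$ with $X_n\in\mathcal{DP}$ for all $n$ and $\homcomplex_R(X,W)$ exact for all $W\in\tilclass{W}$. The first task is to record that ``Ding projective in $\ch$'' means exactly ``lying in $\dgclass{DP}$'': that every object of $\dgclass{DP}$ is a cycle of a totally acyclic complex of projective complexes with $\homcomplex$ into flat complexes staying exact is the complete-resolution content of the fact that $(\dgclass{DP},\tilclass{W})$ is a projective cotorsion pair, and the converse is a definition chase; both are available from the model-structure formalism and from~\cite{Liang-Liu, Ding-Chen-complex-models}.

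Granting this, the forward direction of Theorem~\ref{them-2}(1) is immediate and uses only Theorem~\ref{them-1}(1): if $X$ is a cycle of an exact complex $\mathbf{P}^{\bullet}$ of projective complexes, then in each degree $n$ the complex $(\mathbf{P}^{\bullet})_n$ is an exact complex of projective $R$-modules having $X_n$ among its cycles, so $X_n$ is Ding projective --- and we do not even use the $\homcomplex$-into-flats condition. Hence the whole content is the converse: \emph{if $X$ is any complex with $X_n\in\mathcal{DP}$ for all $n$, then $X\in\dgclass{DP}$}. The only thing to verify beyond $X_n\in\mathcal{DP}$ is the exactness of $\homcomplex_R(X,W)$ for $W\in\tilclass{W}$; as $\tilclass{W}$ is closed under suspension this reduces to $\Ext^1$-vanishing in the degreewise-split exact structure, and since every component of a $W\in\tilclass{W}$ lies in $\mathcal{W}$ while $X_n\in\mathcal{DP}={}^{\perp}\mathcal{W}$, every short exact sequence $0\to W\to Y\to X\to 0$ of complexes splits in each degree, so this coincides with the ordinary $\Ext^1_{\ch}(X,W)$. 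It therefore suffices to prove $\Ext^i_{\ch}(X,W)=0$ for all $i\ge 1$ and all $W\in\tilclass{W}$.

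This I would prove by induction on the bound $d$. For $d=0$, $W$ is an exact complex with projective cycles, hence a summand of a coproduct $\bigoplus_{n}D^{n}(P_n)$ of disk complexes on projective modules; separating even- and odd-indexed summands turns each coproduct into one with a single nonzero summand in each homological degree, hence also a product, so $\Ext^i_{\ch}(X,\prod_n D^{n}(P_n))\cong\prod_n\Ext^i_{\ch}(X,D^{n}(P_n))\cong\prod_n\Ext^i_R(X_{n-1},P_n)=0$, using that $(\mathcal{DP},\mathcal{W})$ is hereditary, $X_{n-1}\in\mathcal{DP}$, and $P_n$ is projective, hence in $\mathcal{W}$. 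For $d\ge 1$, since $\ch$ has enough projectives pick a short exact sequence $0\to W'\to\mathbf{Q}\to W\to 0$ with $\mathbf{Q}$ a projective object of $\ch$; then $W'$ is exact (as $\mathbf{Q}$ and $W$ are), a short diagram chase shows the induced maps on cycles $Z_n\mathbf{Q}\to Z_nW$ are surjective, so $Z_nW'$ is a syzygy of $Z_nW$ relative to the projective module $Z_n\mathbf{Q}$ and hence has projective dimension $\le d-1$; thus $W'\in\tilclass{W}$ is covered by the case $d-1$, and the long exact sequence for $\Ext^{*}_{\ch}(X,-)$ together with the case $d=0$ applied to $\mathbf{Q}$ and the inductive hypothesis applied to $W'$ forces $\Ext^i_{\ch}(X,W)=0$ for all $i\ge 1$. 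This is precisely where Ding--Chen-ness enters essentially: over a general coherent ring $\mathcal{W}$ has modules of unbounded projective dimension and the induction cannot terminate. Parts~(2) and~(3) run identically with the Ding-injective and Ding-flat cotorsion pairs replacing $(\mathcal{DP},\mathcal{W})$, testing against $\homcomplex_R(A,-)$ and against $A\otimes_R-$ for $A$ ranging over the exact complexes whose cycles have finite dimension, and invoking Theorem~\ref{them-1}(2),(3); there the analogue of the $\Ext$-vanishing above is supplied by the cycle statements announced in the abstract (over a Noetherian ring $\Ext^1_R(F,Z_nI)=0$ for $F$ flat and $I$ an exact complex of injectives; over a coherent ring $\Ext^1_R(Z_nP,A)=0$ for $A$ absolutely pure and $P$ an exact complex of projectives).

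The step I expect to be most delicate is the coproduct/product interchange in the base case, which is the one point where the distinction between direct sums and direct products must be handled with care; the rest is bookkeeping. Finally, the concluding sentence of Theorem~\ref{them-2} is then formal: Theorem~\ref{them-1} identifies the Ding and Gorenstein module classes and makes the two notions of ``totally acyclic'' coincide over a Ding--Chen ring, so the componentwise characterization just obtained applies verbatim with ``Gorenstein'' in place of ``Ding'', exhibiting the Ding complexes and the Gorenstein complexes as one and the same class --- the complexes all of whose components are Gorenstein.
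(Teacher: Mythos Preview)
Your argument for part~(1) is correct and genuinely different from the paper's. The paper does \emph{not} use the uniform bound $d$ on projective dimension to run a finite induction on $\Ext^i_{\ch}(X,-)$; instead it devotes all of Section~\ref{sec-complexes of projective complexes} to generalizing the duality-pair result of~\cite[Theorem~A.6]{bravo-gillespie-hovey} from $\rmod$ to $\textnormal{Ch}(\ch)$, proving Theorem~\ref{thm-dual-exact}. This is then used to establish the chain-complex analogue of Theorem~\ref{them-1}(1), namely Theorem~\ref{them-Ding-projective-modules}: over a Ding--Chen ring \emph{every} exact complex of projective complexes is automatically Ding projective. Corollary~\ref{cor-Ding-projective-complexes} then finishes by an explicit construction: given $X$ with each $X_n$ Ding projective, one builds by hand an exact complex of projective complexes having $X$ as a cycle (using the degreewise-split embedding $X\hookrightarrow\bigoplus_n D^{n+1}(X_n)$ and short exact sequences $0\to X_n\to Q_n\to Y_n\to 0$ with $Q_n$ projective), and appeals to Theorem~\ref{them-Ding-projective-modules}. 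Your route bypasses Section~\ref{sec-complexes of projective complexes} entirely, trading it for the Yang--Liu--Liang identification of Ding projective complexes with $\dgclass{DP}$ and the standard lifted cotorsion pair $(\dgclass{DP},\tilclass{W})$. What you gain is brevity; what the paper gains is the stronger intermediate statement (Theorem~\ref{them-Ding-projective-modules}) and a proof that does not import the Yang--Liu--Liang characterization as a black box.

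Two remarks on the details. First, your worry about the base case is unfounded: once you have $\bigoplus_n D^n(P_n)=\prod_n D^n(P_n)$ (which holds simply because at most two summands contribute in each degree), the isomorphism $\Ext^i_{\ch}(X,\prod_n D^n(P_n))\cong\prod_n\Ext^i_{\ch}(X,D^n(P_n))$ is automatic in any Grothendieck category with exact products---the even/odd separation is unnecessary. Second, your sketch for part~(3) is too quick: the Ding flat condition is a $\overline{\otimes}$-test, not an $\Ext$-test, so there is no cotorsion pair $(\mathcal{DF},\mathcal{W})$ to lift in the same way. The paper handles this (Corollary~\ref{cor-Ding-flat-complexes}) by the same explicit-resolution construction as in the projective case, combined with Proposition~\ref{them-Ding-flat-modules}, and simply says ``replace \emph{projective} by \emph{flat}.'' Your induction-on-$d$ idea can be made to work for~(3) too, but via $\overline{\Tor}$ rather than $\Ext$; it is not literally ``identical'' to~(1) and~(2). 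Part~(2) does dualize cleanly as you indicate, and the paper's own proof (Corollary~\ref{cor-Ding-injective-complexes}) is likewise short, resting on Proposition~\ref{prop-Grothendieck direct limits} and Corollary~\ref{cor-fin-flat-dim} rather than on your dimension induction.
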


We now briefly describe our methods used and highlight other results in the paper. First, the injective cases are proved in a completely different fashion than their projective analogs, (while the flat analogs are relatively easy). For the injective case, we generalize the approach of Stovicek to record a useful result concerning cotorsion pairs and direct limits. If $(\class{W},\class{F})$ is a cotorsion pair, then $\class{W}$ is closed under direct limits whenever $\class{W}$ satisfies the two out of three property on short exact sequences; see Proposition~\ref{prop-direct limits}. The injective cases of Theorems~\ref{them-1} and~\ref{them-2} follow as corollaries, but the approach leads to a string of other interesting corollaries appearing in  Sections~\ref{sec-direct limits} and~\ref{sec-Ding injective}.  Perhaps the most interesting is Corollary~\ref{cor-cotorsions}. It implies that over \emph{any} Noetherian ring, any exact chain complex with Gorenstein injective components must have all cotorsion cycle modules. That is, $\Ext^1_R(F,Z_nI) = 0$ for any such complex $I$ and flat module $F$. 

The projective case requires a completely different approach than the injective one. Our proof of the projective part of Theorem~\ref{them-1} relies on a result proved in~\cite[Theorem~A.6]{bravo-gillespie-hovey}. To obtain the projective statement in Theorem~\ref{them-2} we in fact need to generalize this result to the category of chain complexes. Section~\ref{sec-complexes of projective complexes} is devoted to this, with Theorem~\ref{thm-dual-exact} being the main result. Finally, the following interesting result is proved in Theorem~\ref{them-projcycles}.

\begin{theorem}
Let $R$ be a coherent ring and $P$ any exact complex of projectives. Then $\Ext^1_R(Z_nP,A) = 0$ for any absolutely pure module $A$. 
\end{theorem}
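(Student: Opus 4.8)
The plan is to convert the statement about cycles into the exactness of a $\Hom$-complex, settle the case of complexes of finitely generated projectives by a character-module computation, and then invoke \cite{bravo-gillespie-hovey} to bridge to the general case. For the reformulation: applying $\Hom_R(-,A)$ to the short exact sequences $0\to Z_{n+1}P\to P_{n+1}\to Z_nP\to 0$ arising from the exact complex $P$, and using that $\Ext^1_R(P_{n+1},A)=0$, a routine diagram chase identifies the cohomology of the cochain complex $\Hom_R(P,A)$ in each degree with a group of the form $\Ext^1_R(Z_mP,A)$. Hence $\Ext^1_R(Z_nP,A)=0$ for all $n$ if and only if $\Hom_R(P,A)$ is exact, and it is this reformulated statement I would prove. (Even over a von Neumann regular ring, where every module is absolutely pure, this is not obvious: there it asserts that every cycle of an exact complex of projectives is projective.)

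For the engine, recall that since $R$ is coherent and $A$ is absolutely pure, the character module $A^{+}=\Hom_{\Z}(A,\Q)$ is a flat right $R$-module \cite{stenstrom-fp}. Because $\Q$ is injective, a complex of abelian groups is exact if and only if its character dual is, so it suffices to show $\Hom_R(P,A)^{+}$ is exact. There is a natural chain map $\theta\colon A^{+}\otimes_{R}P\to\Hom_R(P,A)^{+}$, $\phi\otimes p\mapsto(f\mapsto\phi(f(p)))$, and in each degree where the corresponding term of $P$ is finitely presented, $\theta_n$ is the classical isomorphism $\Hom_R(M,A)^{+}\cong A^{+}\otimes_{R}M$. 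Moreover $A^{+}\otimes_{R}P$ is exact: as $A^{+}$ is flat, $\Tor^{R}_{1}(Z_nP,A^{+})=0$, so tensoring the sequences above with $A^{+}$ stays exact, and splicing these gives exactness of $A^{+}\otimes_R P$. Thus if $P$ is an exact complex of finitely generated projectives, $\theta$ is an isomorphism of complexes and $\Hom_R(P,A)^{+}$ is exact. The same argument, run on a resolution of a finitely presented module $M$ by finitely generated projectives (available because $R$ is coherent), yields more: $\Ext^{i}_{R}(M,A)=0$ for all $i\geq1$ whenever $M$ is finitely presented and $A$ absolutely pure, so the class of absolutely pure modules is closed under cokernels of monomorphisms — a fact convenient for bookkeeping.

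The remaining step, passing from finitely generated projectives to arbitrary ones, is the real obstacle. A free module on an infinite set is not finitely presented, so $\theta$ need not be a degreewise isomorphism, and $\Hom_R(P_n,-)$ does not commute with the filtered colimits one would like to use to present $A$ (or $A^{+}$) by finite pieces; this is precisely why the two-sided infiniteness of $P$ cannot be circumvented by a naive limiting argument. To cross this gap I would invoke \cite[Theorem~A.6]{bravo-gillespie-hovey}, which is exactly the tool that controls an arbitrary exact complex of projectives over a coherent ring by its finitely generated approximations (and, for the complex-level version needed in Theorem~\ref{them-2}, its generalization Theorem~\ref{thm-dual-exact}). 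Combining this input with the finitely generated case above shows $\Hom_R(P,A)$ is exact for every absolutely pure $A$, hence $\Ext^{1}_{R}(Z_nP,A)=0$. I expect this reduction to be the crux; everything else is the soft homological algebra of character modules and cotorsion pairs.
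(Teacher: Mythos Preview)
Your reformulation to the exactness of $\Hom_R(P,A)$ and your key reference to \cite[Theorem~A.6]{bravo-gillespie-hovey} match the paper exactly, so the proof is correct in its essentials. However, your route is more circuitous than the paper's and your description of what Theorem~A.6 does is off. That theorem is not a device for reducing to finitely generated projectives; it is a duality-pair statement: for the pair $(\class{C},\class{D}) = (\text{flat right modules},\,\text{absolutely pure left modules})$, which is a perfect duality pair because $R$ is coherent, it asserts that $\Hom_R(P,A)$ is exact for all $A\in\class{D}$ if and only if $F\otimes_R P$ is exact for all $F\in\class{C}$. The paper simply observes that the latter holds trivially (any flat module tensored with an exact complex is exact), and the proof is finished in one line. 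Your entire middle section --- the character-module isomorphism $\theta$ and the finitely generated case --- is therefore unnecessary; Theorem~A.6 needs no such input and applies directly to an arbitrary complex of projectives. In fact your computation that $A^+\otimes_R P$ is exact is precisely the ``$F\otimes_R P$ exact for flat $F$'' side of the equivalence, so you had already verified the hypothesis of A.6 without realizing it.
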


Regarding the structure of the paper, after some preliminaries in Section~\ref{sec-prelim}, we first address the injective cases in Sections~\ref{sec-direct limits} and~\ref{sec-Ding injective}. Sections~\ref{sec-complexes of projective complexes} and~\ref{sec-Ding projectives} are devoted to the projective case, and Section~\ref{sec-flat} to the flat case.

\section{preliminaries}\label{sec-prelim}

Throughout the paper $R$ denotes a general ring with identity. An $R$-module will mean a left $R$-module, unless stated otherwise. The category of $R$-modules will be denoted $\rmod$. It is an abelian category.

\subsection{Chain complexes on abelian categories}\label{subsec-complexes}
Let $\cat{A}$ be an abelian category. We denote the corresponding category of chain complexes by $\cha{A}$. In the case $\cat{A} = \rmod$, we denote it by $\ch$. Our convention is that the differentials of our chain complexes lower degree, so $\cdots
\xrightarrow{} X_{n+1} \xrightarrow{d_{n+1}} X_{n} \xrightarrow{d_n}
X_{n-1} \xrightarrow{} \cdots$ is a chain complex. We also have the chain homotopy category of $\cat{A}$, denoted $K(\cat{A})$. Its objects are also chain complexes but its morphisms are chain homotopy classes of chain maps.
Given a chain complex $X$, the
\emph{$n^{\text{th}}$ suspension of $X$}, denoted $\Sigma^n X$, is the complex given by
$(\Sigma^n X)_{k} = X_{k-n}$ and $(d_{\Sigma^n X})_{k} = (-1)^nd_{k-n}$.
For a given object $A \in \cat{A}$, we denote the \emph{$n$-disk on $A$} by $D^n(A)$. This is the complex consisting only of $A \xrightarrow{1_A} A$ concentrated in degrees $n$ and $n-1$, and 0 elsewhere. We denote the \emph{$n$-sphere on $A$} by $S^n(A)$, and this is the complex consisting only of $A$ in degree $n$ and 0 elsewhere.

Given two chain complexes $X, Y \in \cha{A}$ we define $\homcomplex(X,Y)$ to
be the complex of abelian groups $ \cdots \xrightarrow{} \prod_{k \in
\Z} \Hom(X_{k},Y_{k+n}) \xrightarrow{\delta_{n}} \prod_{k \in \Z}
\Hom(X_{k},Y_{k+n-1}) \xrightarrow{} \cdots$, where $(\delta_{n}f)_{k}
= d_{k+n}f_{k} - (-1)^n f_{k-1}d_{k}$.
We get a functor
$\homcomplex(X,-) \mathcolon \cha{A} \xrightarrow{} \textnormal{Ch}(\Z)$. Note that this functor takes exact sequences to left exact sequences,
and it is exact if each $X_{n}$ is projective. Similarly the contravariant functor $\homcomplex(-,Y)$ sends exact sequences to left exact sequences and is exact if each $Y_{n}$ is injective. It is an exercise to check that the homology satisfies $H_n[Hom(X,Y)] = K(\cat{A})(X,\Sigma^{-n} Y)$.

Being an abelian category, $\cha{A}$ comes with Yoneda Ext groups. In particular, $\Ext^1_{\cha{A}}(X,Y)$ will denote the group of (equivalences classes) of short exact sequences $0 \xrightarrow{} Y \xrightarrow{} Z \xrightarrow{} X \xrightarrow{} 0$ under the Baer sum operation. There is a subgroup $\Ext^1_{dw}(X,Y) \subseteq \Ext^1_{\cha{A}}(X,Y)$ consisting of the ``degreewise split'' short exact sequences. That is,
those for which each $0 \xrightarrow{} Y_n \xrightarrow{} Z_n \xrightarrow{} X_n \xrightarrow{} 0$ is split exact. The following lemma gives a well-known connection between $\Ext^1_{dw}$ and the above hom-complex $\homcomplex$.

\begin{lemma}\label{lemma-homcomplex-basic-lemma}
For chain complexes $X$ and $Y$, we have isomorphisms:
$$\Ext^1_{dw}(X,\Sigma^{(-n-1)}Y) \cong H_n \homcomplex(X,Y) =
K(\cat{A})(X,\Sigma^{-n} Y)$$ In particular, for chain complexes $X$ and $Y$, $\homcomplex(X,Y)$ is
exact iff for any $n \in \Z$, any chain map $f \mathcolon \Sigma^nX \xrightarrow{} Y$ is
homotopic to 0 (or iff any chain map $f \mathcolon X \xrightarrow{} \Sigma^nY$ is homotopic
to 0).
\end{lemma}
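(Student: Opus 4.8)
The plan is to reduce everything to a single natural isomorphism $\Ext^1_{dw}(X,A) \cong K(\cat{A})(X,\Sigma^1 A)$, valid for arbitrary chain complexes $X$ and $A$, and then to specialize to $A = \Sigma^{-n-1}Y$, for which $\Sigma^1 A = \Sigma^{-n}Y$, invoking the identity $H_n\homcomplex(X,Y) = K(\cat{A})(X,\Sigma^{-n}Y)$ already recorded above. That identity I would not belabor: unwinding definitions, an element $f = (f_k) \in \prod_k \Hom(X_k,Y_{k+n})$ is an $n$-cycle of $\homcomplex(X,Y)$ precisely when $\delta_n f = 0$, which is exactly the condition that $f$ be a chain map $X \to \Sigma^{-n}Y$ (recall $(\Sigma^{-n}Y)_k = Y_{k+n}$), while $f$ is an $n$-boundary precisely when that chain map is null-homotopic.

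For the main isomorphism I would use an explicit ``gluing data'' description. Given a degreewise split short exact sequence $\xi \colon 0 \to A \to Z \to X \to 0$, choose in each degree $k$ a splitting $Z_k \cong A_k \oplus X_k$ compatible with the structure maps. With respect to these identifications the differential of $Z$ is forced to have the block form $(a,x) \mapsto (d^A_k a + h_k x,\; d^X_k x)$ for a uniquely determined family $h_k \colon X_k \to A_{k-1}$, and $d^Z d^Z = 0$ translates to $d^A_{k-1}h_k + h_{k-1}d^X_k = 0$ for all $k$; since $(d_{\Sigma^1 A})_k = -d^A_{k-1}$, this says exactly that $h$ is a chain map $X \to \Sigma^1 A$. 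I would then check that changing the degreewise splittings, recorded by a family $\sigma_k \colon X_k \to A_k$, replaces $h$ by $h + (d^A\sigma - \sigma d^X)$, which is chain homotopic to $h$ via $-\sigma$; hence the class $[h] \in K(\cat{A})(X,\Sigma^1 A)$ depends only on the equivalence class of $\xi$. Running this backwards --- any chain map $h \colon X \to \Sigma^1 A$ defines a complex $Z$ by the same block formula, and homotopic choices give equivalent extensions --- produces a two-sided inverse, so $[\xi] \mapsto [h]$ is a bijection.

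It remains to see that this bijection is additive and natural. For additivity one realizes the Baer sum $\xi + \xi'$ using compatible splittings and computes directly that its off-diagonal datum is $h + h'$. Naturality in $X$ and $A$ comes down to the fact that pulling $\xi$ back along a chain map $X' \to X$ (resp.\ pushing it out along $A \to A'$) precomposes (resp.\ postcomposes) $h$. Setting $A = \Sigma^{-n-1}Y$ then yields $\Ext^1_{dw}(X,\Sigma^{-n-1}Y) \cong K(\cat{A})(X,\Sigma^{-n}Y) = H_n\homcomplex(X,Y)$. For the ``in particular'' statement: $\homcomplex(X,Y)$ is exact iff $H_n\homcomplex(X,Y) = 0$ for all $n$, iff $K(\cat{A})(X,\Sigma^{-n}Y) = 0$ for all $n$, i.e.\ every chain map $X \to \Sigma^{-n}Y$ is null-homotopic; and since $\Sigma$ is an automorphism of $K(\cat{A})$, this is equivalent to every chain map $\Sigma^n X \to Y$ being null-homotopic.

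The only genuine obstacle is sign bookkeeping. One has to pin down the signs in the block differential, in the change-of-splitting formula, and in the null-homotopy relation so that they are all consistent with the conventions $(d_{\Sigma^n X})_k = (-1)^n d_{k-n}$ and $(\delta_n f)_k = d_{k+n}f_k - (-1)^n f_{k-1}d_k$ fixed above, and then confirm that exactly these signs make the Baer sum correspond to addition. This is routine but is precisely where a careless computation goes wrong.
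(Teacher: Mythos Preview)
Your argument is correct and follows the standard route: the equality $H_n\homcomplex(X,Y)=K(\cat{A})(X,\Sigma^{-n}Y)$ is a direct unwinding of the differential $\delta$, and the isomorphism $\Ext^1_{dw}(X,A)\cong K(\cat{A})(X,\Sigma^1A)$ via the off-diagonal ``gluing datum'' of a degreewise-split extension is exactly the classical cocycle description, with the sign check $d^A_{k-1}h_k+h_{k-1}d^X_k=0$ matching the convention $(d_{\Sigma^1A})_k=-d^A_{k-1}$ as you note. The paper itself does not supply a proof of this lemma; it is introduced as ``a well-known connection'' and stated without argument, so there is nothing to compare against beyond observing that your proof is the expected one.
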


In the case of $\cat{A} = R\textnormal{-Mod}$, we recall the usual tensor product of chain complexes. Given that $X$ (resp. $Y$) is a complex of right (resp. left) $R$-modules, the tensor product $X
\otimes Y$ is defined by $(X \otimes Y)_n = \oplus_{i+j=n} (X_i
\otimes Y_j)$ in degree $n$. The boundary map $\delta_n$ is defined
on the generators by $\delta_n (x \otimes y) = dx \otimes y +
(-1)^{|x|} x \otimes dy$, where $|x|$ is the degree of the element
$x$.

\subsection{The modified Hom and Tensor complexes}\label{subsec-modified hom and tensor} Here again, $\ch$ denotes the category of chain complexes of $R$-modules. The above $\homcomplex$ is often referred to as the \emph{internal hom}, for in the case that $R$ is commutative, $\homcomplex(X,Y)$ is again an object of $\ch$. Note that the cycles in degree 0 of the internal hom coincide with the \emph{external hom} functor: $Z_0[\homcomplex(X,Y)] \cong \Hom_{\ch}(X,Y)$. This idea in fact is useful to define an alternate internal hom as follows. Given $X, Y \in \ch$, we define $\overline{\homcomplex}(X,Y)$ to be the complex $$\overline{\homcomplex}(X,Y)_n = Z_n\homcomplex(X,Y)$$ with differential $$\lambda_n : \overline{\homcomplex}(X,Y)_n \xrightarrow{} \overline{\homcomplex}(X,Y)_{n-1}$$ defined by $(\lambda f)_k = (-1)^nd_{k+n}f_k$. Notice that the degree $n$ component of $\overline{\homcomplex}(X,Y)$ is exactly $\Hom_{\ch}(X,\Sigma^{-n}Y)$. In this way we get an internal hom $\overline{\homcomplex}$ which is useful for categorical considerations in $\ch$. For example, $\overline{\homcomplex}(X,-)$ is a left exact functor, and is exact if and only if $X$ is projective in the category $\ch$. On the other hand, $\overline{\homcomplex}(-,Y)$ is exact if and only if $Y$ is injective in $\ch$. There are corresponding derived functors which we denote by $\overline{\mathit{Ext}}^i$. They satisfy that $\overline{\mathit{Ext}}^i(X,Y)$ is a complex whose degree $n$ is $\Ext^i_{\ch}(X,\Sigma^{-n}Y)$.

Similarly, the usual tensor product of chain complexes does not characterize categorical flatness. For this we need the modified tensor product and its left derived torsion functor from~\cite{enochs-garcia-rozas} and~\cite{garcia-rozas}. We will denote it by $\overline{\otimes}$, and it is defined in terms of the usual tensor product $\otimes$ as follows. Given a complex $X$ of right $R$-modules and a complex $Y$ of left $R$-modules, we define $X \overline{\otimes} Y$ to be the complex whose $n^{\text{th}}$ entry is $(X \otimes Y)_n / B_n(X \otimes Y)$ with boundary map  $(X \otimes Y)_n / B_n(X \otimes Y) \rightarrow (X \otimes Y)_{n-1} / B_{n-1}(X \otimes Y)$ given by
\[
\overline{x \otimes y} \mapsto \overline{dx \otimes y}.
\]
This defines a complex and we get a bifunctor $ - \overline{\otimes} - $ which is right exact in each variable. We denote the corresponding left derived functors by $\overline{\Tor}_i$. We refer the reader to~\cite{garcia-rozas} for more details.

\subsection{Finitely generated projective complexes} A standard characterization of projective objects in $\ch$ is the following: A complex $P$ is \emph{projective} if and only if it is an exact complex with each cycle $Z_nP$ a projective $R$-module. We also recall that, by definition, a chain complex $X$ is \emph{finitely generated} if whenever $X = \Sigma_{i \in I} S_i$, for some collection $\{S_i\}_{i \in I}$ of subcomplexes of $X$, then there exists a finite subset $J \subseteq I$ for which $X = \Sigma_{i \in J} S_i$. It is a standard fact that $X$ is finitely generated if and only if it is bounded (above and below) and each $X_n$ is finitely generated. We say that a chain complex $X$ is of \textbf{type $\boldsymbol{FP_{\infty}}$} if it has a projective resolution by finitely generated projective complexes. Certainly any such $X$ is finitely presented and hence finitely generated.

\subsection{Absolutely clean and level complexes; character duality}\label{subsec-character duality}
The so-called level and absolutely clean modules were introduced in~\cite{bravo-gillespie-hovey} as generalizations of flat modules over coherent rings and injective modules over Noetherian rings. The same notions in the category $\ch$ were also studied in~\cite{bravo-gillespie}. Here we recall some definitions and results from~\cite{bravo-gillespie}.

\begin{definition}
We call a chain complex $A$ \textbf{absolutely clean} if $\Ext^1_{\ch}(X,A)=0$ for all chain complexes $X$ of type $FP_{\infty}$. Equivalently, if $\overline{\mathit{Ext}}^1(X,A) = 0$ for all complexes $X$ of right $R$-modules of type $FP_{\infty}$. On the other hand, we call a chain complex $L$ \textbf{level} if $\overline{\Tor}_1(X,L) = 0$ for all chain complexes $X$ of right $R$-modules of type $FP_{\infty}$.
\end{definition}

We refer to~\cite[Prop.~2.6/4.6]{bravo-gillespie} for proof of the following.

\begin{proposition}\label{prop-level chain complexes}
A chain complex $A$ is absolutely clean if and only if $A$ is exact and each $Z_nA$ is an absolutely clean $R$-module.
A chain complex $L$ is level if and only if $L$ is exact and each $Z_nL$ is a level $R$-module.
\end{proposition}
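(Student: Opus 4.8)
The plan is to prove the assertion about absolutely clean complexes directly, by the two containments, and then deduce the assertion about level complexes from it by $\Q$-duality. The structural input I will use throughout is the description of complexes of type $FP_\infty$ from~\cite{bravo-gillespie}: a complex $X$ is of type $FP_\infty$ in $\ch$ if and only if $X$ is bounded and each component $X_n$ is an $R$-module of type $FP_\infty$. One direction is immediate — applying the degree-$n$ functor to a resolution of $X$ by finitely generated projective complexes gives a resolution of $X_n$ by finitely generated projective modules — and the payoff of the other is that, for a module $M$ of type $FP_\infty$, the sphere $S^n(M)$ is of type $FP_\infty$ in $\ch$, while each disk $D^n(Q)$ on a finitely generated projective $Q$ is a finitely generated projective complex. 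I expect this last fact to be the main obstacle: to prove it one must build, out of a resolution of $M$ by finitely generated projectives, an honest resolution of $S^n(M)$ by finitely generated projective complexes — surjecting at each stage onto the current syzygy from a finite sum of disk complexes $D^k(P_i)$ and passing to kernels, then checking that the kernels remain bounded with components of type $FP_\infty$ so the process iterates. The two remaining background ingredients — that absolutely clean $R$-modules are closed under extensions, and that an $R$-module $N$ is level if and only if its character module $N^{+}=\Hom_{\Z}(N,\Q)$ is absolutely clean (together with the analogous statement for complexes) — are available from~\cite{bravo-gillespie-hovey, bravo-gillespie}.

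For the forward direction of the absolutely clean statement, suppose $A$ is absolutely clean in $\ch$. To see that $A$ is exact I test against $S^n(R)$: applying $\Hom_{\ch}(-,A)$ to $0\to S^{n-1}(R)\to D^n(R)\to S^n(R)\to 0$ and using the identifications $\Hom_{\ch}(D^n(R),A)\cong A_n$ and $\Hom_{\ch}(S^{n-1}(R),A)\cong Z_{n-1}A$, together with $\Ext^1_{\ch}(D^n(R),A)=0$ (as $D^n(R)$ is projective in $\ch$) and $\Ext^1_{\ch}(S^n(R),A)=0$ (as $S^n(R)$ is of type $FP_\infty$), the long exact sequence shows that the map $A_n\to Z_{n-1}A$, $a\mapsto d^A_n a$, is onto; letting $n$ vary this says exactly that $A$ is exact. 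For the cycles, fix an $R$-module $M$ of type $FP_\infty$ and a short exact sequence $0\to N\to P\to M\to 0$ with $P$ finitely generated projective. Applying $S^n(-)$ and then $\Hom_{\ch}(-,A)$, and using that for any $R$-module $T$ one has $\Hom_{\ch}(S^n(T),A)\cong\Hom_R(T,Z_nA)$, one finds that $\cok\big(\Hom_R(P,Z_nA)\to\Hom_R(N,Z_nA)\big)$ embeds into $\Ext^1_{\ch}(S^n(M),A)$, the displayed map being precomposition with $N\hookrightarrow P$. But that same cokernel is $\Ext^1_R(M,Z_nA)$, by the long exact sequence for $\Hom_R(-,Z_nA)$ applied to $0\to N\to P\to M\to 0$ (using $\Ext^1_R(P,Z_nA)=0$), whereas $\Ext^1_{\ch}(S^n(M),A)=0$ because $S^n(M)$ is of type $FP_\infty$. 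Hence $\Ext^1_R(M,Z_nA)=0$ for every $n$ and every $M$ of type $FP_\infty$, i.e. each $Z_nA$ is absolutely clean.

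For the converse, let $A$ be exact with each $Z_nA$ absolutely clean. The short exact sequences $0\to Z_nA\to A_n\to Z_{n-1}A\to 0$ together with closure of absolutely clean modules under extensions show that each component $A_n$ is absolutely clean. Now let $X$ be of type $FP_\infty$; by the structural description $X$ is bounded with each $X_n$ of type $FP_\infty$, so $\Ext^1_R(X_n,A_n)=0$ for all $n$, which forces every short exact sequence $0\to A\to B\to X\to 0$ in $\ch$ to be degreewise split, i.e. $\Ext^1_{\ch}(X,A)=\Ext^1_{dw}(X,A)$. By Lemma~\ref{lemma-homcomplex-basic-lemma} it then suffices to show that $\homcomplex(X,A)$ is exact, equivalently that every chain map $f\colon\Sigma^m X\to A$ is null-homotopic. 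Since $\Sigma^m X$ is bounded below I construct the contracting homotopy $s_k\colon(\Sigma^m X)_k\to A_{k+1}$ by upward induction from the lowest nonzero degree: at each step the map that must be lifted factors through $Z_kA=\im(d^A_{k+1})$ — here exactness of $A$ is used — and the obstruction to lifting it along the surjection $A_{k+1}\twoheadrightarrow Z_kA$ lies in $\Ext^1_R\big((\Sigma^m X)_k,Z_{k+1}A\big)$, which vanishes since $(\Sigma^m X)_k$ is of type $FP_\infty$ and $Z_{k+1}A$ is absolutely clean. Thus $\homcomplex(X,A)$ is exact, $\Ext^1_{dw}(X,A)=0$, and so $\Ext^1_{\ch}(X,A)=0$: $A$ is absolutely clean.

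Finally, the level statement follows from the absolutely clean one by duality. Let $L^{+}$ be the character complex of $L$, with $(L^{+})_n=(L_{-n})^{+}$. Since $\Q$ is injective, $(-)^{+}$ is exact and faithful, so $L$ is exact if and only if $L^{+}$ is exact, and when $L$ is exact a short diagram chase gives $Z_n(L^{+})\cong(Z_{-n-1}L)^{+}$. Using that a module $N$ is level if and only if $N^{+}$ is absolutely clean — and the corresponding statement that $L$ is level if and only if $L^{+}$ is absolutely clean — we chain the equivalences: $L$ is level $\iff$ $L^{+}$ is absolutely clean $\iff$ (by the case just proved, applied to $L^{+}$) $L^{+}$ is exact with every $Z_n(L^{+})$ absolutely clean $\iff$ $L$ is exact with every $(Z_mL)^{+}$ absolutely clean $\iff$ $L$ is exact with every $Z_mL$ level, which is the desired statement.
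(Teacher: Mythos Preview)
Your argument is correct. Note, however, that the paper does not itself prove this proposition: it simply refers to \cite[Prop.~2.6/4.6]{bravo-gillespie}. So there is nothing internal to compare against --- you have supplied a proof where the paper only cites one.

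One point deserves care. To deduce the level case you invoke the equivalence ``$L$ is level $\iff L^{+}$ is absolutely clean'' for \emph{complexes}. In the present paper that statement appears as Corollary~\ref{cor-duality}, and it is deduced \emph{from} the proposition you are proving; so within the paper's own logical flow your last paragraph would be circular. The fix is easy: the complex-level duality can be established directly from the adjunction isomorphism $(X \,\overline{\otimes}\, L)^{+} \cong \overline{\homcomplex}(X, L^{+})$ of~\cite[Prop.~4.2.1]{garcia-rozas} (compare the proof of Lemma~\ref{lemma-dual-exact}), which upon passing to derived functors gives $\overline{\Tor}_1(X,L)=0 \iff \overline{\mathit{Ext}}{}^{\,1}(X,L^{+})=0$ for $X$ of type $FP_{\infty}$, with no appeal to the cycle characterization. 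Alternatively, you can simply run the same style of argument for the level case directly --- testing against spheres $S^n(M)$ on right $R$-modules $M$ of type $FP_\infty$ via $\overline{\Tor}$ and building the analogous homotopy --- and avoid duality altogether. Either route removes the circularity; the rest of your proof (the absolutely clean case, and the identifications $Z_n(L^{+})\cong (Z_{-n-1}L)^{+}$ for $L$ exact) is clean and goes through as written.
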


Recall that the character module of $M$ is defined as $M^+ = \Hom_{\Z}
(M, \Q )$, and that $M^+$ is a right (resp. left) $R$-module whenever $M$ is a left (resp. right) $R$-module.  The construction extends to chain complexes: Given a chain complex $X$, we have $X^+ = \Hom_{\Z}(X,\Q)$. Since $\Q$ is an injective cogenerator for the category of abelian groups, the functor $\Hom_{\Z}(-,\Q)$ preserves and reflects exactness. So Proposition~\ref{prop-level chain complexes} immediately gives us the following corollary due to the perfect character module duality between absolutely clean and level modules~\cite[Theorem~2.10]{bravo-gillespie-hovey}.

\begin{corollary}\label{cor-duality}
A chain complex $L$ of left (resp. right) modules is level if and only if $L^+ = \Hom_{\Z}(L,\Q)$ is an absolutely clean complex of right (resp. left) modules. And, a chain complex $A$ of left (resp. right) modules is absolutely clean if and only if $A^+ = \Hom_{\Z}(A,\Q)$ is a level complex of right (resp. left) modules.
\end{corollary}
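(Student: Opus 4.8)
The plan is to deduce the corollary formally from Proposition~\ref{prop-level chain complexes} together with two inputs: that $\Hom_{\Z}(-,\Q)$ both preserves and reflects exactness (because $\Q$ is an injective cogenerator for abelian groups), and the module-level character duality of~\cite[Theorem~2.10]{bravo-gillespie-hovey}, which states that an $R$-module $M$ is level if and only if $M^+$ is absolutely clean, and that $M$ is absolutely clean if and only if $M^+$ is level. I will prove that $L$ is level if and only if $L^+$ is absolutely clean; the dual statement that $A$ is absolutely clean if and only if $A^+$ is level, as well as the right-module versions, follow by the identical argument with the two classes (and the direction of the module-level duality) interchanged, so no extra work is needed.

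By Proposition~\ref{prop-level chain complexes}, $L$ is level exactly when $L$ is exact and every cycle module $Z_nL$ is level, and $L^+$ is absolutely clean exactly when $L^+$ is exact and every cycle module $Z_n(L^+)$ is absolutely clean. Since $\Hom_{\Z}(-,\Q)$ preserves and reflects exactness, $L$ is exact if and only if $L^+$ is exact, so the two exactness conditions match. It therefore remains only to match the cycle conditions, and for this it suffices, when $L$ (equivalently $L^+$) is exact, to identify each cycle module of $L^+$ with the character module of a cycle module of $L$.

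This last identification is the only point requiring a computation, and it is routine. When $L$ is exact, each short exact sequence $0 \to Z_nL \to L_n \xrightarrow{d_n} Z_{n-1}L \to 0$ remains exact upon applying the exact contravariant functor $(-)^+$; unwinding the indexing convention for $L^+$ (with $(L^+)_n = \Hom_{\Z}(L_{-n},\Q)$, say) one gets a natural isomorphism $Z_n(L^+) \cong (Z_mL)^+$ for the appropriate degree $m$ --- explicitly, $Z_n(L^+) = \ker (d)^+ \cong (\cok d)^+ \cong (L_{-n}/B_{-n}L)^+ \cong (Z_{-n-1}L)^+$, the last step using exactness of $L$ to replace $B_{-n}L$ by $Z_{-n}L$. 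Combining these, $L$ is level $\iff$ $L$ is exact and every $Z_mL$ is level $\iff$ $L^+$ is exact and every $(Z_mL)^+$ is absolutely clean (by the module-level duality) $\iff$ $L^+$ is exact and every $Z_n(L^+)$ is absolutely clean $\iff$ $L^+$ is absolutely clean. The main (and only) obstacle is pinning down this degreewise bookkeeping so that the module-level duality can be applied cycle by cycle; once that is in hand, the corollary is a formal chain of equivalences.
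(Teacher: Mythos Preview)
Your proof is correct and follows precisely the same approach as the paper: the paper simply notes that since $\Hom_{\Z}(-,\Q)$ preserves and reflects exactness, Proposition~\ref{prop-level chain complexes} together with the module-level duality of~\cite[Theorem~2.10]{bravo-gillespie-hovey} immediately gives the corollary. The only difference is that you have written out the degreewise identification $Z_n(L^+)\cong (Z_mL)^+$ explicitly, which the paper leaves implicit.
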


\subsection{Cotorsion pairs} Let $\cat{A}$ be an abelian category.  By definition, a pair of classes $(\class{X},\class{Y})$ in $\cat{A}$ is called a \emph{cotorsion pair} if $\class{Y} = \rightperp{\class{X}}$ and $\class{X} = \leftperp{\class{Y}}$. Here, given a class of objects $\class{C}$ in $\cat{A}$, the right orthogonal  $\rightperp{\class{C}}$ is defined to be the class of all objects $X$ such that $\Ext^1_{\cat{A}}(C,X) = 0$ for all $C \in \class{C}$. Similarly, we define the left orthogonal $\leftperp{\class{C}}$. We call the cotorsion pair \emph{hereditary} if $\Ext^i_{\cat{A}}(X,Y) = 0$ for all $X \in \class{X}$, $Y \in \class{Y}$, and $i \geq 1$. The cotorsion pair is \emph{complete} if it has enough injectives and enough projectives. This means that for each $A \in \cat{A}$ there exist short exact sequences $0 \xrightarrow{} A \xrightarrow{} Y \xrightarrow{} X \xrightarrow{} 0$ and $0 \xrightarrow{} Y' \xrightarrow{} X' \xrightarrow{} A \xrightarrow{} 0$ with $X,X' \in \class{X}$ and $Y,Y' \in \class{Y}$.
Standard references include~\cite{enochs-jenda-book} and~\cite{trlifaj-book} and connections to abelian model categories can be found in~\cite{hovey} and~\cite{gillespie-hovey triples}.

Recall that a \emph{Grothendieck category} is a cocomplete abelian category with a set of generators and such that direct limits are exact. By an \emph{injective cotorsion pair} $(\class{W},\class{F})$ in a Grothendieck category $\cat{G}$ we mean a complete cotorsion pair for which $\class{W}$ is thick and $\class{W} \cap \class{F}$ is the class of injective objects.  Since Grothendieck categories have enough injectives it turns out that such a cotorsion pair is equivalent to an \emph{injective model structure} on $\cat{G}$. By this we mean the model structure is abelian in the sense of~\cite{hovey} and all objects are cofibrant. The fibrant objects in this case are exactly $\class{F}$ and the trivial objects are exactly $\class{W}$. See~\cite{gillespie-recollement} for more on injective cotorsion pairs. We also have the dual notion of \emph{projective cotorsion pairs} $(\class{C},\class{W})$ which give us \emph{projective model structures} on abelian categories with enough projectives.

\section{Injective cotorsion pairs and direct limits}\label{sec-direct limits}

Let $(\class{W},\class{F})$ be a cotorsion pair in $R$-Mod, or $\ch$, or some other Grothendieck category. Assume that $\class{W}$ is thick; that is, assume $\class{W}$ satisfies the two out of three property on short exact sequences. In this section we show that $\class{W}$ must be closed under direct limits. This leads to several interesting corollaries. The proof of the proposition below is adapted from Stovicek's~\cite[Prop.~5.3]{stovicek-purity}.  After studying the proof, the author simply realized that it can be reinterpreted to yield the following convenient result.

\begin{proposition}\label{prop-direct limits}
Let $(\class{W},\class{F})$ be a cotorsion pair in $R$-Mod with $\class{W}$ thick. Then $\class{W}$ is closed under direct limits.
\end{proposition}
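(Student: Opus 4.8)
The plan is to follow Stov\'i\v cek's strategy for showing that certain classes are closed under direct limits, recast purely in terms of the cotorsion pair hypothesis. Let $\{M_i\}_{i \in I}$ be a direct system in $\class{W}$ indexed by a directed poset $I$, and set $M = \colim_i M_i$. The standard tool is the canonical presentation of a directed colimit: there is a short exact sequence
\[
0 \xrightarrow{} K \xrightarrow{} \bigoplus_{i \in I} M_i \xrightarrow{} M \xrightarrow{} 0,
\]
and moreover $K$ itself can be written as a directed union (hence a directed colimit) of the finite ``partial sums'' $\bigoplus_{i \in J} M_i$ intersected appropriately with $K$; more precisely, filtering $I$ by its finite directed subsets, $K$ is a transfinite extension (a continuous filtered colimit) of subobjects each of which is isomorphic to a kernel $\bigoplus_{i \in J} M_i \to \colim_{i \in J} M_i$ for $J$ finite, which in turn sits in a short exact sequence with two terms that are finite direct sums of the $M_i$, hence in $\class{W}$ since $\class{W}$ is closed under finite direct sums and extensions. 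So each such kernel lies in $\class{W}$ by the two-out-of-three property (it is the kernel of a surjection between objects of $\class{W}$, and $\class{W}$ is thick), and then $K \in \class{W}$ because $\class{W}$, being the left-hand class of a cotorsion pair, is closed under transfinite extensions (Eklof's lemma). Finally, $\bigoplus_i M_i \in \class{W}$ and $K \in \class{W}$ force $M \in \class{W}$, again by two-out-of-three.

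The key steps, in order, are: (1) reduce to showing $K$, the kernel of the canonical surjection $\bigoplus M_i \twoheadrightarrow M$, lies in $\class{W}$; (2) exhibit $K$ as a continuous transfinite filtration whose successive quotients are kernels of the maps $\bigoplus_{i \in J} M_i \to \colim_{i \in J} M_i$ over finite subsystems $J$ — this is the ``telescope'' bookkeeping and is where one must be careful that the filtration is continuous at limit ordinals and that the consecutive quotients are exactly as claimed; (3) observe each such finite-stage kernel is in $\class{W}$ via closure of $\class{W}$ under finite sums and the two-out-of-three property; (4) invoke Eklof's lemma — valid because $\class{W} = \leftperp{\class{F}}$ is an Ext-orthogonal class and such classes are closed under transfinite extensions — to conclude $K \in \class{W}$; (5) conclude $M \in \class{W}$.

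I expect the main obstacle to be step (2): organizing the directed system of finite subsets of $I$ into a well-ordered continuous chain so that $K$ genuinely becomes a $\class{W}$-filtered object. One has to pass from a filtration indexed by the (generally non-linearly-ordered) poset of finite subsets to an honest ordinal-indexed continuous chain, check that unions at limit stages behave correctly, and identify each successive subquotient with a short exact sequence $0 \to \ker \to \bigoplus_{i\in J}M_i \to \colim_{i\in J}M_i \to 0$ where both outer terms are manifestly in $\class{W}$. This is precisely the combinatorial heart of Stov\'i\v cek's argument, and the novelty here is only that once it is in place, the sole inputs needed are that $\class{W}$ is closed under finite coproducts, is thick, and is the left class of a cotorsion pair (so Eklof applies) — nothing about the specific ambient category is used, which is why the statement holds equally in $\ch$ or any Grothendieck category.
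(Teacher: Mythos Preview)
Your plan is essentially the paper's proof (which is itself an abstraction of Stov\'i\v{c}ek's argument), and the steps you single out—the canonical presentation $0\to K\to\bigoplus M_i\to M\to 0$, writing $K$ as a union of the finite-stage pieces, and invoking Eklof on the left class of a cotorsion pair—are exactly the ones the paper uses.

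One point to tighten in step~(2): the successive quotients of the filtration of $K$ are \emph{not} themselves the kernels $K_J=\ker\bigl(\bigoplus_{i\in J}M_i\to\colim_{i\in J}M_i\bigr)$; they are the quotients $K_{J'}/K_J$ for $J\subset J'$. These still lie in $\class{W}$, but by thickness applied to $K_J,K_{J'}\in\class{W}$, not by the argument you gave in step~(3). The paper handles this by isolating that observation as a preliminary ``Step~1'' (closure of $\class{W}$ under direct unions: successive quotients of a monomorphic $\class{W}$-system lie in $\class{W}$ by thickness, then Eklof), and only afterward showing that $K$ is a direct union of the $K_J$'s—each of which it identifies explicitly with $\bigoplus_{i\in J\setminus\{\max J\}}M_i$ via a split monomorphism, so that $K_J\in\class{W}$ directly. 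The paper also first reduces to well-ordered continuous systems (Ad\'amek--Rosick\'y), which makes every finite subset automatically directed with a maximum and simplifies the bookkeeping you flagged as the main obstacle.
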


\begin{proof}
As a first step we show that $\class{W}$ is closed under direct unions. It is in fact enough to prove this for well-ordered \emph{continuous} direct unions. So assume that we have a $\lambda$-sequence of module monomorphisms
$$X_0 \hookrightarrow{} X_1 \hookrightarrow{} X_2 \hookrightarrow{} \cdots \hookrightarrow{} X_{i} \hookrightarrow{} X_{i+1} \hookrightarrow{} \cdots$$
with $X_i  \in \class{W}$ for each $i < \lambda$. Then each $X_{i+1}/X_i \in \class{W}$ by the thickness hypothesis. Since the diagram of modules is assumed continuous we see that its colimit is the same thing as the transfinite extension of the $X_0, X_{i+1}/X_i$. Thus its colimit is in $\class{W}$ by Eklof's lemma (assuming $\lambda$ is a limit ordinal; if it is a successor ordinal $\lambda = \alpha +1$, then the colimit coincides with $X_{\alpha} \in \class{W}$).

Step two: We show that $\class{W}$ is closed under direct limits.  Again, it is enough to prove this for well-ordered \emph{continuous} direct limits. (See~\cite[Section~1.6]{adamek-rosicky}, especially Corollary~1.7 and the Remark that follows it where well-ordered direct limits are referred to as \emph{chains} and continuous well-ordered direct limits as \emph{smooth chains}.) So we consider a $\lambda$-diagram $$X_0 \xrightarrow{f_{0,1}} X_1 \xrightarrow{f_{1,2}} X_2 \xrightarrow{f_{2,3}} \cdots X_{i} \xrightarrow{f_{i,i+1}} X_{i+1} \xrightarrow{} \cdots$$ with each $X_i \in \class{W}$ and $X_{\gamma} = \varinjlim_{i < \gamma} \{X_i, f_{i,i+1}\}$ for each limit ordinal $\gamma < \lambda$. Our job is to show that
$\varinjlim_{i < \lambda} X_i \in \class{W}$. We will assume $\lambda$ is a limit ordinal, for otherwise the direct limit just equals $X_{\lambda - 1} \in \class{W}$. Following a standard way for defining direct limits (for example, see~\cite[Prop~IV.8.4, p.100]{stenstrom}), $\varinjlim_{i < \lambda} X_i $ is the cokernel of the following homomorphism: $\bigoplus_{i<j}X_{ij} \xrightarrow{} \bigoplus_{i<\lambda} X_i$ where the first direct sum is taken over all pairs $i < j < \lambda$, and $X_{ij} = X_i$ is just a copy of the domain of the map $X_{i} \xrightarrow{f_{ij}} X_j$, and the map is defined on the $i$th coordinate by $x_i \mapsto e_ix_i - e_jf_{ij}x_i$ where the $e_i$ denotes the canonical injection into the coproduct. In other words, the direct limit is $(\bigoplus_{i<\lambda} X_i)/K$ where $K$ is the image of this map: $$K = \ <e_ix_i - e_jf_{ij}x_i \ |\ \text{$x_i \in X_i$ and $i<j<\lambda$} >$$
Note that since the maps $e_i$ and $f_{ij}$ are linear we have that $K$ is the set of all finite sums of the form $e_ix_i - e_jf_{ij}x_i$ where the $x_i$ range through $X_i$ and $i<j$ ranges through all $i<j<\lambda$. There is a short exact sequence $$0 \xrightarrow{} K \xrightarrow{\subseteq} \bigoplus_{i < \lambda} X_i \xrightarrow{} \varinjlim_{i < \lambda} X_i \xrightarrow{} 0 , $$ and since $\class{W}$ is closed under direct sums and is thick, it is enough to show that $K \in \class{W}$. We now show that $K$ is a direct union (not well ordered, but still a direct union) of modules in $\class{W}$. So the proof will follow from the above Step~1.

Thinking of $\lambda$ as the set of all its smaller ordinals we define, for each finite subset $J \subseteq \lambda$ with $|J| >1$, the mapping
$$\phi_J : \bigoplus_{i\in J\backslash{\{j\}}} X_i \xrightarrow{} \bigoplus_{i<\lambda} X_i$$
where $j$ denotes the maximum element of the finite subset $J$, and the map is defined on the $i$th coordinate via $x_i \mapsto e_ix_i - e_jf_{ij}x_i$. One now verifies each of the following:
\begin{enumerate}
\item $\class{S} = \{J \subseteq \lambda \ | \ 1< |J| < \omega\}$ is a directed poset and there is a functor $D : \class{S} \xrightarrow{} R\text{-Mod}$ defined on objects by $J \mapsto
\bigoplus_{i\in J\backslash{\{j\}}} X_i$, and on arrows by taking an inclusion $J \subseteq J'$ to the map $D_{JJ'}$ defined on the $i$th coordinate as follows: $x_i \mapsto e_ix_i$ if $j = j'$ (that is, just a natural inclusion if $J$ and $J'$ have the same maximal element), but via $x_i \mapsto e_ix_i - e_jf_{ij}x_i$ if $j<j'$.

\item Each $D_{JJ'}$ is a monomorphism. In fact it is a split monomorphism with retraction map the canonical projection.

\item Each $\phi_J$ is also a split monomorphism with similar retraction. The image of $\phi_J$ identifies $\bigoplus_{i\in J\backslash{\{j\}}} X_i$ with the submodule
 $$K_J = \ <e_ix_i - e_jf_{ij}x_i \ |\ \text{$x_i \in X_i$ and $i \in J\backslash \{j\}$} >.$$ That is $K_J$ is the set of all finite sums of elements of the form $e_ix_i - e_jf_{ij}x_i$ as $i$ ranges through $J \backslash \{j\}$ and $x_i$ ranges through $X_i$.

 \item The direct system of monomorphisms $D : \class{S} \xrightarrow{} R\text{-Mod}$ is isomorphic via the natural transformation $\{\phi_J\}$ to the direct system of submodules $K_J$. The direct limit $\varinjlim_{i < \lambda} D$ identifies with the direct union of submodules $\bigcup_{J \in \class{S}} K_J \subseteq
\bigoplus_{i < \lambda} X_i$.

\item $K = \bigcup_{J \in \class{S}} K_J$.
\end{enumerate}

Now since each $\bigoplus_{i\in J\backslash{\{j\}}} X_i \in \class{W}$, we conclude from (3), (4), (5) and Step~1 that $K \in \class{W}$. This completes the proof.
\end{proof}

Of course generalizations of Proposition~\ref{prop-direct limits} to categories beyond $R$-Mod are possible and will be important for applications.  The reader will note that the above proof holds in any Grothendieck category, giving the following generalization.

\begin{proposition}\label{prop-Grothendieck direct limits}
Let $(\class{W},\class{F})$ be a cotorsion pair in a Grothendieck category $\cat{G}$ with $\class{W}$ thick. Then $\class{W}$ is closed under direct limits. In particular, the class $\class{W}$ of trivial objects is closed under direct limits whenever $(\class{W},\class{F})$ be an injective cotorsion pair.
\end{proposition}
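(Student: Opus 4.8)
The plan is to observe that the proof of Proposition~\ref{prop-direct limits} used nothing about $R$-Mod beyond a short list of formal properties possessed by every Grothendieck category, and then to check that list. First I would note that the two reductions in that proof --- that it suffices to treat well-ordered \emph{continuous} direct unions in Step~1, and well-ordered \emph{continuous} direct limits (smooth chains) in Step~2 --- are purely categorical and hold in any cocomplete category, exactly as cited from~\cite{adamek-rosicky}. Next, Step~1 itself uses only the thickness of $\class{W}$ together with Eklof's lemma, i.e. the closure of the left $\Ext^1$-orthogonal class $\class{W} = \leftperp{\class{F}}$ under continuous transfinite extensions; this is standard and holds verbatim in any Grothendieck category, so Step~1 carries over and yields closure of $\class{W}$ under arbitrary direct unions of subobjects.

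For Step~2 the additional inputs are: (i) $\class{W} = \leftperp{\class{F}}$ is closed under arbitrary coproducts, which is automatic from $\Ext^1_{\cat{G}}(\bigoplus_i W_i, F) \cong \prod_i \Ext^1_{\cat{G}}(W_i, F)$; (ii) the presentation of a direct limit as the cokernel of a map $\bigoplus_{i<j} X_{ij} \to \bigoplus_{i<\lambda} X_i$ between coproducts, valid in any cocomplete abelian category (the cited~\cite{stenstrom} states it at that generality); and (iii) the verifications (1)--(5), which manipulate only split monomorphisms and their canonical retractions inside the additive category $\cat{G}$. The sole subtlety is that (3)--(5) describe $K$ and the subobjects $K_J$ using elements and ``finite sums''; in a general Grothendieck category I would instead \emph{define} $K$ to be the image of the structure map $\bigoplus_{i<j} X_{ij} \to \bigoplus_{i<\lambda} X_i$ and $K_J$ to be the image of $\phi_J$, so that (3) becomes the statement that $\phi_J$ is a split monomorphism onto $K_J$ and (4)--(5) become the statement that $K = \varinjlim_{J \in \class{S}} K_J$ as a directed union of subobjects of $\bigoplus_{i<\lambda} X_i$. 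Checking this identification is the one place where genuine care is needed: it uses the exactness of directed colimits in $\cat{G}$ (so that images commute with directed colimits and $\varinjlim_J K_J$ is really a subobject of $\bigoplus_{i<\lambda} X_i$), together with the bookkeeping that the maps $D_{JJ'}$ organize $D : \class{S} \to \cat{G}$ into the system whose colimit realizes $K$. Granting it, each $\bigoplus_{i \in J \setminus \{j\}} X_i$ lies in $\class{W}$ by (i), so $K \in \class{W}$ by Step~1, and then $\varinjlim_{i<\lambda} X_i \in \class{W}$ by thickness applied to the short exact sequence $0 \to K \to \bigoplus_{i<\lambda} X_i \to \varinjlim_{i<\lambda} X_i \to 0$.

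Finally, the ``in particular'' clause needs nothing new: an injective cotorsion pair $(\class{W}, \class{F})$ has $\class{W}$ thick by definition, so the first assertion applies directly to its class of trivial objects. The hard part of the whole argument is thus organizational rather than conceptual --- transcribing the element-level descriptions of $K$ and the $K_J$ into image/subobject language and confirming that the defining exactness of directed colimits in a Grothendieck category legitimizes the directed-union step; once that is done, no idea beyond the proof of Proposition~\ref{prop-direct limits} is required.
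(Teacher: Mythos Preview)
Your proposal is correct and is exactly the approach the paper takes: the paper's entire proof of Proposition~\ref{prop-Grothendieck direct limits} is the one-sentence remark that ``the reader will note that the above proof holds in any Grothendieck category,'' and you have simply carried out that reader's verification in detail. If anything, your treatment of the element-free reformulation of $K$ and the $K_J$ as images, and your explicit invocation of exactness of directed colimits to justify the directed-union identification, goes beyond what the paper spells out.
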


Proposition~\ref{prop-Grothendieck direct limits} has some interesting consequences that apply to each of the injective cotorsion pairs recently appearing in~\cite{bravo-gillespie-hovey, gillespie-recollement}. For example, it is shown in~\cite{bravo-gillespie-hovey} that for any ring $R$ we have an injective cotorsion pair $(\class{W},\class{GI})$ in $R$-Mod where $\class{GI}$ is the class of Gorenstein AC-injective modules. The next corollary says that all modules of finite flat dimension are sent to zero in the corresponding stable homotopy category. Consequently, any Gorenstein AC-injective module is cotorsion and must be injective if it has finite flat dimension.

\begin{corollary}\label{cor-fin-flat-dim}
Let $(\class{W},\class{F})$ be an injective cotorsion pair in $R$-Mod or $\ch$. Then $\class{W}$ contains all objects of finite flat dimension. Consequently, any fibrant object is cotorsion and must be injective whenever it is of finite flat dimension.
\end{corollary}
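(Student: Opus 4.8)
\textbf{Proof proposal for Corollary~\ref{cor-fin-flat-dim}.}

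The plan is to reduce the statement to an application of Proposition~\ref{prop-Grothendieck direct limits}, the key point being that modules (or complexes) of finite flat dimension are built from flat modules by extensions, and flat modules are filtered colimits of finitely generated projectives. First I would observe that since $(\class{W},\class{F})$ is an injective cotorsion pair, $\class{W}$ is thick and $\class{W} \cap \class{F}$ is the class of injectives; in particular every injective object lies in $\class{W}$, and since $\class{W}$ is thick it is closed under extensions and under direct summands. Next I would show that every projective object lies in $\class{W}$: a finitely generated projective is a summand of a finitely generated free, so it suffices to handle free objects, and a free object is a direct sum of copies of $R$ (or, in $\ch$, of disks $D^n(R)$, which are projective generators). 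Here I would invoke Proposition~\ref{prop-Grothendieck direct limits}: an arbitrary direct sum is a direct limit of its finite sub-sums, so once $R$ itself (resp. each disk) is shown to be in $\class{W}$, all free and hence all projective objects are in $\class{W}$. To see $R \in \class{W}$: $R$ is a summand of... actually more directly, embed $R$ into an injective $E \in \class{W}$; this does not immediately help. Instead I would argue that $\class{W}$ contains $R$ because the cotorsion pair is, in the relevant cases, such that $\class{W}$ contains all projectives — but to keep the argument self-contained I would note that a projective object $P$ fits in $0 \to P \to E \to E/P \to 0$ with $E$ injective; this shows $P \in \class{W}$ would follow from $E/P \in \class{W}$, which is circular. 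The clean route is: flat modules are filtered colimits of finitely generated projectives (Lazard), so it suffices to show finitely generated projectives lie in $\class{W}$, and these are summands of $R^n$; and $R^n \in \class{W}$ follows from $R \in \class{W}$ by closure under finite direct sums (extensions). So the genuinely needed input is $R \in \class{W}$, equivalently that the unit object is trivial.

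For the case of $R$-Mod this is standard: in an injective cotorsion pair arising from the constructions of~\cite{bravo-gillespie-hovey}, $\class{W}$ contains the projectives, and more simply, $R \in \leftperp{\class{F}}$ would say $\Ext^1(R,F) = 0$ for all $F \in \class{F}$, which is automatic since $R$ is projective. That is the point: \emph{any projective object is automatically in $\leftperp{(\text{anything})} = \class{W}$}, because $\Ext^1(P,-)=0$ for $P$ projective. So $R$ and all projectives lie in $\class{W}$ with no work at all. Then by Lazard's theorem every flat module is a filtered (direct) colimit of finitely generated projectives, all of which are in $\class{W}$, so by Proposition~\ref{prop-Grothendieck direct limits} every flat module is in $\class{W}$. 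For $\ch$ the same reasoning applies: projective complexes are in $\class{W}$ trivially, and a flat complex (in the categorical sense, using $\overline{\otimes}$) is a filtered colimit of finitely generated projective complexes, so flat complexes lie in $\class{W}$ as well.

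Now I would finish by induction on flat dimension. If $M$ has flat dimension $n$, choose a short exact sequence $0 \to K \to F \to M \to 0$ with $F$ flat; then $K$ has flat dimension $\le n-1$, so $K \in \class{W}$ by induction, $F \in \class{W}$ by the previous paragraph, and thickness of $\class{W}$ (two-out-of-three on short exact sequences) gives $M \in \class{W}$. This proves the first assertion. For the consequence: $\class{W}$ is exactly the class of trivial objects and $\class{F}$ the class of fibrant objects of the associated injective model structure, with $\class{F} = \rightperp{\class{W}}$. Any fibrant object $F \in \class{F}$ satisfies $\Ext^1(W,F)=0$ for all $W \in \class{W}$; since $\class{W}$ contains all flat modules (in particular, over any ring, all of $R$-Mod's flats), $\Ext^1(\text{flat},F)=0$, i.e.\ $F$ is cotorsion. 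And if $F$ additionally has finite flat dimension, then $F \in \class{W} \cap \class{F}$, which is the class of injective objects, so $F$ is injective.

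\textbf{Main obstacle.} The only subtle point is making sure the colimit argument really applies: Proposition~\ref{prop-Grothendieck direct limits} gives closure of $\class{W}$ under \emph{all} direct limits in a Grothendieck category, and both $R$-Mod and $\ch$ are Grothendieck, so invoking Lazard's theorem (flats are filtered colimits of finitely generated projectives — and its complex analogue, which one can extract from the type-$FP_\infty$ machinery of~\cite{bravo-gillespie} or directly) is legitimate. If one wanted to avoid Lazard in the complex case, one could instead note that a flat complex is a complex of flat modules that stays exact after tensoring appropriately, and filter it by bounded subcomplexes — but the colimit route via Proposition~\ref{prop-Grothendieck direct limits} is cleanest. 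No step presents a real difficulty; the content is entirely in Proposition~\ref{prop-direct limits}/\ref{prop-Grothendieck direct limits}, which is assumed.
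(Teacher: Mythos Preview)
Your proposal is correct and, after the initial detours, follows exactly the paper's argument: projectives lie in $\class{W} = \leftperp{\class{F}}$ trivially, flats are direct limits of projectives (Lazard for modules, \cite[Theorem~4.1.3]{garcia-rozas} for complexes) and hence lie in $\class{W}$ by Proposition~\ref{prop-Grothendieck direct limits}, thickness then handles finite flat dimension, and the consequences follow from $\class{W} \cap \class{F}$ being the injectives. The early wandering about showing $R \in \class{W}$ via injective embeddings is unnecessary --- you correctly identify this yourself --- since $\Ext^1(P,-) = 0$ for any projective $P$ immediately places it in $\class{W}$.
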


\begin{proof}
Certainly $\class{W}$ contains all projectives and hence all flat objects since these are precisely direct limits of projectives. Since $\class{W}$ is thick and contains the flat objects it also contains all objects of finite flat dimension. The final statement follows since $\class{W} \cap \class{F}$ coincides with the class of injective objects.
\end{proof}

Recall that a chain complex is \emph{flat} if it is exact and each cycle module is a flat $R$-module. These complexes are categorically flat in $\ch$; they are direct limits of finitely generated projective complexes~\cite[Theorem~4.1.3]{garcia-rozas}. A complex $X$ is called \emph{DG-flat} if each $X_n$ is flat and $E \tensor X$ is exact whenever $E$ is an exact chain complex of (right) $R$-modules. $E \tensor X$ is the usual tensor product of chain complexes recalled in Section~\ref{subsec-complexes}. Again, the book~\cite{garcia-rozas} is a standard reference for DG-flat complexes.

\begin{corollary}\label{cor-flats}
Let $(\class{W},\class{F})$ be an injective cotorsion pair in $\ch$. Then each of the following hold:
\begin{enumerate}
\item Any exact complex $Y \in \class{F}$ has all cotorsion cycle modules. That is, each $Z_nY$, and hence each $Y_n$, is a cotorsion module.
\item If every complex in $\class{F}$ is exact, then
each DG-flat complex is in $\class{W}$.
\end{enumerate}
\end{corollary}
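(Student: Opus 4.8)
The plan is to deduce both parts formally from the cotorsion pairs on $\ch$ attached to the $(\text{flat},\text{cotorsion})$ cotorsion pair on $R$-modules, as developed in~\cite{garcia-rozas}. Write $\class{E}$ for the class of exact complexes, $\tilclass{F}$ for the flat complexes (exact, with flat cycle modules), $\dgclass{F}$ for the DG-flat complexes, $\tilclass{C}$ for the class of exact complexes all of whose cycle modules are cotorsion, and $\dgclass{C}$ for the DG-cotorsion complexes. The facts I will invoke are: $(\tilclass{F},\dgclass{C})$ and $(\dgclass{F},\tilclass{C})$ are cotorsion pairs in $\ch$; $\dgclass{C}\cap\class{E} = \tilclass{C}$, i.e.\ an exact DG-cotorsion complex has all cycle modules cotorsion; and the DG-flat complexes as defined in Section~\ref{sec-prelim} are precisely $\leftperp{\tilclass{C}} = \dgclass{F}$.

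For part (1): by Corollary~\ref{cor-fin-flat-dim}, $\class{W}$ contains every complex of finite flat dimension, hence in particular $\tilclass{F}\subseteq\class{W}$. Since $(\class{W},\class{F})$ is a cotorsion pair we have $\class{F} = \rightperp{\class{W}}$, and applying $\rightperp{(-)}$ to $\tilclass{F}\subseteq\class{W}$ gives $\class{F}\subseteq\rightperp{\tilclass{F}} = \dgclass{C}$. Therefore an exact complex $Y\in\class{F}$ lies in $\dgclass{C}\cap\class{E} = \tilclass{C}$, so each cycle $Z_nY$ is cotorsion. The statement about the components $Y_n$ then follows from the short exact sequences $0\to Z_nY\to Y_n\to Z_{n-1}Y\to 0$, using that the class of cotorsion modules, being a right $\Ext^1$-orthogonal class, is closed under extensions.

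For part (2): assume every complex in $\class{F}$ is exact. Then part (1) applies to each $Y\in\class{F}$ and yields $\class{F}\subseteq\tilclass{C}$. Applying $\leftperp{(-)}$ and using $\class{W} = \leftperp{\class{F}}$ together with $\dgclass{F} = \leftperp{\tilclass{C}}$, we obtain $\dgclass{F} = \leftperp{\tilclass{C}} \subseteq \leftperp{\class{F}} = \class{W}$; that is, every DG-flat complex lies in $\class{W}$.

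The one ingredient that is not pure orthogonality formalism is the identity $\dgclass{C}\cap\class{E} = \tilclass{C}$ used in part (1) --- equivalently, that an exact complex which is right $\Ext^1_{\ch}$-orthogonal to every flat complex already has cotorsion cycle modules. This is the substantive point (it is exactly the content underlying the flat model structure on $\ch$), and I would cite it rather than reprove it; note that it genuinely uses orthogonality against \emph{all} flat complexes, since orthogonality against disks $D^n(F)$ with $F$ flat only delivers that the components $Y_n$ are cotorsion, and $\Ext^1_{\ch}(-,Y)$ vanishes automatically on bounded-below complexes of projectives whenever $Y$ is exact.
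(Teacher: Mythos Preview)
Your argument for part~(1) is essentially the paper's: both use Corollary~\ref{cor-fin-flat-dim} to land $\class{F}$ inside $\dgclass{C}$, then invoke the identity $\dgclass{C}\cap\class{E}=\tilclass{C}$, which the paper cites as \cite[Theorem~3.12]{gillespie}.

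For part~(2) you take a genuinely different route. You bootstrap from part~(1) to obtain $\class{F}\subseteq\tilclass{C}$ and then apply the cotorsion pair $(\dgclass{F},\tilclass{C})$ to get $\dgclass{F}=\leftperp{\tilclass{C}}\subseteq\leftperp{\class{F}}=\class{W}$. The paper does not use part~(1) for part~(2) at all; instead it shows directly that $S^n(P)\in\class{W}$ for every projective module $P$ via the isomorphism $\Ext^1_{\ch}(S^n(P),Y)\cong\Ext^1_R(P,Z_nY)$ for exact $Y$, passes to $S^n(F)$ for flat $F$ using the direct-limit closure of $\class{W}$ established in Proposition~\ref{prop-Grothendieck direct limits}, and then cites \cite[Prop.~3.8]{gillespie-quasi-coherent} to realize every DG-flat complex as a retract of a transfinite extension of such spheres, concluding by Eklof's lemma. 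Your route is cleaner orthogonality formalism but leans on the full flat model structure on $\ch$ (specifically that $(\dgclass{F},\tilclass{C})$ is a cotorsion pair, which is the nontrivial half); the paper's argument trades that for the filtration description of DG-flat complexes and avoids citing the second cotorsion pair altogether. Both are correct and of comparable depth, since the cited ingredients ultimately come from the same circle of results.
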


Here we will use the notation from~\cite{gillespie} where $\dgclass{C}$ denotes the class of all complexes $C$ with each $C_n$ cotorsion and with the property that all chain maps $F \xrightarrow{} C$ are null homotopic whenever $F$ is a flat chain complex. It turns out that $\dgclass{C}$ is precisely the right Ext orthogonal to the class of flat complexes. That is,   $\dgclass{C}$ is really the class of cotorsion objects in the category of complexes.

\begin{proof}
For (1), we have that $Y$ is cotorsion by Corollary~\ref{cor-fin-flat-dim}. That is, $Y \in \dgclass{C}$. The result now follows from~\cite[Theorem~3.12]{gillespie} which tells us that the exact complexes in $\dgclass{C}$ are precisely the ones with cotorsion cycle modules.

 For (2), in the case that every complex $Y \in \class{F}$ is exact we have $$\Ext^1_{\ch}(S^n(P),Y) \cong \Ext^1_R(P,Z_nY) = 0$$ for each projective module $P$. So $S^n(P) \in \class{W}$ for each projective $P$, and hence $S^n(F) \in \class{W}$ for each flat module $F$. But it follows from~\cite[Prop.~3.8]{gillespie-quasi-coherent} that each DG-flat complex is a retract of a transfinite extension of complexes of the form $S^n(F)$ with $F$ flat. Applying the well known Eklof's lemma, we conclude that all DG-flat complexes are in $\class{W}$, proving~(2).
\end{proof}

Going back to the Gorenstein AC-injective modules appearing before Corollary~\ref{cor-fin-flat-dim}, we get the following corollary.

\begin{corollary}\label{cor-cotorsions}
Any chain complex $X$ of Gorenstein AC-injective modules is in $\dgclass{C}$. Consequently, any exact complex of Gorenstein AC-injectives must have cotorsion cycle modules.
\end{corollary}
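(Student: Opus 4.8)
The plan is to exhibit the class of chain complexes with Gorenstein AC-injective components as the fibrant class of an injective cotorsion pair on $\ch$, and then read the corollary off from Corollaries~\ref{cor-fin-flat-dim} and~\ref{cor-flats}. Recall from~\cite{bravo-gillespie-hovey} that for any ring $R$ there is an injective cotorsion pair $(\class{W},\class{GI})$ in $R$-Mod, with $\class{GI}$ the class of Gorenstein AC-injective modules. As the discussion after Proposition~\ref{prop-Grothendieck direct limits} indicates, such a module-level injective cotorsion pair lifts degreewise to $\ch$: there is an injective cotorsion pair $(\class{V},\dwclass{GI})$ in $\ch$ whose fibrant class $\dwclass{GI}$ is precisely the class of chain complexes $X$ with every component $X_n$ Gorenstein AC-injective (see~\cite{bravo-gillespie-hovey, gillespie-recollement}).

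Granting that, the argument is formal. Corollary~\ref{cor-fin-flat-dim} is stated for injective cotorsion pairs in $R$-Mod \emph{or} $\ch$, hence applies to $(\class{V},\dwclass{GI})$ and tells us that every fibrant object is cotorsion. Since ``fibrant'' here means ``lies in $\dwclass{GI}$'', and since, as recalled just before Corollary~\ref{cor-flats}, the cotorsion objects of $\ch$ are exactly the complexes in $\dgclass{C}$, we get $\dwclass{GI}\subseteq\dgclass{C}$ --- this is the first assertion. For the ``consequently'': if $X\in\dwclass{GI}$ is in addition exact, then $X$ is an exact complex lying in $\dgclass{C}$, so each cycle $Z_nX$ is a cotorsion module by~\cite[Theorem~3.12]{gillespie}, the same input used in the proof of Corollary~\ref{cor-flats}(1); equivalently, Corollary~\ref{cor-flats}(1) applies verbatim to the exact complex $X$ in the fibrant class $\dwclass{GI}$.

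So essentially all of the content sits in the cited existence of the degreewise lift $(\class{V},\dwclass{GI})$ on $\ch$, and that is the one step I would flag as non-formal. If one wanted to prove Corollary~\ref{cor-cotorsions} without invoking it, the work to be done is to check that $\dwclass{GI}$ is the right half of a complete cotorsion pair in $\ch$ (it is the right Ext-orthogonal of a set of complexes, built from a generating set of $\class{W} = \leftperp{\class{GI}}$, so completeness follows from the usual small-object/Eklof--Trlifaj machinery) whose left orthogonal is thick and meets $\dwclass{GI}$ in exactly the injective objects of $\ch$; the thickness is the only genuinely non-obvious point, and once it is in place Proposition~\ref{prop-Grothendieck direct limits} together with the corollaries above finish the job. (The componentwise claim that each $X_n$ is cotorsion needs nothing new: Gorenstein AC-injective modules are cotorsion by the observation made just before Corollary~\ref{cor-fin-flat-dim}.)
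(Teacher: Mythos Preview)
Your argument is correct and matches the paper's proof essentially line for line: the paper cites \cite[Proposition~7.2]{gillespie-recollement} for the injective cotorsion pair $(\class{V},\dwclass{GI})$ on $\ch$ and then says ``the rest follows from what we already observed above,'' meaning precisely the appeal to Corollaries~\ref{cor-fin-flat-dim} and~\ref{cor-flats}(1) that you spell out. Your extra paragraph on how one might verify the lift by hand is a helpful gloss but is not needed for the proof as stated.
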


\begin{proof}
We refer to~\cite[Proposition~7.2]{gillespie-recollement}. It shows that there is an injective cotorsion pair in $\ch$ whose right side consists of the class of all complexes of Gorenstein AC-injective modules. The rest follows from what we already observed above.
\end{proof}

Note that Corollary~\ref{cor-cotorsions} implies Stovicek's original result that any exact complex of injectives must have cotorsion cycle modules. However, the reader familiar with~\cite{stovicek-purity} will realize that our methods are just a generalization of those employed by Stovicek.
These methods however don't dualize. So the following result is quite interesting. It is the dual of Stovicek's result, assuming $R$ is a coherent ring.

\begin{theorem}\label{them-projcycles}
Let $R$ be a coherent ring and $P$ an exact complex of projectives. Then $\Ext^1_R(Z_nP,A) = 0$ for any absolutely pure module $A$.  Consequently, any cycle $Z_nP$ must be a retract of a transfinite extension of finitely presented modules.
\end{theorem}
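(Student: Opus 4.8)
The plan is to reduce the ``Consequently'' clause to the $\Ext$‑vanishing statement, and to prove the latter by character‑module duality, transporting it to the injective‑side theorem of Stovicek.

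\emph{Reducing the last sentence.} A module $M$ is absolutely pure exactly when $\Ext^1_R(F,M)=0$ for all finitely presented $F$; so, with $\class{S}$ a set of representatives for the finitely presented left $R$‑modules together with $R$ itself (which is finitely presented), the absolutely pure modules are precisely $\rightperp{\class{S}}$. By the Eklof--Trlifaj theorem $(\leftperp{(\rightperp{\class{S}})},\rightperp{\class{S}})$ is a complete cotorsion pair whose left class is exactly the class of direct summands of transfinite extensions of modules in $\class{S}$, i.e.\ of retracts of transfinite extensions of finitely presented modules. So it suffices to prove that $\Ext^1_R(Z_nP,A)=0$ for every absolutely pure $A$.

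\emph{Passing to character duals.} Each $P_n$ is projective, hence flat, so each $(P_n)^{+}=\Hom_{\Z}(P_n,\Q)$ is an injective right $R$‑module, and since $\Hom_{\Z}(-,\Q)$ is exact, $P^{+}$ is an exact complex of injective right $R$‑modules. As $R$ is coherent, Stovicek's theorem (the coherent analogue of total acyclicity for complexes of injectives, \cite[Prop.~7.9]{stovicek-purity}) gives that $\Hom_R(A,P^{+})$ is exact for every absolutely pure right $R$‑module $A$. From the natural isomorphism $\Hom_R(A,P^{+})\cong(A\tensor_R P)^{+}$ and the fact that $\Hom_{\Z}(-,\Q)$ reflects exactness we get that $A\tensor_R P$ is exact for every absolutely pure right $R$‑module $A$; breaking $P$ at its cycles, this is equivalent to $\Tor^R_i(A,Z_nP)=0$ for all $i\ge 1$, all $n$, and all absolutely pure right $A$. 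Dualizing once more, via $\Ext^1_R(Z_nP,A^{+})\cong\Tor^R_1(A,Z_nP)^{+}$, we obtain $\Ext^1_R(Z_nP,A^{+})=0$ for every absolutely pure right $R$‑module $A$ and every $n$.

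\emph{The remaining step, and the main obstacle.} It remains to upgrade the last vanishing --- which covers every left module that is a character dual of an absolutely pure right module --- to $\Ext^1_R(Z_nP,B)=0$ for \emph{every} absolutely pure left module $B$. The coherence of $R$ enters again here: $B$ embeds purely in the injective $B^{++}=(B^{+})^{+}$, and the cokernel $B^{++}/B$ is itself absolutely pure (since $\Ext^{\ge 2}_R(F,B)=0$ for finitely presented $F$, by dimension‑shifting along a finitely presented syzygy of $F$), and absolutely pure modules over a coherent ring behave well under direct limits; one combines these with the $\Tor$‑vanishing already obtained for the cycles $Z_nP$. I expect this to be the delicate point: running the long exact sequence of $0\to B\to B^{++}\to B^{++}/B\to 0$ naively only re‑expresses $\Ext^1_R(Z_nP,B)$ as a cokernel of a map into $\Hom_R(Z_nP,B^{++}/B)$ and loops back to the same problem for the absolutely pure module $B^{++}/B$, so the argument cannot proceed one module at a time and must use the finiteness/direct‑limit features supplied by coherence to break the circularity. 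Everything else --- the reduction, the duality isomorphisms, and the cycle‑shifting --- is routine.
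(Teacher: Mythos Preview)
Your reduction of the ``Consequently'' clause matches the paper's. The $\Ext$-vanishing argument, however, has two genuine gaps.

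First, the appeal to Stovicek is not available at this level of generality. The result you invoke (Prop.~7.9 of \cite{stovicek-purity}, restated here as Corollary~\ref{cor-Stov-injective-modules}) requires $R$ to be a \emph{Ding-Chen} ring, not merely coherent: one needs finite self-FP-injective dimension so that every absolutely pure module has finite flat dimension. Over a general coherent ring there is no reason for $\Hom_R(A,P^{+})$ to be exact for every absolutely pure right module $A$ and every exact complex $P^{+}$ of injectives, so your second step fails.

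Second, even granting that step, the endpoint is off target. For $A$ absolutely pure on the right and $R$ coherent, the character module $A^{+}$ is a \emph{flat} left module, not an absolutely pure one. Thus the vanishing $\Ext^1_R(Z_nP,A^{+})=0$ you reach is for (certain) flat test modules, and your ``remaining step'' is not a mild upgrade within the absolutely pure class but a passage to an entirely different class; the pure-embedding $B\hookrightarrow B^{++}$ does not connect back to what you have established, which is why the argument loops.

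The paper's proof avoids both problems by running the duality in the opposite direction. Since $R$ is coherent, $(\class{C},\class{D})=(\text{flat right modules},\ \text{absolutely pure left modules})$ is a duality pair, and \cite[Theorem~A.6]{bravo-gillespie-hovey} gives, for any complex $P$ of projectives, that $\Hom_R(P,A)$ is exact for every absolutely pure left $A$ if and only if $F\otimes_R P$ is exact for every flat right $F$. The latter is immediate because $P$ is exact and flat modules preserve exactness; the desired $\Ext$-vanishing follows at once via $\Ext^1_R(Z_nP,A)\cong\Ext^1_{\ch}(P,S^0(A))$. In short, instead of dualizing $P$ and appealing to an injective-side total-acyclicity statement that is unavailable over general coherent rings, one uses the duality-pair equivalence to reduce to the trivial flat-tensor side.
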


\begin{proof}
Since $P$ is exact, we have $\Ext^1_R(Z_nP,A) = \Ext^1_{\ch}(P,S^0(A))$, and so we can instead show that $\Ext^1_{\ch}(P,S^0(A)) = 0$ for all absolutely pure $A$. But it is enough to show $\homcomplex(P,S^0(A)) = \Hom_R(P,A)$ is exact whenever $P$ is an exact complex of projectives and $A$ is absolutely pure.  Let $\class{D}$ denote the class of all absolutely pure $R$-modules, and $\class{C}$ denote the class of all flat (right) $R$-modules. Since $R$ is coherent we have that $(\class{C},\class{D})$ forms a perfect duality pair with respect to the character modules of Section~\ref{subsec-character duality}. Therefore, by~\cite[Theorem~A.6]{bravo-gillespie-hovey}, $\Hom_R(P,A)$ is exact for all absolutely pure $A$ if and only if $F \otimes_R P$ is exact for all flat $F$. But certainly this is true.

The last statement holds since the ``set'' $\class{S}$, of all finitely presented modules, cogenerates the complete cotorsion pair $(\leftperp{(\rightperp{\class{S}})},\rightperp{\class{S}})$ and $\rightperp{\class{S}}$ is precisely the class of all absolutely pure modules.
\end{proof}

\begin{remark}
Suppose $R$ is any ring in which all level modules have finite flat dimension. Then the above argument will extend to show $\Ext^1_R(Z_nP,A) = 0$ for all absolutely clean modules $A$ and exact complexes of projectives $P$.
\end{remark}

\section{Applications to Ding injective modules and complexes}\label{sec-Ding injective}

This section is devoted to studying Ding injective modules and chain complexes over Ding-Chen rings.
As in~\cite{gillespie-ding}, a \emph{Ding-Chen ring} is a (left and right) coherent ring with ${}_RR$ and $R_R$ each of finite FP-injective (absolutely pure) dimension. We refer to~\cite{gillespie-ding} for more on Ding-Chen rings and Ding injective modules although the definitions below should suffice for our purposes.

\begin{definition}\label{def-Ding injectives}
We call an $R$-module $M$ \textbf{Ding injective} if there exists an exact complex of injectives $$\cdots \rightarrow I_1 \rightarrow I_0 \rightarrow I^0 \rightarrow I^1 \rightarrow \cdots$$ with $M = \ker{(I^0 \rightarrow I^1)}$ and which remains exact after applying $\Hom_R(A,-)$ for any absolutely pure module $A$.

In the same way, we call a chain complex $X$ \emph{Ding injective} if there exists an exact complex of injective complexes $$\cdots \rightarrow I_1 \rightarrow I_0 \rightarrow I^0 \rightarrow I^1 \rightarrow \cdots$$ with $X = \ker{(I^0 \rightarrow I^1)}$ and which remains exact after applying $\Hom_{\ch}(A,-)$ for any absolutely pure chain complex $A$. Recall that a chain complex $I$ is injective (resp. absolutely pure) in $\ch$ if and only if it is exact and each cycle $Z_nI$ is an injective (resp. absolutely pure) module.
\end{definition}

The content of Corollaries~\ref{cor-Stov-injective-modules} and~\ref{cor-Ding-injective-complexes} is that the $\Hom$ condition in the above definitions come automatically when $R$ is a Ding-Chen ring.
In fact, for the module case, we have the following stronger statement.

\begin{theorem}\label{them-D-injective-modules}
Let $R$ be a Ding-Chen ring. Then  $M$ is Ding injective if and only if $M = Z_0I$ for some exact complex $I$ of Ding injective $R$-modules. That is, any exact complex of Ding injectives automatically has Ding injective cycles.
\end{theorem}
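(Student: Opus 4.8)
The forward implication is immediate: if $M$ is Ding injective, then its defining exact complex of injectives is in particular an exact complex of Ding injective modules (every injective module is trivially Ding injective) having $M$ as a cycle. All the work is in the converse, and my plan is to reduce it to an $\Ext$-vanishing statement that can be cleared by dimension shifting.

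So let $I$ be an exact complex of Ding injective modules and put $M=Z_0I$. The structural input I would invoke, from \cite{gillespie-ding}, is that over a Ding-Chen ring $R$ the class $\class{DI}$ of Ding injective modules is the right-hand side of a hereditary complete cotorsion pair $(\class{W},\class{DI})$ in $R$-Mod whose left-hand class $\class{W}$ consists precisely of the modules of finite flat dimension (equivalently, finite FP-injective dimension); by Corollary~\ref{cor-fin-flat-dim} this $\class{W}$ is thick and contains all projectives and all flat modules. Granting this, $M\in\class{DI}$ if and only if $\Ext^1_R(W,M)=0$ for every module $W$ of finite flat dimension, so it suffices to prove this vanishing.

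Fix such a $W$, with flat dimension $d<\infty$. I would assemble the conclusion from three observations. First, since $R$ is coherent the modules of type $FP_\infty$ are exactly the finitely presented ones, so the absolutely clean modules coincide with the absolutely pure modules, and therefore the Ding injective modules are exactly the Gorenstein AC-injective modules; hence Corollary~\ref{cor-cotorsions} applies to the exact complex $I$ and each cycle $Z_nI$ is a cotorsion module. Second, breaking $I$ into short exact sequences $0\to Z_{n+1}I\to I_{n+1}\to Z_nI\to 0$ and using that $\Ext^i_R(W,I_{n+1})=0$ for all $i\geq 1$ — which holds because $W\in\class{W}$, $I_{n+1}\in\class{DI}=\rightperp{\class{W}}$, and the pair is hereditary — dimension shifting $d$ times along $I$ gives $\Ext^1_R(W,M)\cong\Ext^{d+1}_R(W,Z_dI)$. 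Third, taking a projective resolution of $W$ and letting $\Omega^dW$ be its $d$-th syzygy, the finiteness of the flat dimension of $W$ forces $\Omega^dW$ to be flat, whence $\Ext^{d+1}_R(W,Z_dI)\cong\Ext^1_R(\Omega^dW,Z_dI)=0$, the last equality because $\Omega^dW$ is flat and $Z_dI$ is cotorsion. Combining the three gives $\Ext^1_R(W,M)=0$; the final sentence of the theorem then follows by applying this to each cycle of an arbitrary exact complex of Ding injectives.

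The homological bookkeeping here — the two rounds of dimension shifting and the appeal to Corollary~\ref{cor-cotorsions} — is routine. The part that carries the real weight is the structural fact quoted in the second paragraph: that the Ding-Chen hypothesis on $R$ forces the Ding injective modules to be the right-hand class of a hereditary cotorsion pair whose co-trivial class is exactly the modules of finite flat dimension. That identification is precisely where the coherence of $R$ and the finiteness of the self-FP-injective dimension enter, and it is the only nontrivial ingredient; once it is in place the argument finishes mechanically.
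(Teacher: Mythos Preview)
Your argument is correct, but it takes a different route from the paper's proof. The paper works directly in $\ch$: it lifts the injective cotorsion pair $(\class{W},\class{DI})$ to the injective cotorsion pair $(\leftperp{\exclass{DI}},\exclass{DI})$ on chain complexes, where $\exclass{DI}$ is the class of exact complexes of Ding injectives, and then applies Corollary~\ref{cor-flats}(2) to get $S^n(W)\in\leftperp{\exclass{DI}}$ for every $W\in\class{W}$. The conclusion $\Ext^1_R(W,Z_nI)=0$ then drops out of the single isomorphism $\Ext^1_{\ch}(S^n(W),I)\cong\Ext^1_R(W,Z_nI)$, with no dimension shifting needed. Your approach, by contrast, stays at the module level: you invoke Corollary~\ref{cor-cotorsions} (via the identification of Ding injectives with Gorenstein AC-injectives over coherent rings) to get cotorsion cycles, and then run two rounds of dimension shifting to reduce $\Ext^1_R(W,M)$ to $\Ext^1_R(\Omega^dW,Z_dI)$ with $\Omega^dW$ flat. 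Both proofs ultimately rest on Proposition~\ref{prop-Grothendieck direct limits} through one of its corollaries, and both depend on the same structural fact you correctly highlight as the load-bearing step, namely that $(\class{W},\class{DI})$ is a hereditary injective cotorsion pair with $\class{W}$ the modules of finite flat dimension. The paper's version is shorter and avoids having to pass through the Gorenstein AC-injective identification; yours has the virtue of making the role of the cotorsion property of the cycles explicit and keeping the argument inside $R\text{-Mod}$ once Corollary~\ref{cor-cotorsions} is in hand.
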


\begin{proof}
Since $R$ is Ding-Chen, the modules of finite flat dimension coincide with the modules of finite absolutely pure dimension~\cite{ding and chen 93}. Denote this class by $\class{W}$. We have that $(\class{W},\class{DI})$ is an injective cotorsion pair, where $\class{DI}$ is the class of all Ding injective modules; see~\cite[Theorem~3.4]{ding and mao 07} and~\cite[Corollary~4.5]{gillespie-ding}. By~\cite[Prop.~7.2]{gillespie-recollement} it lifts to another injective cotorsion pair $(\leftperp{\exclass{DI}},\exclass{DI})$ on $\ch$, where $\exclass{DI}$ is the class of all exact complexes of Ding injectives. By Corollary~\ref{cor-flats}~(2) we have that $S^n(F) \in \leftperp{\exclass{DI}}$ for each flat module $F$ and hence for each $F$ of finite flat dimension. Thus for all $W \in \class{W}$ and $I \in \exclass{DI}$ we have $0 = \Ext^1(S^n(W),I) = \Ext^1_R(W,Z_nI)$. Hence each $Z_nI$ must be Ding injective.
\end{proof}

Since injective modules are Ding injective we deduce the following corollary.

\begin{corollary}\cite[Prop.~7.9]{stovicek-purity}\label{cor-Stov-injective-modules}
Let $R$ be a Ding-Chen ring. Then $M$ is Ding injective if and only if $M = Z_0I$ for some exact complex $I$ of injective $R$-modules. Therefore,  for any absolutely pure $A$, any exact complex $I$ of injectives will remain exact after applying $\Hom_R(A,-)$.
\end{corollary}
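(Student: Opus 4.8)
The plan is to read this off from Theorem~\ref{them-D-injective-modules} essentially for free, the only additional ingredient being the trivial observation that injective modules are Ding injective. I would first settle the biconditional. Its forward direction is just unwinding the definition: if $M$ is Ding injective it is by definition the kernel $\ker(I^0 \to I^1)$ occurring in an exact complex of injectives, and after a shift this realizes $M$ as $Z_0I$ for an exact complex $I$ of injectives. For the converse, note that any injective module $E$ is Ding injective, witnessed by the exact complex $\cdots \to 0 \to E \xrightarrow{1_E} E \to 0 \to \cdots$ of injectives (which trivially remains exact under $\Hom_R(A,-)$ for every $A$). Hence an exact complex of injectives is in particular an exact complex of Ding injectives, so Theorem~\ref{them-D-injective-modules} applies and tells us that each of its cycles --- in particular $Z_0I$ --- is Ding injective.

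For the final (``Therefore'') assertion I would proceed as follows. Fix an exact complex $I$ of injectives and an absolutely pure module $A$. Applying the biconditional just proved to the suspensions of $I$ shows that every cycle $Z_nI$ lies in $\class{DI}$, the class of Ding injective modules. Now recall from the proof of Theorem~\ref{them-D-injective-modules} that $(\class{W},\class{DI})$ is a cotorsion pair, where $\class{W}$ is the class of modules of finite flat (equivalently, over a Ding-Chen ring, finite absolutely pure) dimension. Since $A$, being absolutely pure, has absolutely pure dimension $0$, we get $A \in \class{W}$, hence $\Ext^1_R(A,Z_nI) = 0$ for all $n$. Splitting $I$ into short exact sequences $0 \to Z_nI \to I_n \to Z_{n-1}I \to 0$, applying $\Hom_R(A,-)$, and feeding the vanishing $\Ext^1_R(A,Z_nI)=0$ into the resulting long exact sequence shows that each map $\Hom_R(A,I_n) \to \Hom_R(A,Z_{n-1}I)$ is surjective; combined with the left exactness of $\Hom_R(A,-)$ this is precisely the statement that $\Hom_R(A,I)$ is exact. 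This recovers Stovicek's Prop.~7.9.

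I do not anticipate a genuine obstacle: everything substantial has been absorbed into Theorem~\ref{them-D-injective-modules}, which in turn rests on Corollary~\ref{cor-flats}~(2) and the lifting of the injective cotorsion pair $(\class{W},\class{DI})$ to $\ch$. The only steps here needing a sentence of justification are the two soft facts used above --- that injective modules are Ding injective, and that $\Ext^1$-vanishing against the cycles of an exact complex of injectives is equivalent to exactness of the Hom complex --- and both are routine homological bookkeeping.
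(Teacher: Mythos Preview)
Your proposal is correct and follows exactly the route the paper takes: the corollary is deduced from Theorem~\ref{them-D-injective-modules} together with the observation that injective modules are Ding injective. The paper's proof is a single sentence; you have simply spelled out the routine homological details that are implicit there, in particular the passage from $\Ext^1_R(A,Z_nI)=0$ to exactness of $\Hom_R(A,I)$.
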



It was shown by Yang, Liu, and Liang in~\cite{Ding-Chen-complex-models} that the Ding injective complexes are precisely the complexes $X$ for which each $X_n$ is Ding injective and all chain maps $A \xrightarrow{} X$ are null homotopic whenever $A$ is an absolutely pure (FP-injective) chain complex. We now show that the null homotopy condition is automatic when $R$ is a Ding-Chen ring.

\begin{corollary}\label{cor-Ding-injective-complexes}
Let $R$ be a Ding-Chen ring. Then a complex $X$ is Ding injective if and only if each $X_n$ is a Ding injective $R$-module.
\end{corollary}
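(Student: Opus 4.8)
The plan is to prove the two implications separately. The forward implication reduces instantly to the module case, while the reverse one mimics the proof of Theorem~\ref{them-D-injective-modules} inside the Grothendieck category $\ch$.

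\emph{Forward direction.} Suppose $X$ is Ding injective in $\ch$, and fix an exact complex of injective chain complexes $\cdots \to I_1 \to I_0 \to I^0 \to I^1 \to \cdots$ with $X = \ker(I^0 \to I^1)$. Applying the exact evaluation functor $(-)_n \colon \ch \to \rmod$, and recalling that an injective chain complex has each component injective (from the short exact sequences $0 \to Z_kI \to I_k \to Z_{k-1}I \to 0$, whose outer terms are injective), we obtain an exact complex of injective $R$-modules having $X_n$ as a cycle. By Corollary~\ref{cor-Stov-injective-modules}, $X_n$ is Ding injective; over a Ding-Chen ring no $\Hom$-acyclicity needs to be verified, so this degreewise exact complex of injectives already suffices.

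\emph{Reverse direction.} Assume now that each $X_n$ is Ding injective. The external input I would use is that over a Ding-Chen ring the Ding injective chain complexes form the right-hand class of an injective cotorsion pair on $\ch$ whose left-hand class is the class of chain complexes of finite flat dimension; this follows from the work of Liang, Liu, and Yang \cite{Liang-Liu, Ding-Chen-complex-models} together with the description of the Ding injective complexes recalled just before the corollary. One now runs the argument of Theorem~\ref{them-D-injective-modules} one categorical level up: lift this cotorsion pair via \cite[Prop.~7.2]{gillespie-recollement} to an injective cotorsion pair on the category of chain complexes of objects of $\ch$, whose right-hand class consists of the exact complexes all of whose components are Ding injective chain complexes; use Corollary~\ref{cor-flats}(2) together with thickness of the left-hand class and dimension-shifting to place each sphere $S^n(V)$ on a flat (hence on a finite-flat-dimension) chain complex $V$ into the left-hand class; and compute, exactly as in Theorem~\ref{them-D-injective-modules}, that $\Ext^1_{\ch}(V, Z_n\mathbf{I}) = 0$ for every chain complex $V$ of finite flat dimension and every $\mathbf{I}$ in this right-hand class, whence each $Z_n\mathbf{I}$ is Ding injective in $\ch$. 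This is the chain-complex analogue of Theorem~\ref{them-D-injective-modules}: every cycle of an exact complex of Ding injective chain complexes is again Ding injective in $\ch$. It therefore remains only to exhibit one exact complex of injective chain complexes — a fortiori of Ding injective chain complexes — having $X$ as a cycle. This I would build by a two-sided horseshoe argument: Corollary~\ref{cor-Stov-injective-modules} supplies, for each $n$, an injective resolution $\cdots \to I_1^{(n)} \to I_0^{(n)} \to X_n \to 0$ and an injective coresolution $0 \to X_n \to I^0_{(n)} \to I^1_{(n)} \to \cdots$ of the module $X_n$; assembling these along the chain-complex direction through the disk complexes $D^m(-)$ produces an exact complex of injective chain complexes with cycle $X$, and one checks that its successive cycles remain componentwise Ding injective using that the class of Ding injective modules is closed under extensions.

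\emph{The main obstacle.} The substantive step is the middle of the reverse direction: establishing (or citing) that over a Ding-Chen ring the Ding injective complexes are the fibrant class of an injective cotorsion pair on $\ch$ with trivial class the finite-flat-dimension complexes — this is precisely where coherence and the finiteness hypothesis on $R$ are used — and then faithfully re-running the lifting-plus-$\Ext$ computation of Theorem~\ref{them-D-injective-modules} in the category of complexes of complexes, tracking carefully which spheres land in the trivial class. By contrast the degreewise evaluation in the forward direction and the horseshoe assembly are routine.
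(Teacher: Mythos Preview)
Your forward direction matches the paper's ``only if is easy'' remark.

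For the reverse direction your route and the paper's diverge sharply. The paper never leaves $\ch$: it uses \cite[Prop.~7.2]{gillespie-recollement} to lift the module-level pair $(\class{W},\class{DI})$ to the injective cotorsion pair $(\leftperp{\dwclass{DI}},\dwclass{DI})$ on $\ch$, where $\dwclass{DI}$ is \emph{already} the class of complexes with Ding injective components. Corollary~\ref{cor-fin-flat-dim} then puts every complex of finite flat dimension into $\leftperp{\dwclass{DI}}$, and the Ding--Chen hypothesis forces every absolutely pure complex $A$ to have finite flat dimension. Hence $\homcomplex(A,X)$ is exact for every $X\in\dwclass{DI}$, and the Yang--Liu--Liang characterization recalled just before the corollary finishes the proof in one line. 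No horseshoe, no passage to $\textnormal{Ch}(\ch)$.

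Your proposal instead transports the entire machinery of Theorem~\ref{them-D-injective-modules} one categorical level up, then caps it with a two-sided horseshoe. Structurally this is the \emph{dual of the paper's projective argument} (Corollary~\ref{cor-Ding-projective-complexes}), not its injective one. It can be made to work, but it costs more: you must (i) justify the injective cotorsion pair on $\ch$ with right class the Ding injective complexes and left class the finite-flat-dimension complexes---this is not quite what is stated before the corollary and needs a precise citation from \cite{Ding-Chen-complex-models}; (ii) check that the argument of Corollary~\ref{cor-flats}(2) and the thickness/direct-limit step really go through in $\textnormal{Ch}(\ch)$ rather than $\ch$ (they do, using that flat complexes are direct limits of projective complexes, but the corollary as stated is for $\ch$); and (iii) flesh out the horseshoe---your ``assembling through disk complexes'' is the correct idea (dualize the proof of Corollary~\ref{cor-Ding-projective-complexes} using products of disks $\prod D^n(-)$ for the left resolution), but the sentence as written is not yet a proof.

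In short: your plan is sound but circuitous. The paper's key observation is that once you know $(\leftperp{\dwclass{DI}},\dwclass{DI})$ is an injective cotorsion pair, the Yang--Liu--Liang characterization reduces everything to showing absolutely pure complexes lie in the trivial class---and that is immediate from Corollary~\ref{cor-fin-flat-dim} plus the finite flat dimension of absolutely pure complexes over a Ding--Chen ring.
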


\begin{proof}
The ``only if'' part is easy from the definition of Ding injective. For the converse, we use that for any coherent ring $R$, the Ding injective cotorsion pair $(\class{W},\class{DI})$ on $R$-Mod lifts to an injective model structure $(\leftperp{\dwclass{DI}}, \dwclass{DI})$ on $\ch$, by~\cite[Proposition~7.2]{gillespie-recollement}.  Here, $\dwclass{DI}$ is the class of all complexes of Ding injective modules.
From Corollary~\ref{cor-fin-flat-dim} we have that each complex of finite flat dimension is in $\leftperp{\dwclass{DI}}$. But in the case that $R$ is Ding-Chen, it is easy to see that any absolutely pure complex $A$ has finite flat dimension. Indeed such an $A$ is an exact complex with each cycle module $Z_nA$ absolutely pure. Thus each $Z_nA$ has finite flat dimension. Now taking a flat resolution of $A$, the upper bound on the flat dimensions implies that $A$ itself has finite flat dimension. Hence $A  \in \leftperp{\dwclass{DI}}$ for each absolutely pure $A$.
It follows that $\homcomplex(A,X)$ is exact for any absolutely pure $A$ and any complex $X \in \dwclass{DI}$, whenever $R$ is Ding-Chen.
\end{proof}

\begin{remark}
It was shown by Yang, Liu, and Liang in~\cite{Ding-Chen-complex-models} that the class of Ding injective complexes is precisely $\dgclass{DI}$ whenever $R$ is Ding-Chen. So we now have shown that both  $\dgclass{DI} = \dwclass{DI}$ and $\exclass{DI} = \tilclass{DI}$, whenever $R$ is a Ding-Chen ring.  (Again, we are using the notation from~\cite{gillespie}.)
\end{remark}

\section{Complexes of projective complexes}\label{sec-complexes of projective complexes}

Our main purpose here is to prove Theorem~\ref{thm-dual-exact}, which will be our tool in Section~\ref{sec-Ding projectives} for characterizing Ding projective complexes. We get this by generalizing results in~\cite[Appendix~A]{bravo-gillespie-hovey}, from modules to chain complexes.

Since $\ch$ is itself an abelian category we can of course consider  $\textnormal{Ch}(\ch)$, the category of chain complexes of chain complexes. Using~\cite[Sign Trick~1.2.5]{weibel}, the category $\textnormal{Ch}(\ch)$ can be identified with the category of bicomplexes. However, for our purpose here it is somewhat easier to stick with the category $\textnormal{Ch}(\ch)$.

\subsection{Free chain complexes} A free $R$-module is one that is isomorphic to a direct sum of copies of $R$. Analogously, we say a chain complex $F \in \ch$ is \textbf{free} if it is isomorphic to a direct sum $\oplus_{i \in I}D^{n_i}(R)$ where each $n_i$ is some integer. Clearly this is equivalent to saying $F$ is isomorphic to $\oplus_{n \in \Z} D^n(F_n)$ where each $F_n$ is some free $R$-module. It is also equivalent to define $F$ to be an exact complex with each $Z_nF$ a free $R$-module. However, it will be sufficient and most convenient for us to use the representations $\oplus_{i \in I}D^{n_i}(R)$.

Evidently free complexes are closed under arbitrary direct sums. It is also easy to check that free complexes are projective objects in $\ch$, and that each projective complex $P$ is a retract of a free complex.

\begin{lemma}[Eilenberg's swindle]\label{lemma-eilenberg-swindle}
Given any projective chain complex $P$, there exists a free chain complex $F$ such that $P \oplus F \cong F$.
\end{lemma}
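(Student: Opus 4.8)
The plan is to run the classical Eilenberg swindle verbatim in the category $\ch$, using only the two facts recorded just above the statement: every projective complex is a retract of a free complex, and free complexes are closed under arbitrary direct sums.

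First I would invoke the retract statement to produce a free complex $Q$ together with a complex $P'$ satisfying $P \oplus P' \cong Q$. Concretely, if $P$ is a retract of the free complex $G$, with splitting $G \twoheadrightarrow P$, one takes $P'$ to be the kernel of this retraction, so that $P \oplus P' \cong G =: Q$. Then I would set $F = \bigoplus_{k \in \N} Q$, the countable direct sum of copies of $Q$; by the closure of free complexes under direct sums, $F$ is again a free complex.

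Next I would compute, reassociating the (countable) direct sum:
$$P \oplus F \;=\; P \oplus \bigoplus_{k \in \N} Q \;\cong\; P \oplus \bigoplus_{k \in \N}\bigl(P' \oplus P\bigr) \;\cong\; \bigoplus_{k \in \N}\bigl(P \oplus P'\bigr) \;\cong\; \bigoplus_{k \in \N} Q \;=\; F,$$
where the middle isomorphism is the standard ``shift'' reindexing of a countable coproduct, absorbing the extra copy of $P$ into the first summand. This yields $P \oplus F \cong F$ with $F$ free, as required.

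The argument is essentially obstruction-free; the only point deserving a word of care is the middle reassociation, which is legitimate because $\ch$ is cocomplete and coproducts there are associative and commutative, so the bookkeeping $P \oplus (P' \oplus P) \oplus (P' \oplus P) \oplus \cdots \cong (P \oplus P') \oplus (P \oplus P') \oplus \cdots$ causes no difficulty. Note that no hypotheses on the ring $R$ enter the proof.
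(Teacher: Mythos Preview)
Your argument is correct and essentially identical to the paper's proof: both take a complement $P'$ with $P \oplus P' = Q$ free, form the countable direct sum $F = \bigoplus_{\N} Q$, and reassociate to absorb the extra copy of $P$. The only cosmetic difference is notation (the paper writes $Q$ for your $P'$ and $P \oplus Q$ for your free $Q$).
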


\begin{proof}
We follow~\cite[Corollary~2.7]{lam}. Since $P$ is projective, as noted above we can find another (projective) complex $Q$ such that $P \oplus Q$ is a free complex. Setting
$$F = (P \oplus Q) \oplus (P \oplus Q) \oplus (P \oplus Q) \oplus \cdots$$ will produce a free complex as desired. Indeed $$F \cong P \oplus (Q \oplus P) \oplus (Q \oplus P) \oplus (Q \oplus P) \cdots \cong P \oplus F.$$
\end{proof}

We now need to study chain complexes of projective chain complexes. That is, objects of $\textnormal{Ch}(\ch)$ with each component a projective complex.

\begin{lemma}\label{lemma-complexes of projectives are retracts of complexes of frees}
Let $R$ be any ring, and $\mathbb{P} \in \textnormal{Ch}(\ch)$ a complex of projective chain complexes. Then
$\mathbb{P}$ is a direct summand of a complex $\mathbb{F}$ of free chain complexes. Furthermore, if $\mathbb{P}$
is exact then $\mathbb{F}$ can be taken to be exact.
\end{lemma}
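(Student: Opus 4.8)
The plan is to imitate Eilenberg's swindle (Lemma~\ref{lemma-eilenberg-swindle}) one categorical level up, inside $\textnormal{Ch}(\ch)$. Concretely, for each $n$ we have a projective chain complex $\mathbb{P}_n$, and by the remarks preceding Lemma~\ref{lemma-eilenberg-swindle} we can choose a projective complex $\mathbb{Q}_n$ with $\mathbb{P}_n \oplus \mathbb{Q}_n$ free. The first step is to assemble these into a single object of $\textnormal{Ch}(\ch)$: form the complex $\mathbb{P} \oplus \mathbb{D}$, where $\mathbb{D}$ is built from disks on the $\mathbb{Q}_n$ so as to be contractible in the $\textnormal{Ch}(\ch)$-direction. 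Precisely, set $\mathbb{D} = \bigoplus_{n} D^n(\mathbb{Q}_n \oplus \mathbb{Q}_{n-1})$, the disk (in the outer $\textnormal{Ch}(\ch)$-grading) built on the complexes $\mathbb{Q}_n$; each outer component of $\mathbb{P} \oplus \mathbb{D}$ is then $\mathbb{P}_n \oplus \mathbb{Q}_n \oplus \mathbb{Q}_{n-1}$, a direct sum of two free complexes, hence free. Since $\mathbb{D}$ is an (outer) disk complex it is exact in $\textnormal{Ch}(\ch)$, so if $\mathbb{P}$ is exact then so is $\mathbb{P} \oplus \mathbb{D}$. This already shows $\mathbb{P}$ is a summand of a $\textnormal{Ch}(\ch)$-complex of free complexes, and exactness is preserved; so one could simply take $\mathbb{F} = \mathbb{P} \oplus \mathbb{D}$.

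Alternatively, and perhaps more cleanly, one applies the swindle componentwise but coherently. Since each $\mathbb{P}_n \oplus \mathbb{Q}_n$ is free, the complex $\mathbb{F}_n := (\mathbb{P}_n \oplus \mathbb{Q}_n)^{(\omega)} = \bigoplus_{k \in \omega}(\mathbb{P}_n \oplus \mathbb{Q}_n)$ is free and, exactly as in Lemma~\ref{lemma-eilenberg-swindle}, satisfies $\mathbb{F}_n \cong \mathbb{P}_n \oplus \mathbb{F}_n$. The point is that these isomorphisms can be chosen naturally in $n$, because the swindle isomorphism is built only from the associativity/commutativity of $\oplus$ and the fixed data $\mathbb{Q}_n$; hence the differentials of $\mathbb{P}$ transport to differentials making $\mathbb{F} := (\mathbb{P}_n \oplus \mathbb{Q}_n)^{(\omega)}$ into an object $\mathbb{F} \in \textnormal{Ch}(\ch)$ of free chain complexes with $\mathbb{F} \cong \mathbb{P} \oplus \mathbb{F}'$ for a complementary complex $\mathbb{F}'$; in particular $\mathbb{P}$ is a direct summand of $\mathbb{F}$. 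For the exactness clause, note that a countable direct sum of complexes is exact iff each summand is, and that $\mathbb{F}$ is a direct sum of shifted copies of $\mathbb{P} \oplus \mathbb{Q}$ where $\mathbb{Q}$ denotes the complex with components $\mathbb{Q}_n$; choosing the $\mathbb{Q}_n$ and arranging $\mathbb{Q}$ to be exact (e.g.\ by absorbing disks as in the first approach) makes $\mathbb{F}$ exact whenever $\mathbb{P}$ is.

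The main obstacle I anticipate is purely bookkeeping: making sure that the choices of complements $\mathbb{Q}_n$ and the swindle isomorphisms are compatible with the outer differentials $d^{\mathbb{P}} \colon \mathbb{P}_n \to \mathbb{P}_{n-1}$, so that $\mathbb{F}$ really is an object of $\textnormal{Ch}(\ch)$ and not merely a graded object. The first approach sidesteps this entirely — adding an exact disk complex $\mathbb{D}$ requires no naturality, since $\mathbb{P} \oplus \mathbb{D}$ has its differential defined coordinatewise — so I would present that one, and only remark that the Eilenberg-swindle version also works if one wants $\mathbb{F}$ to have all \emph{infinitely generated} free components. Everything else (free complexes closed under direct sums, disks exact, summands and direct sums detecting exactness) is already recorded in the subsection on free chain complexes.
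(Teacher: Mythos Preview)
Your first approach has a gap that the paper's proof avoids precisely by invoking Lemma~\ref{lemma-eilenberg-swindle} \emph{before} forming the disk complex. You choose projective complements $\mathbb{Q}_n$ with $\mathbb{P}_n \oplus \mathbb{Q}_n$ free, and then assert that the degree-$n$ component $\mathbb{P}_n \oplus \mathbb{Q}_n \oplus \mathbb{Q}_{n-1}$ of $\mathbb{P}\oplus\mathbb{D}$ is ``a direct sum of two free complexes.'' But $\mathbb{Q}_{n-1}$ is only projective, not free, so this fails in general. (There is also an indexing slip in your formula for $\mathbb{D}$ --- as written, its degree-$n$ piece is $\mathbb{Q}_{n+1}\oplus\mathbb{Q}_n^2\oplus\mathbb{Q}_{n-1}$ --- but the stray projective summand persists under any reasonable reindexing.)

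The paper repairs exactly this point: apply the swindle of Lemma~\ref{lemma-eilenberg-swindle} first, componentwise, to obtain for each $n$ a \emph{free} complex $F^n$ with $\mathbb{P}_n \oplus F^n \cong F^n$, and only then form $\mathbb{F} = \mathbb{P} \oplus \bigoplus_{n} D^n(F^n)$. Its degree-$n$ component is $\mathbb{P}_n \oplus F^n \oplus F^{n+1} \cong F^n \oplus F^{n+1}$, free because both summands are. The absorbing property $\mathbb{P}_n \oplus F^n \cong F^n$ is precisely what eliminates $\mathbb{P}_n$ without leaving a stray projective behind. Your second paragraph essentially rediscovers this --- the countable sum $(\mathbb{P}_n \oplus \mathbb{Q}_n)^{(\omega)}$ is the $F^n$ of Lemma~\ref{lemma-eilenberg-swindle} --- but you never feed it back into the disk construction. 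Once you do, the naturality worry you raise evaporates: adding an outer direct sum of disks $D^n(F^n)$ requires no compatibility whatsoever with the differential of $\mathbb{P}$, so the first approach (corrected as above) is the clean one, and it is exactly what the paper does.
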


\begin{proof}
By Lemma~\ref{lemma-eilenberg-swindle} we may, for each $\mathbb{P}_n$, find a free chain complex $F^n$ such that $\mathbb{P}_n \oplus F^n \cong F^n$. We form $\mathbb{F} = \mathbb{P} \oplus (\oplus_{n \in \Z} D^n(F^n))$. This is a complex of complexes which in degree $n$ is the chain complex $\mathbb{P}_n \oplus
F^n \oplus F^{n+1} \cong F^n \oplus F^{n+1}$, and so a free chain complex.
We are done since $\mathbb{P}$ is a direct summand of $\mathbb{P} \oplus (\oplus_{n \in \Z} D^n(F^n))$. Moreover, $\mathbb{P} \oplus (\oplus_{n \in \Z} D^n(F^n))$ is exact
whenever $\mathbb{P}$ is exact.
\end{proof}

\begin{theorem}\label{thm-bounded above complexes of finitely generated frees cogenerate}
Let $R$ be any ring, and let $\class{S}$ be the set of all bounded above complexes of finitely generated free complexes. The cotorsion pair $(\leftperp{(\rightperp{S})}, \rightperp{S})$ in $\textnormal{Ch}(\ch)$, cogenerated by $\class{S}$, is functorially complete. Moreover, $\class{C} = \leftperp{(\rightperp{S})}$ is the class of all complexes of projective complexes.
\end{theorem}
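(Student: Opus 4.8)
### Proof proposal for Theorem~\ref{thm-bounded above complexes of finitely generated frees cogenerate}

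The plan is to mimic the classical Eklof–Trlifaj machinery, exactly as it is used in $R$-Mod to show that complexes of projectives form the left half of a cogenerated cotorsion pair, but now one category up, in $\textnormal{Ch}(\ch)$. First I would invoke the standard fact (Eklof–Trlifaj, see~\cite{trlifaj-book}) that any cotorsion pair cogenerated by a \emph{set} $\class{S}$ of objects in a Grothendieck category is complete, and in fact functorially complete since $\textnormal{Ch}(\ch)$ is locally presentable; this immediately gives the first assertion once we observe that $\class{S}$, the bounded-above complexes of finitely generated free complexes, is genuinely a set (up to isomorphism): a finitely generated free complex is a finite direct sum of disks $D^m(R^{k})$, and a bounded-above complex of such has only set-many isomorphism types. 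So the content to prove is the identification $\class{C} = \leftperp{(\rightperp{\class{S}})} = \{\text{complexes of projective complexes}\}$.

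For the inclusion $\class{C} \subseteq \{\text{complexes of projective complexes}\}$, the key step is to check that $\rightperp{\class{S}}$ contains every \emph{disk} $D^n(D^m(R))$ in $\textnormal{Ch}(\ch)$ — indeed all disks on arbitrary complexes — so that $\Ext^1_{\textnormal{Ch}(\ch)}$-vanishing against all of $\class{S}$ forces an object of $\class{C}$ to be, degreewise, a summand of a free complex; here I would use that $S^n(S^m(R))$ lies in $\class{S}$ (it is a bounded-above complex with a single finitely generated free complex in one spot) and that $\Ext^1_{\textnormal{Ch}(\ch)}(S^n(S^m(R)), \mathbb{Y})$ detects whether the relevant component $\mathbb{Y}_n$ has, in turn, its component $(\mathbb{Y}_n)_m$ appearing as a cycle — a bookkeeping argument with the characterization "$P$ projective in $\ch$ iff exact with projective cycles." The cleaner route, which I would actually pursue, is: $\leftperp{(\rightperp{\class{S}})}$ is always closed under transfinite extensions, direct summands, and contains $\class{S}$; by Lemma~\ref{lemma-complexes of projectives are retracts of complexes of frees} every complex of projective complexes is a summand of a complex of free complexes, and a complex of free complexes is a transfinite extension (even a direct sum, degreewise) of disks $D^n(D^m(R))$ — each of which is in $\class{S}$ or is itself a summand of something built from $\class{S}$. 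Running Eklof's lemma then gives $\{\text{complexes of projective complexes}\} \subseteq \leftperp{(\rightperp{\class{S}})} = \class{C}$.

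For the reverse inclusion $\class{C} \subseteq \{\text{complexes of projective complexes}\}$, I would argue that any $\mathbb{X} \in \class{C}$ has each component $\mathbb{X}_n$ projective in $\ch$. Since $\class{C} = \leftperp{(\rightperp{\class{S}})}$, we have $\Ext^1_{\textnormal{Ch}(\ch)}(\mathbb{X}, \mathbb{Y}) = 0$ for all $\mathbb{Y} \in \rightperp{\class{S}}$; I would identify enough objects in $\rightperp{\class{S}}$ of the form "disks/spheres on arbitrary complexes" to conclude via the adjunction-type computation $\Ext^1_{\textnormal{Ch}(\ch)}(\mathbb{X}, D^n(Y)) \cong \Ext^1_{\ch}(\mathbb{X}_n, Y)$ (the analogue, one level up, of $\Ext^1_{\ch}(X, D^n(M)) \cong \Ext^1_R(X_n, M)$) that $\Ext^1_{\ch}(\mathbb{X}_n, Y) = 0$ for all complexes $Y$, forcing $\mathbb{X}_n$ to be projective in $\ch$. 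The point needing care is that $D^n(Y) \in \rightperp{\class{S}}$ for every $Y$: this follows because $\Ext^1_{\textnormal{Ch}(\ch)}(S, D^n(Y)) \cong \Ext^1_{\ch}(S_n, Y)$ and each component $S_n$ of an $S \in \class{S}$ is a finitely generated free complex, hence projective in $\ch$, so the Ext vanishes.

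The main obstacle I anticipate is purely organizational rather than deep: keeping straight the three layers of "projective" — projective $R$-modules, projective objects of $\ch$ (= exact complexes with projective cycles), and projective objects of $\textnormal{Ch}(\ch)$ — and verifying the two Ext-adjunction isomorphisms ($\Ext^1$ of a complex-of-complexes against a disk-on-a-complex collapses to $\Ext^1_{\ch}$ of the appropriate component) with the correct sign/degree conventions from Section~\ref{subsec-complexes}. Once those isomorphisms are pinned down, both inclusions are formal consequences of Eklof's lemma and Lemma~\ref{lemma-complexes of projectives are retracts of complexes of frees}, and functorial completeness is automatic from the set-cogenerated hypothesis in a locally presentable abelian category.
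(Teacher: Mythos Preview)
Your argument for the inclusion $\class{C} \subseteq \{\text{complexes of projective complexes}\}$ via the adjunction $\Ext^1_{\textnormal{Ch}(\ch)}(\mathbb{X}, D^n(Y)) \cong \Ext^1_{\ch}(\mathbb{X}_{n}, Y)$ is fine and is a legitimate alternative to the paper's route (the paper instead invokes Hovey's result that $\leftperp{(\rightperp{\class{S}})}$ consists exactly of summands of transfinite extensions of objects in $\class{S}$, from which both inclusions are read off at once). The invocation of Eklof--Trlifaj for functorial completeness is also fine.

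The genuine gap is in the other inclusion. Your ``cleaner route'' asserts that a complex of free complexes is a transfinite extension --- ``even a direct sum, degreewise'' --- of disks $D^n(D^m(R))$. This is false. Degreewise each $\mathbb{F}_n$ is a direct sum of disks $D^{m}(R)$ in $\ch$, but the differential $d\colon \mathbb{F}_n \to \mathbb{F}_{n-1}$ can send a single summand $D^{m}(R)$ into several summands of $\mathbb{F}_{n-1}$, so there is no reason for $\mathbb{F}$ to decompose, in $\textnormal{Ch}(\ch)$, as a sum or filtration by objects $D^n(D^m(R))$. Concretely, take $\mathbb{F}_0 = D^0(R)$, $\mathbb{F}_{-1} = D^0(R)\oplus D^0(R)$, $\mathbb{F}_k = 0$ otherwise, with $d$ the diagonal. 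The only disk subobject containing $\mathbb{F}_0$ is the diagonal copy $D^0(D^0(R))$, and the quotient is the sphere $S^{-1}(D^0(R))$, which admits no nonzero disk subobject at all; so $\mathbb{F}$ is not a transfinite extension of disks.

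This is exactly where the paper does real work. One must filter $\mathbb{F}$ not by disks but by general elements of $\class{S}$: starting from a single disk summand in some degree, one follows the differential downward, at each step landing in only finitely many disk summands of the next $\mathbb{F}_{n-1}$, producing a bounded-above subcomplex with finitely generated free entries. Iterating this and taking unions at limit stages yields a continuous chain $0 \subsetneq \mathbb{X}^0 \subsetneq \mathbb{X}^1 \subsetneq \cdots$ with each successive quotient in $\class{S}$ and $\mathbb{F} = \bigcup_\alpha \mathbb{X}^\alpha$. That construction --- not the disk shortcut --- is what your proposal is missing, and it is the technical heart of the theorem.
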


\begin{proof}
Since $\textnormal{Ch}(\ch)$ is a Grothendieck category with enough projectives it follows from~\cite[Corollary~6.8]{hovey} that $\class{S}$ cogenerates a functorially complete cotorsion pair
$(\leftperp{(\rightperp{S})}, \rightperp{S})$ in $\textnormal{Ch}(\ch)$. Setting $\class{C} = \leftperp{(\rightperp{S})}$, our aim is to show that this is precisely the class of all complexes of projective complexes. We note that $S$ contains a set of projective generators for $\textnormal{Ch}(\ch)$; for example, the set $\{D^n(D^m(R))\}_{n,m\in \Z}$. So it follows from~\cite[Corollary~6.9/Proof of~6.5]{hovey} that
$\leftperp{(\rightperp{S})}$ is precisely the class of all
direct summands of transfinite extensions of objects in
$S$. By Lemma~\ref{lemma-complexes of projectives are retracts of complexes of
frees} we only need to show that a complex of free
complexes is a transfinite extension of bounded above complexes of
finitely generated free complexes.  So let $\mathbb{F}$ be a complex of free
complexes and write each $\mathbb{F}_n = \oplus_{i \in I_n}D^{n_i}(R_i)$ for some indexing set $I_n$ with each $R_i = R$. We assume that $\mathbb{F}$ is not (isomorphic to) a complex in $\class{S}$, so we can certainly find
a nonzero $\mathbb{F}_n$, and we then take just one summand $D^{n_j}(R_j)$ for some $j \in
I_n$. We start to build a bounded above subcomplex $\mathbb{X} \subseteq \mathbb{F}$ by
setting $\mathbb{X}_n = D^{n_j}(R_j)$ and setting $\mathbb{X}_i = 0$ for all $i > n$.
Now note $d(D^{n_j}(R_j)) \subseteq \oplus_{i \in I_{n-1}}  D^{n_i}(R_i)$ and that we can find a \emph{finite}
subset $L_{n-1} \subseteq I_{n-1}$ such that $d(D^{n_j}(R_j)) \subseteq \oplus_{i \in L_{n-1}}
 D^{n_i}(R_i)$. We set $\mathbb{X}_{n-1} = \oplus_{i \in L_{n-1}} D^{n_i}(R_i)$.
We can continue down in the same way
finding $L_{n-2} \subseteq I_{n-2}$ with $|L_{n-2}|$ finite and with
$d(\oplus_{i \in L_{n-1}}D^{n_i}(R_i)) \subseteq \oplus_{i \in L_{n-2}}
D^{n_i}(R_i)$.
In this way we can construct the subcomplex $$\mathbb{X} = \cdots
\xrightarrow{} 0 \xrightarrow{} D^{n_j}(R_j) \xrightarrow{} \oplus_{i \in
L_{n-1}} D^{n_i}(R_i) \xrightarrow{} \oplus_{i \in L_{n-2}} D^{n_i}(R_i) \xrightarrow{}
\cdots $$
Note that $\mathbb{X}$ is a nonzero bounded above complex of finitely
generated free complexes. That is, $\mathbb{X} \in \class{S}$.

We set $\mathbb{X}^0 = \mathbb{X}$. To finish the proof we will argue that we can write $\mathbb{F}$ as the union
of a continuous chain $0 \neq \mathbb{X}^0 \subsetneq \mathbb{X}^1 \subsetneq \cdots \subsetneq \mathbb{X}^{\alpha} \subsetneq \mathbb{X}^{\alpha +1} \subsetneq \cdots $ with each $\mathbb{X}^{\alpha +1}/\mathbb{X}^{\alpha} \in \class{S}$. So we consider $\mathbb{F}/\mathbb{X}^0$, and in the same way as above we find a bounded above subcomplex $\mathbb{X}^1/\mathbb{X}^0 \subseteq \mathbb{F}/\mathbb{X}^0$ consisting of finitely generated free complexes in each degree. However, we are careful to construct $\mathbb{X}^1$ as follows: Let $L^0_{n}$ denote the indexing sets of the previously constructed complex $\mathbb{X}^0_n = \oplus_{i \in L^0_{n}} D^{n_i}(R_i)$.
Note that we can
identify the quotient $\mathbb{F}/\mathbb{X}^0$ with a complex whose degree $n$
entry is $\oplus_{i \in I_n-L^0_n} D^{n_i}(R_i)$. And so we may take $\mathbb{X}^1$ so that $\mathbb{X}^1_n = \oplus_{i \in L^1_{n}} D^{n_i}(R_i)$ with each $L^1_n$ finite and $L^0_n \subseteq L^1_n \subseteq I_n$ for each $n$.
Continuing this process, we
continue to construct an increasing union
$0 \neq \mathbb{X}^0 \subsetneq \mathbb{X}^1 \subsetneq \mathbb{X}^2 \subsetneq \cdots $ corresponding to a nested union of
subsets $L^0_n \subseteq L^1_n \subseteq L^2_n \subseteq \cdots$ for
each $n$.
Assuming this process doesn't terminate we set $\mathbb{X}^{\omega} =
\cup_{\alpha < \omega} \mathbb{X}^{\alpha}$ and note that $\mathbb{X}^{\omega}_n =
\oplus_{i \in L^{\omega}_n} D^{n_i}(R_i)$ where $L^{\omega}_n =
\cup_{\alpha < \omega} L^{\alpha}_n$.
Of course $\mathbb{X}^{\omega}$ and
$\mathbb{F}/\mathbb{X}^{\omega}$ are complexes of free complexes and continuing this process we can
obtain an ordinal $\lambda$ and a continuous chain
$0 \neq \mathbb{X}^0 \subsetneq \mathbb{X}^1 \subsetneq \cdots \subsetneq \mathbb{X}^{\alpha} \subsetneq \mathbb{X}^{\alpha +1} \subsetneq \cdots $
with $\mathbb{F} = \cup_{\alpha <
\lambda} \mathbb{X}^{\alpha}$ and $\mathbb{X}^0, \mathbb{X}^{\alpha +1}/\mathbb{X}^{\alpha} \in \class{S}$.
\end{proof}

\subsection{Pure exact complexes of complexes} Let $\class{E} : 0 \xrightarrow{} X \xrightarrow{} Y \xrightarrow{} Z \xrightarrow{} 0$ be a short exact sequence in $\ch$. We say that $\class{E}$ is \emph{pure} if for each finitely presented complex $F$, the sequence of abelian groups $\Hom_{\ch}(F,\class{E})$, that is $$0 \xrightarrow{} \Hom_{\ch}(F,X) \xrightarrow{} \Hom_{\ch}(F,Y) \xrightarrow{} \Hom_{\ch}(F,Z) \xrightarrow{} 0,$$ is also exact. Pure exact sequences of complexes can be characterized in terms of the functors $\overline{\homcomplex}$ and $\overline{\otimes}$ from  Section~\ref{subsec-modified hom and tensor}. In particular, $\class{E}$ is pure exact if and only if $\overline{\homcomplex}(F,\class{E})$ is a short exact sequence of complexes for each finitely presented complex $F$ if and only if $F \overline{\otimes} \class{E}$ is a short exact sequence of complexes for each finitely presented complex $F$ (or just \emph{any} complex $F$). See~\cite[Theorem~5.1.3]{garcia-rozas}. This leads us to the following notion of a pure exact complex of complexes.

\begin{definition}\label{def-pure exact complexes}
Let $\mathbb{X} \in \textnormal{Ch}(\ch)$. We say that $\mathbb{X}$ is a \textbf{pure exact complex of complexes} if the following equivalent conditions are satisfied:
\begin{enumerate}
\item $\Hom_{\ch}(F,\mathbb{X}) = \homcomplex(S^0(F),\mathbb{X})$ is an exact complex of abelian groups for each finitely presented chain complex $F$.
\item $\overline{\homcomplex}(F,\mathbb{X})$  is an exact complex (of complexes of abelian groups) for each finitely presented chain complex $F$.
\item $F \overline{\otimes} \mathbb{X}$ is an exact complex (of complexes of abelian groups) for each finitely presented chain complex $F$.
\item $F \overline{\otimes} \mathbb{X}$ is an exact complex (of complexes of abelian groups) for each chain complex $F$.
\end{enumerate}
\end{definition}

\begin{lemma}\label{lem-pure}
Suppose $\mathbb{X}$ is a bounded complex of finitely
presented complexes and $\mathbb{Y}$ is a pure exact complex of complexes.  Then every chain
map $f\mathcolon \mathbb{X} \xrightarrow{} \mathbb{Y}$ is chain homotopic to $0$.
\end{lemma}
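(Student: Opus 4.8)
The plan is to prove the lemma by induction on the number of nonzero components of $\mathbb{X}$, with the base case handed to us directly by clause (1) of the definition of a pure exact complex of complexes. So suppose first that $\mathbb{X} = S^m(F)$ is a sphere on a finitely presented chain complex $F$ (equivalently, $\mathbb{X}$ is concentrated in a single outer degree). Applying the internal hom of $\textnormal{Ch}(\ch)$, one computes that $\homcomplex(S^0(F),\mathbb{Y})$ is the complex whose degree-$k$ term is $\Hom_{\ch}(F,\mathbb{Y}_k)$, i.e. it is $\Hom_{\ch}(F,\mathbb{Y})$, and this is exact by Definition~\ref{def-pure exact complexes}(1) applied to $\mathbb{Y}$. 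Since $\Sigma^n S^0(F) = S^n(F)$, Lemma~\ref{lemma-homcomplex-basic-lemma} — whose statement is purely formal and applies verbatim with $\cat{A} = \ch$ — then tells us that every chain map $S^n(F) \to \mathbb{Y}$ is chain homotopic to $0$. This covers the length-one case, and the length-zero case ($\mathbb{X} = 0$) is trivial.

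For the inductive step, I would take $\mathbb{X}$ concentrated in outer degrees $[a,b]$ with $a<b$. Since $d^{\mathbb{X}}_a = 0$, the sphere $S^a(\mathbb{X}_a)$ is a subcomplex of $\mathbb{X}$, yielding a short exact sequence $0 \to S^a(\mathbb{X}_a) \to \mathbb{X} \xrightarrow{p} \mathbb{X}' \to 0$ in $\textnormal{Ch}(\ch)$, where $\mathbb{X}'$ is the brutal truncation of $\mathbb{X}$ that discards the degree-$a$ component; it is again a bounded complex of finitely presented complexes, of strictly smaller length. Given a chain map $f \mathcolon \mathbb{X} \to \mathbb{Y}$, the base case makes its restriction $S^a(\mathbb{X}_a) \to \mathbb{Y}$ null homotopic, and such a homotopy is just a single map $s \mathcolon \mathbb{X}_a \to \mathbb{Y}_{a+1}$ with $f_a = d^{\mathbb{Y}}_{a+1}\circ s$. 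Extending $s$ by zero to a degree-$+1$ map $\mathbb{X} \to \mathbb{Y}$ and subtracting the resulting boundary replaces $f$ by a chain-homotopic chain map $g$ that vanishes in all degrees $\le a$. A short check (the only nontrivial point being compatibility with the differential in degree $a+1$, which holds because $g$ is a chain map vanishing in degree $a$) shows that $g$ factors as $g = g'\circ p$ for a chain map $g' \mathcolon \mathbb{X}' \to \mathbb{Y}$. By the induction hypothesis $g'$ is null homotopic, and composing that homotopy with $p$ exhibits $g$, and hence $f$, as null homotopic.

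I expect there to be no serious obstacle here — the one genuinely load-bearing input is the exactness of $\Hom_{\ch}(F,\mathbb{Y})$ for finitely presented $F$, which is literally clause (1) of Definition~\ref{def-pure exact complexes}, so in particular no separate purity analysis of the cycle subcomplexes $Z_n\mathbb{Y}$ is required. The only mildly delicate part is the bookkeeping in the inductive step: confirming that $S^a(\mathbb{X}_a)$ really is a subcomplex (hence $\mathbb{X}'$ a genuine quotient complex in $\textnormal{Ch}(\ch)$), that $\mathbb{X}'$ stays within the class of bounded complexes of finitely presented complexes, and that the modified map $g$ descends along $p$. All of this is routine diagram chasing, so I would state it concisely rather than belabor it.
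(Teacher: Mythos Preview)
Your proof is correct and shares the paper's overall strategy: induct on the length of $\mathbb{X}$, with the sphere case handled directly by clause~(1) of Definition~\ref{def-pure exact complexes}. The mechanics differ in two small ways. First, the paper peels off the \emph{top} component, obtaining a degreewise split sequence $0 \to \mathbb{A} \to \mathbb{X} \to S^m(\mathbb{X}_m) \to 0$ with the sphere as quotient, whereas you peel off the \emph{bottom} component, with $S^a(\mathbb{X}_a)$ as a subcomplex. Second, the paper avoids the explicit homotopy bookkeeping entirely: it applies $\homcomplex(-,\mathbb{Y})$ to its split sequence, notes that $\homcomplex(S^m(\mathbb{X}_m),\mathbb{Y})$ is exact, and reads off the result from the long exact sequence in homology. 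Your hands-on homotopy modification works just as well, and your care in checking that $g$ descends along $p$ is exactly the point that the paper's degreewise-split-plus-long-exact-sequence argument absorbs silently. Either truncation direction would support either argument style; the paper's version is simply a line or two shorter.
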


\begin{proof}
Note that it is equivalent to show $\homcomplex(\mathbb{X},\mathbb{Y})$ is exact, where $\homcomplex$ is the usual hom-complex of Section~\ref{subsec-complexes} (but applied to the case of $\cat{A} = \ch$ instead of the more typical application of $\cat{A} = \rmod$).

Let $m$ be the largest degree $i$ for which
$\mathbb{X}_{i}$ is nonzero, and let $\mathbb{A}$ be the subcomplex of $\mathbb{X}$ with
$\mathbb{A}_{i}=\mathbb{X}_{i}$ for $i<m$ and $\mathbb{A}_{m}=0$.  We have a degreewise split short exact sequence
\[
0 \xrightarrow{} \mathbb{A} \xrightarrow{} \mathbb{X} \xrightarrow{}  S^{m}(\mathbb{X}_{m})
\xrightarrow{} 0
\]
which for the given $\mathbb{Y}$ induces the short exact sequence
\[
0 \xrightarrow{}\homcomplex (S^{m}(\mathbb{X}_{m}),\mathbb{Y}) \xrightarrow{} \homcomplex (\mathbb{X},\mathbb{Y})
\xrightarrow{} \homcomplex (\mathbb{A},\mathbb{Y}) \xrightarrow{} 0.
\]
Now $\homcomplex (S^{m}(\mathbb{X}_{m}),\mathbb{Y})$ is exact by condition (1) of Definition~\ref{def-pure exact complexes}. The long exact sequence in homology now gives us the result by induction on $m$.
\end{proof}

Let $(\class{C},\class{W})$ denote the cotorsion pair of Theorem~\ref{thm-bounded above complexes of finitely generated frees cogenerate}. The result below tells us that pure exact complexes of complexes are in $\class{W}$.

\begin{theorem}\label{thm-pure}
Let $R$ be any ring and let $\mathbb{C}$ be a complex of projective complexes.
If $\mathbb{Y}$ is a pure exact complex of complexes, then $\homcomplex (\mathbb{C},\mathbb{Y})$
is exact, or, equivalently $\mathbb{Y} \in \class{W} = \rightperp{\class{C}}$.
\end{theorem}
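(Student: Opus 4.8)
**The plan is to prove Theorem~\ref{thm-pure} by combining Theorem~\ref{thm-bounded above complexes of finitely generated frees cogenerate}, Lemma~\ref{lem-pure}, and Eklof's lemma (the dual version for homology of hom-complexes).** First I would observe the equivalence asserted in the statement: since $(\class{C},\class{W})$ with $\class{C}$ the class of all complexes of projective complexes is a cotorsion pair, and since $\homcomplex(\mathbb{C},\mathbb{Y})$ being exact is controlled (via Lemma~\ref{lemma-homcomplex-basic-lemma} applied in $\cat{A} = \ch$) by the vanishing of $\Ext^1_{dw}$, hence a fortiori relates to $\Ext^1_{\textnormal{Ch}(\ch)}$, one reduces to showing that a pure exact complex of complexes $\mathbb{Y}$ lies in $\rightperp{\class{C}}$. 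Actually the cleanest route is: show directly that $\homcomplex(\mathbb{C},\mathbb{Y})$ is exact for every complex of projective complexes $\mathbb{C}$, and then note this forces $\mathbb{Y} \in \class{W}$ because $\class{W} = \rightperp{\class{C}}$ and $\Ext^1$-vanishing against all of $\class{C}$ — in particular against the generating set $\class{S}$ of bounded-above complexes of finitely generated free complexes — follows from the hom-complex being exact (using that for $\mathbb{S} \in \class{S}$, degreewise-split extensions are cofinal, or more simply that $\mathbb{S}$ is built from disks so $\Ext^1$ is computed by the hom-complex).

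Next, for the core exactness claim, I would first handle $\mathbb{C} = \mathbb{S} \in \class{S}$, i.e.\ a bounded above complex of finitely generated free complexes. Here Lemma~\ref{lem-pure} does not quite apply directly since it assumes $\mathbb{X}$ is \emph{bounded} (above and below), but one reduces the bounded-above case to the bounded case by a colimit/telescope argument: $\mathbb{S}$ is the union of its brutal truncations $\mathbb{S}_{\geq -k}$, each of which is a bounded complex of finitely generated (hence finitely presented) free complexes, so Lemma~\ref{lem-pure} gives that each $\homcomplex(\mathbb{S}_{\geq -k}, \mathbb{Y})$ is exact; passing to the inverse limit over $k$ — which is a limit of surjections with exact terms, hence has vanishing $\lim^1$ — yields exactness of $\homcomplex(\mathbb{S},\mathbb{Y})$. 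Then, for a general complex of free complexes $\mathbb{F}$, Theorem~\ref{thm-bounded above complexes of finitely generated frees cogenerate} writes $\mathbb{F}$ as a continuous transfinite extension $\mathbb{F} = \bigcup_{\alpha < \lambda} \mathbb{X}^\alpha$ with $\mathbb{X}^0, \mathbb{X}^{\alpha+1}/\mathbb{X}^\alpha \in \class{S}$. Applying $\homcomplex(-,\mathbb{Y})$ turns this into a tower of surjections (each $\homcomplex(\mathbb{X}^{\alpha+1},\mathbb{Y}) \twoheadrightarrow \homcomplex(\mathbb{X}^\alpha,\mathbb{Y})$ with kernel $\homcomplex(\mathbb{X}^{\alpha+1}/\mathbb{X}^\alpha,\mathbb{Y})$ exact by the $\class{S}$-case), so by the Eklof-type argument for inverse towers (induction on $\alpha$, with $\lim^1 = 0$ at limit stages since the tower is surjective) $\homcomplex(\mathbb{F},\mathbb{Y})$ is exact. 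Finally, for an arbitrary complex of projective complexes $\mathbb{C}$, Lemma~\ref{lemma-complexes of projectives are retracts of complexes of frees} exhibits $\mathbb{C}$ as a direct summand of such an $\mathbb{F}$, and exactness of $\homcomplex(\mathbb{F},\mathbb{Y})$ passes to the summand $\homcomplex(\mathbb{C},\mathbb{Y})$.

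**I expect the main obstacle to be the bounded-above-to-bounded reduction and the transfinite-extension step, both of which require a genuine $\lim^1$-vanishing / dual-Eklof argument rather than the usual (covariant) Eklof's lemma.** The subtlety is that $\homcomplex(-,\mathbb{Y})$ is contravariant, so transfinite extensions get turned into inverse limits of towers; exactness is preserved only because the connecting maps are surjective, making the relevant $\lim^1$ terms vanish — this is exactly the hypothesis-checking that needs care. A secondary point to get right is the precise translation between ``$\homcomplex(\mathbb{C},\mathbb{Y})$ exact for all $\mathbb{C} \in \class{C}$'' and ``$\mathbb{Y} \in \rightperp{\class{C}}$'': one uses that for $\mathbb{S} \in \class{S}$ (built degreewise from disks $D^n(D^m(R))$), every short exact sequence $0 \to \mathbb{Y} \to \mathbb{Z} \to \mathbb{S} \to 0$ in $\textnormal{Ch}(\ch)$ is automatically degreewise split — because the components of $\mathbb{S}$ are projective complexes — so $\Ext^1_{\textnormal{Ch}(\ch)}(\mathbb{S},\mathbb{Y}) = \Ext^1_{dw}(\mathbb{S},\mathbb{Y})$, which by the analogue of Lemma~\ref{lemma-homcomplex-basic-lemma} is a homology group of $\homcomplex(\mathbb{S},\mathbb{Y})$, hence zero. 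Closing this loop gives $\mathbb{Y} \in \rightperp{\class{S}} = \rightperp{\class{C}} = \class{W}$, as desired.
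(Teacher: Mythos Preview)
Your proposal is correct and complete, but it takes a different route from the paper in the key step of handling bounded-above complexes $\mathbb{S} \in \class{S}$. The paper does \emph{not} reduce the bounded-above case to the bounded case via brutal truncations and an inverse limit. Instead, given a chain map $f \colon \mathbb{S} \to \mathbb{Y}$, the paper constructs the null-homotopy $D_n$ directly by downward induction on $n$: starting with $D_n = 0$ for large $n$, at each step it replaces $D_n$ by a corrected $\widetilde{D}_n$ and produces $D_{n-1}$ by applying Lemma~\ref{lem-pure} to a carefully chosen \emph{three-term} bounded complex (built from $\mathbb{C}_n/B_n\mathbb{C}$, $\mathbb{C}_{n-1}$, and $\mathbb{C}_{n-1}/B_{n-1}\mathbb{C}$) mapping into $\mathbb{Y}$. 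Thus Lemma~\ref{lem-pure} is invoked locally, one degree at a time, rather than globally on a truncation.

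The paper also avoids your step~3 (the transfinite dual-Eklof argument) entirely: once exactness of $\homcomplex(\mathbb{S},\mathbb{Y})$ is known for all $\mathbb{S} \in \class{S}$, the cotorsion-pair equality $\rightperp{\class{S}} = \rightperp{\class{C}}$ from Theorem~\ref{thm-bounded above complexes of finitely generated frees cogenerate} immediately gives the result for all $\mathbb{C} \in \class{C}$ (using, as you note, that projective components force $\Ext^1 = \Ext^1_{dw}$). Your route rederives this via an explicit transfinite surjective-tower argument, which is valid but more laborious. In short: your approach is more categorical (Milnor sequences, dual Eklof), while the paper's is more constructive and elementary, trading the limit machinery for the clever local three-term complex trick.
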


\begin{proof}
 In view of Theorem~\ref{thm-bounded
above complexes of finitely generated frees cogenerate}, it suffices to assume
that $\mathbb{C}$ is a bounded above complex of finitely generated free
complexes and  to show that any chain map $f\mathcolon
\mathbb{C} \xrightarrow{} \mathbb{Y}$ is chain homotopic to $0$.  We construct a chain
homotopy $D_{n}\mathcolon \mathbb{C}_{n}\xrightarrow{} \mathbb{Y}_{n+1}$ with
$dD_{n}+D_{n-1}d=f_{n}$ by downwards induction on $n$.  Since $\mathbb{C}$ is
bounded above, we can take $D_{n}=0$ for large $n$ to begin the
induction.  So we suppose that $D_{i}$ has been defined for $i\geq n$
and that $dD_{n+1}+D_{n}d=f_{n+1}$.

The idea is to replace $D_{n}$ with a new map $\widetilde{D}_{n}$ still having $dD_{n+1}+\widetilde{D}_{n}d=f_{n+1}$, and to also find a map $D_{n-1}$ such that $d\widetilde{D}_{n} +D_{n-1}d=f_{n}$. First note that
\[
(f_{n}-dD_{n})d = d (f_{n+1}-D_{n}d)=d^{2}D_{n+1}=0,
\]
so there is an induced map $g_{n}\mathcolon
\mathbb{C}_{n}/B_{n}\mathbb{C}\xrightarrow{}\mathbb{Y}_{n}$, such that the composite $\mathbb{C}_n \xrightarrow{\pi} \mathbb{C}_{n}/B_{n}\mathbb{C}\xrightarrow{g_n} \mathbb{Y}_{n}$ equals $f_n - dD_n$.  Now consider the bounded complex
$\mathbb{X}$ of finitely presented complexes with $\mathbb{X}_{n}=\mathbb{C}_{n}/B_{n}\mathbb{C}$,
$\mathbb{X}_{n-1}=\mathbb{C}_{n-1}$, $\mathbb{X}_{n-2}=\mathbb{C}_{n-1}/B_{n-1}\mathbb{C}$, and $\mathbb{X}_{i}=0$ for all
other $i$.  There is a chain map $g\mathcolon \mathbb{X} \xrightarrow{} \mathbb{Y}$ that
is $g_{n}$ in degree $n$, $f_{n-1}$ in degree $n-1$, and $f_{n-2}\bar{d}$ in
degree $n-2$.  By Lemma~\ref{lem-pure}, this chain map must be chain
homotopic to $0$.  This gives us maps $D_{n}'\mathcolon
\mathbb{C}_{n}/B_{n}\mathbb{C}\xrightarrow{}\mathbb{Y}_{n+1}$ and $D_{n-1}\mathcolon
\mathbb{C}_{n-1}\xrightarrow{}\mathbb{Y}_{n}$ such that
$dD_{n}'+D_{n-1}\bar{d} = g_n$. Upon composing with $\mathbb{C}_n \xrightarrow{\pi} \mathbb{C}_{n}/B_{n}\mathbb{C}$ this becomes $d D'_n \pi + D_{n-1}d = f_{n}-dD_{n}$.
Now setting $\widetilde{D}_{n} = D_n + D_{n}'\pi$,
we still have the required relation $dD_{n+1} + \widetilde{D}_{n}d=f_{n+1}$ because $dD_{n+1} + ( D_n + D_{n}'\pi)d = dD_{n+1} + D_nd + 0 = f_{n+1}$.
Moreover, we have the other promised relation: $d \widetilde{D}_{n}+D_{n-1}d =f_{n}$. Indeed $d \widetilde{D}_{n}+D_{n-1}d = d(D_n + D'_n \pi) + D_{n-1}d =dD_n + (dD'_n \pi + D_{n-1}d) =  dD_n + (f_{n}-dD_{n}) = f_n$.
\end{proof}

Recall that for a complex $X$ we have its character dual $X^+ = \Hom_{\Z}(X,\Q)$. See Section~\ref{subsec-character duality}.

\begin{lemma}\label{lemma-dual-exact}
Let $\mathbb{C}$ be a complex of complexes, and $X$ a complex of right $R$-modules.
Then $X \overline{\otimes} \mathbb{C}$ is exact if and only if $\overline{\homcomplex}(\mathbb{C},X^+)$ is
exact.
\end{lemma}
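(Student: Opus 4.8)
The plan is to reduce the statement to the well-known adjunction isomorphism between the tensor product and the character dual, carried out one complex at a time, and then assembled over the second (outer) grading. First I would recall the classical module-level fact: for a right $R$-module $M$ and a left $R$-module $N$, there is a natural isomorphism $(M \otimes_R N)^+ \cong \Hom_R(N, M^+)$, where $(-)^+ = \Hom_{\Z}(-,\Q)$. Since $\Q$ is an injective cogenerator of abelian groups, $(-)^+$ is faithfully exact, so a complex of abelian groups is exact if and only if its character dual is. The goal is therefore to produce a natural isomorphism of complexes (of complexes of abelian groups)
\[
(X \overline{\otimes} \mathbb{C})^+ \;\cong\; \overline{\homcomplex}(\mathbb{C}, X^+),
\]
after which exactness of one side is equivalent to exactness of the other by faithful exactness of $(-)^+$.

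The key steps, in order. (1) Unwind the definition of $\overline{\otimes}$ from Section~\ref{subsec-modified hom and tensor}: in outer degree $n$, $(X \overline{\otimes} \mathbb{C})_n = (X \otimes \mathbb{C})_n / B_n(X \otimes \mathbb{C})$, which is a complex of abelian groups (the inner grading coming from $X$ being a complex), with the induced differential $\overline{x \otimes c} \mapsto \overline{dx \otimes c}$. Dualizing, $(X\overline{\otimes}\mathbb{C})^+$ in outer degree $n$ is the subobject of $(X\otimes \mathbb{C})_n^+$ consisting of those functionals vanishing on $B_n(X\otimes\mathbb{C})$; equivalently, since $(-)^+$ is exact and turns the right-exact sequence $B_n \hookrightarrow (X\otimes\mathbb{C})_n \twoheadrightarrow (X\overline\otimes\mathbb{C})_n$ into a left-exact one, $(X\overline\otimes\mathbb{C})_n^+$ is the kernel of $(X\otimes\mathbb{C})_n^+ \to B_n(X\otimes\mathbb{C})^+$. (2) Unwind $\overline{\homcomplex}(\mathbb{C},X^+)$: by the definition in Section~\ref{subsec-modified hom and tensor}, its outer degree $n$ component is $\Hom_{\ch}(\mathbb{C}, \Sigma^{-n} X^+) = Z_n \homcomplex(\mathbb{C},X^+)$, i.e.\ chain maps $\mathbb{C} \to \Sigma^{-n}X^+$, again naturally a complex of abelian groups in the inner grading. (3) Apply the module-level adjunction degreewise and check that the cycle/kernel conditions match up: a cycle in $\homcomplex(\mathbb{C},X^+)$ is exactly a chain map, and under the tensor-hom adjunction $\prod_k \Hom(\mathbb{C}_k, (X^+)_{k+n}) \cong \prod_k \Hom(\mathbb{C}_k, \Hom_{\Z}(X_{?}, \Q)) \cong \big(\bigoplus_k \mathbb{C}_k \otimes X_{?}\big)^+$, the condition of being a cocycle (commuting with differentials) translates precisely into the condition of vanishing on the appropriate boundary subgroup $B_n(X\otimes\mathbb{C})$. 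So the two descriptions in (1) and (2) agree. (4) Finally, check that the two differentials in the outer grading correspond: the outer differential on $X\overline\otimes\mathbb{C}$ is induced by $d_X$, and on $\overline{\homcomplex}(\mathbb{C},X^+)$ the differential $\lambda$ is $(\lambda f)_k = (-1)^n d_{k+n} f_k$; dualizing the former and comparing signs under the isomorphism from (3) gives a match (the sign bookkeeping is exactly the Koszul sign in the definition of $X\otimes Y$ and of $\delta_n$ in Section~\ref{subsec-complexes}, so it is forced to work out). Conclude by faithful exactness of $(-)^+$.

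The main obstacle I expect is step (3)–(4): keeping the two gradings straight and tracking the Koszul signs so that the tensor-hom adjunction genuinely identifies cocycles with boundary-annihilating functionals, and outer differentials with outer differentials, rather than merely identifying the underlying graded objects. This is the kind of bicomplex bookkeeping where it is easy to be off by a sign or a shift; the cleanest way to organize it is probably to first establish the isomorphism $(X\otimes\mathbb{C})^+ \cong \homcomplex(\mathbb{C}, X^+)$ of \emph{bicomplexes} (using \cite[Sign Trick~1.2.5]{weibel} as suggested in Section~\ref{sec-complexes of projective complexes}), where the statement is just the iterated tensor-hom adjunction, and only then pass to the "barred" versions by taking the appropriate quotient on one side and the appropriate subobject (cycles) on the other, observing these are exchanged by $(-)^+$. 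Everything else is routine.
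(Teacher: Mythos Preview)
Your proposal is correct and targets exactly the right isomorphism, $(X \overline{\otimes} \mathbb{C})^+ \cong \overline{\homcomplex}(\mathbb{C}, X^+)$, followed by faithful exactness of $(-)^+$. The paper does the same, but obtains that isomorphism in one line by quoting the already-established $\overline{\otimes}$--$\overline{\homcomplex}$ adjunction from \cite[Prop.~4.2.1]{garcia-rozas}: first identify $(-)^+ = \Hom_{\Z}(-,\Q)$ with $\overline{\homcomplex}(-,D^1(\Q))$, then apply the adjunction $\overline{\homcomplex}(X \overline{\otimes} \mathbb{C}_n, D^1(\Q)) \cong \overline{\homcomplex}(\mathbb{C}_n, \overline{\homcomplex}(X, D^1(\Q)))$ termwise in the outer grading, and finally identify $\overline{\homcomplex}(X, D^1(\Q))$ back with $X^+$. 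Naturality assembles these into the desired isomorphism of complexes of complexes.

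So the difference is purely one of packaging: you rebuild the tensor--hom adjunction for the modified functors by hand at the bicomplex level (with the attendant Koszul-sign bookkeeping you anticipate in steps (3)--(4)), whereas the paper simply cites it. Your route is more self-contained; the paper's is shorter and avoids any sign-chasing. If you want to streamline your write-up, the shortcut is to observe that your steps (1)--(4) are exactly a proof of the adjunction for $\overline{\otimes}$ and $\overline{\homcomplex}$ specialized to the target $D^1(\Q)$, and that this adjunction is already on record.
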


\begin{proof}
$X \overline{\otimes} \mathbb{C}$ is exact if and only if
$(X \overline{\otimes} \mathbb{C})^{+}$ is exact. But also using parts (1) and (6)~b) of~\cite[Proposition~4.2.1]{garcia-rozas} we get
\[
(X \overline{\otimes} \mathbb{C})^{+} =  \Hom_{\Z}(X \overline{\otimes} \mathbb{C},\Q) = \overline{\homcomplex}(X \overline{\otimes} \mathbb{C},D^1(\Q))
\]
\[
\cong  \overline{\homcomplex}(\mathbb{C}, \overline{\homcomplex}(X,D^1(\Q))) \cong \overline{\homcomplex}(\mathbb{C},X^+).
\]
\end{proof}

We need just one more lemma before proving the main theorem.

\begin{lemma}\label{lemma-crazy-complexes}
Let $Y \in \ch$ be a complex, and $\mathbb{C} \in \textnormal{Ch}(\ch)$ be a complex of complexes . Then $\overline{\homcomplex}(\mathbb{C},Y)$ is exact if and only if $\homcomplex(\mathbb{C},S^n(Y))$ is exact for each $n$.
\end{lemma}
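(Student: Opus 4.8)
The plan is to prove the equivalence by unwinding both sides to completely explicit complexes and then checking that they agree after re-indexing; there is no homological input beyond the fact that exactness in the abelian category $\ch$ is tested in each internal degree, together with the shift identities already recorded for $\homcomplex$ and $\overline{\homcomplex}$.

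First I would write out $\overline{\homcomplex}(\mathbb{C},Y)$. Applying the functor $\overline{\homcomplex}(-,Y)\colon\ch^{\mathrm{op}}\to\ch$ to the components of $\mathbb{C}$, with connecting maps induced by $d^{\mathbb{C}}$, yields an object of $\Ch(\ch)$; using that the degree-$m$ term of $\overline{\homcomplex}(A,B)$ is $\Hom_{\ch}(A,\Sigma^{-m}B)$, its component in outer degree $n$ and internal degree $m$ is $\Hom_{\ch}(\mathbb{C}_n,\Sigma^{-m}Y)$, the outer differential being precomposition with $d^{\mathbb{C}}$ and the internal differential the map $\lambda$ of the definition. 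Since kernels and images in $\ch$ are formed internal-degreewise, $\overline{\homcomplex}(\mathbb{C},Y)$ is exact precisely when, for each fixed $m\in\Z$, the complex of abelian groups $\bigl[\,n\mapsto\Hom_{\ch}(\mathbb{C}_n,\Sigma^{-m}Y)\,\bigr]$ is exact.

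Next I would write out $\homcomplex(\mathbb{C},S^n(Y))$ directly from $\homcomplex(\mathbb{C},\mathbb{D})_j=\prod_k\Hom_{\ch}(\mathbb{C}_k,\mathbb{D}_{k+j})$: since $S^n(Y)$ is concentrated in degree $n$, this collapses to $\Hom_{\ch}(\mathbb{C}_{n-j},Y)$ in degree $j$, with differential again precomposition with $d^{\mathbb{C}}$; so up to re-indexing $\homcomplex(\mathbb{C},S^n(Y))$ is the complex $\bigl[\,p\mapsto\Hom_{\ch}(\mathbb{C}_p,Y)\,\bigr]$. To finish I would reconcile this with the complexes of the previous paragraph. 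Because $S^n(Y)=\Sigma^nS^0(Y)$ and $\homcomplex$ carries suspensions in the second variable to suspensions, exactness of $\homcomplex(\mathbb{C},S^n(Y))$ is independent of $n$; and running the same computation with $Y$ replaced by $\Sigma^{-m}Y$ shows that the internal-degree-$m$ slice of $\overline{\homcomplex}(\mathbb{C},Y)$ is, up to re-indexing, $\homcomplex(\mathbb{C},S^{0}(\Sigma^{-m}Y))$, whose exactness matches that of $\homcomplex(\mathbb{C},S^{-m}(Y))$ after absorbing the internal suspension into the sphere degree inside $\Ch(\ch)$. Combining these, $\overline{\homcomplex}(\mathbb{C},Y)$ is exact iff $\homcomplex(\mathbb{C},S^n(Y))$ is exact for all $n$.

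The only step that needs care is the last one. There are two distinct shifts in play — the internal suspension $\Sigma^{-m}$ built into the definition of $\overline{\homcomplex}$, and the external suspension (equivalently, the sphere degree) in $\Ch(\ch)$ — and the substance of the lemma is exactly that these can be traded for one another after re-indexing, with the sign $(-1)^n$ appearing in $\lambda_n$ cancelling the Koszul sign in the differential of $\homcomplex$. I expect this bookkeeping (signs and the two indexings) to be the entire content of the proof; once the two families of complexes are matched degree by degree, the stated equivalence is immediate.
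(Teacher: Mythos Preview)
Your approach is the same as the paper's: unwind $\overline{\homcomplex}(\mathbb{C},Y)$ into its internal-degree slices $[\,i\mapsto\Hom_{\ch}(\mathbb{C}_i,\Sigma^{-m}Y)\,]$, unwind $\homcomplex(\mathbb{C},S^n(Y))$ into (a reindexing of) $[\,i\mapsto\Hom_{\ch}(\mathbb{C}_i,Y)\,]$, and match. You have also correctly isolated the one substantive step, namely ``absorbing the internal suspension into the sphere degree''; the paper's own proof passes over exactly this point, writing $\Sigma^{-n}\mathbb{C}$ in its second paragraph without distinguishing internal from external suspension.

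The trouble is that this step is not bookkeeping --- it actually fails. The objects $S^0(\Sigma^{-m}Y)$ and $S^{-m}(Y)$ of $\textnormal{Ch}(\ch)$ are genuinely different, and $\homcomplex(\mathbb{C},-)$ sends them to different complexes: the first records $\Hom_{\ch}(\mathbb{C}_i,\Sigma^{-m}Y)$, the second records $\Hom_{\ch}(\mathbb{C}_i,Y)$ up to an outer shift. For a concrete failure take $\mathbb{C}=S^0(D^0(R))$, a complex of projective complexes concentrated in outer degree~$0$, and $Y=S^1(R)$. Since $\Hom_{\ch}(D^0(R),Z)\cong Z_0$, the single possibly nonzero entry of $\homcomplex(\mathbb{C},S^n(Y))$ is $Y_0=0$, so $\homcomplex(\mathbb{C},S^n(Y))=0$ for every $n$; yet $\overline{\homcomplex}(D^0(R),Y)_m\cong Y_m$, so $\overline{\homcomplex}(\mathbb{C},Y)$ is the nonzero complex $Y$ sitting in outer degree~$0$, hence not exact. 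Thus the forward implication holds (take the $m=0$ slice), but the converse does not hold for a fixed $Y$. In the paper's applications the lemma is invoked only with $Y$ ranging over a suspension-closed class $\cat{D}$, and in that quantified form --- $\overline{\homcomplex}(\mathbb{C},Y)$ exact for all $Y\in\cat{D}$ iff $\homcomplex(\mathbb{C},S^n(Y))$ exact for all $n$ and all $Y\in\cat{D}$ --- the argument you outline does go through cleanly.
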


\begin{proof}
Pondering definitions, we see $\overline{\homcomplex}(\mathbb{C},Y)$ is the cochain complex, of complexes of abelian groups,   whose degree zero entry is the complex of abelian groups
$$\cdots \xleftarrow{}   \Hom_{\ch}(\Sigma^{n-1}\mathbb{C}_0,Y)    \xleftarrow{}    \Hom_{\ch}(\Sigma^n \mathbb{C}_0,Y) \xleftarrow{} \cdots$$
(where $\Hom_{\ch}(\Sigma^n \mathbb{C}_0,Y)$ is the degree $n$ component of this complex). So to say that the overall complex is exact means that the cochain complex of abelian groups, $\Hom_{\ch}(\Sigma^n \mathbb{C},Y)$, is exact for each $n$.

On the other hand, by again following definitions, $\homcomplex(\mathbb{C},S^n(Y))$ is isomorphic to the cochain complex, $\Hom_{\ch}(\Sigma^{-n} \mathbb{C},Y)$, of abelian groups.
So exactness of $\homcomplex(\mathbb{C},S^n(Y))$ for all $n$ is equivalent to exactness of $\overline{\homcomplex}(\mathbb{C},Y)$.
\end{proof}

Now if $\cat{C}$ is a collection of chain complexes of right $R$-modules, and
$\cat{D}$ is a collection of chain complexes of left $R$-modules, we say that
$(\cat{C},\cat{D})$ is a \textbf{duality pair} if $X \in \cat{C}$ if
and only if ${X}^{+} \in  \cat{D}$, and $Y \in \cat{D}$ if and
only if $Y^{+} \in \cat{C}$.  It is immediate from Corollary~\ref{cor-duality} that the absolutely clean and level complexes give rise to two duality pairs. One where $\class{C}$ is the class of all absolutely clean complexes of right $R$-modules, and another where $\class{C}$ is the class of all level complexes of right $R$-modules.

\begin{theorem}\label{thm-dual-exact}
Suppose $(\cat{C},\cat{D})$ is a duality pair in $\ch$
such that $\cat{D}$ is closed under pure quotients.  Let $\mathbb{C}$ be a
complex of projective complexes.  Then $X \overline{\otimes} \mathbb{C}$ is exact for all $X \in
\cat{C}$ if and only if $\overline{\homcomplex}(\mathbb{C},Y)$ is exact for all $Y \in
\cat{D}$.  In particular,
\begin{enumerate}
\item $A \overline{\otimes} \mathbb{C}$ is exact for all absolutely clean complexes $A$ if and only if $\overline{\homcomplex}(\mathbb{C},L)$ is exact for all level complexes $L$.
\item $L \overline{\otimes} \mathbb{C}$ is exact for all level complexes $L$ if and only if $\overline{\homcomplex}(\mathbb{C},A)$ is exact for all absolutely clean complexes $A$.
\end{enumerate}
\end{theorem}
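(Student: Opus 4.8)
The plan is to mimic the module-level argument of \cite[Appendix~A]{bravo-gillespie-hovey}, now carried out one categorical level up, using the infrastructure assembled in this section. The forward direction is the substantive one, so I would begin there: assume $X \overline{\otimes} \mathbb{C}$ is exact for every $X \in \cat{C}$, and aim to show $\overline{\homcomplex}(\mathbb{C},Y)$ is exact for every $Y \in \cat{D}$. First I would dualize: by Lemma~\ref{lemma-dual-exact}, applied to a complex $X$ of right $R$-modules, exactness of $X \overline{\otimes} \mathbb{C}$ is equivalent to exactness of $\overline{\homcomplex}(\mathbb{C},X^+)$. So the hypothesis says $\overline{\homcomplex}(\mathbb{C},X^+)$ is exact for all $X \in \cat{C}$, i.e.\ for all complexes of the form $X^+$ with $X^+ \in \cat{D}$ (by the duality pair property). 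The goal is to upgrade this from character-dual complexes $X^+$ to arbitrary $Y \in \cat{D}$.

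The mechanism for that upgrade is the standard one: for any $Y$ there is a pure exact sequence $0 \to Y \to Y^{++} \to Y^{++}/Y \to 0$ (the canonical evaluation map $Y \to Y^{++}$ is a pure monomorphism). Since $\cat{D}$ is closed under pure quotients and $Y^{++}$ lies in $\cat{D}$ (it is the character dual of $Y^+ \in \cat{C}$), the cokernel $Y^{++}/Y$ also lies in $\cat{D}$. Iterating, one builds a right resolution $0 \to Y \to Y^{++} \to Y_1^{++} \to Y_2^{++} \to \cdots$ of $Y$ by character-dual complexes, which is moreover \emph{pure} exact. Now I would apply $\overline{\homcomplex}(\mathbb{C},-)$ to this resolution. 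Using Lemma~\ref{lemma-crazy-complexes}, exactness of $\overline{\homcomplex}(\mathbb{C},-)$ on a complex reduces to exactness of $\homcomplex(\mathbb{C},S^n(-))$ for all $n$; and by Theorem~\ref{thm-pure}, since $\mathbb{C}$ is a complex of projective complexes and $S^n$ applied to a pure exact complex of complexes is pure exact (this is immediate from the tensor characterization in Definition~\ref{def-pure exact complexes}), the functor $\homcomplex(\mathbb{C},-)$ kills the pure-exact ``tail.'' A diagram chase / dimension-shift along the resolution then forces $\overline{\homcomplex}(\mathbb{C},Y)$ to be exact, given that it is exact on each $Y_k^{++}$.

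For the converse direction, assume $\overline{\homcomplex}(\mathbb{C},Y)$ is exact for all $Y \in \cat{D}$. Given $X \in \cat{C}$, we have $X^+ \in \cat{D}$, so $\overline{\homcomplex}(\mathbb{C},X^+)$ is exact, and Lemma~\ref{lemma-dual-exact} immediately gives that $X \overline{\otimes} \mathbb{C}$ is exact. So this direction is essentially free. Finally, parts (1) and (2) follow by specializing: by Corollary~\ref{cor-duality} the pairs (absolutely clean, level) and (level, absolutely clean) are duality pairs in $\ch$, and I would need to note that both the level complexes and the absolutely clean complexes are closed under pure quotients—this is recorded in \cite{bravo-gillespie} (it follows from Proposition~\ref{prop-level chain complexes} together with the corresponding closure properties of level and absolutely clean \emph{modules}, since a pure quotient of complexes is degreewise, hence cycle-wise, pure).

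The main obstacle I anticipate is the bookkeeping in the forward direction: verifying carefully that the iterated double-dual resolution of $Y$ is genuinely pure exact (not merely exact), so that Theorem~\ref{thm-pure} applies to its truncations, and then organizing the dimension-shifting argument through the two ``internal hom'' functors $\overline{\homcomplex}$ and $\homcomplex$ without sign or indexing errors. The conceptual content is entirely contained in Theorem~\ref{thm-pure} and the duality Lemma~\ref{lemma-dual-exact}; the work is in threading the pure-exactness hypothesis through Lemma~\ref{lemma-crazy-complexes} correctly.
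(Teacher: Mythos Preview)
Your outline follows the same route as the paper, and the converse direction together with the specializations~(1) and~(2) are handled exactly as the paper does. The one place where your proposal is vague and the paper supplies a genuine extra ingredient is the final ``dimension shift.''

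Concretely: once you have the pure exact double-dual resolution of $Y$, the paper packages it as a short exact sequence $0 \to S^n(Y) \to \mathbb{D} \to \mathbb{P} \to 0$ in $\textnormal{Ch}(\ch)$, with $\mathbb{P}$ pure exact and $\mathbb{D}$ a bounded-above complex with entries in $\cat{D}^{++}$. Theorem~\ref{thm-pure} kills $\mathbb{P}$, so one is reduced to showing $\homcomplex(\mathbb{C},\mathbb{D})$ is exact, equivalently $\Ext^1(\mathbb{C},\mathbb{D})=0$. Here a naive dimension shift along the infinite resolution does not terminate, and knowing $\overline{\homcomplex}(\mathbb{C},Y_k^{++})$ is exact for each $k$ is not by itself enough. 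The paper closes this by observing that $\mathbb{D}$ is an \emph{inverse} transfinite extension of spheres $S^m(D)$ with $D\in\cat{D}^{++}$, for which $\Ext^1(\mathbb{C},S^m(D))=0$ is already known via Lemma~\ref{lemma-crazy-complexes}, and then invoking the \emph{dual Eklof lemma}~\cite[Theorem~1.6]{enochs-iacob-jenda} to conclude $\Ext^1(\mathbb{C},\mathbb{D})=0$. You should name this tool explicitly; it is precisely what makes the passage to the limit rigorous. (Your sentence about ``$S^n$ applied to a pure exact complex of complexes is pure exact'' also does not parse as written, though the intended content---that the resolution, viewed in $\textnormal{Ch}(\ch)$, is pure exact so Theorem~\ref{thm-pure} applies---is correct.)
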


\begin{proof}
In view of Lemma~\ref{lemma-dual-exact}, if $\overline{\homcomplex}(\mathbb{C},Y)$ is
exact for all $Y \in \cat{D}$, then $X \overline{\otimes} \mathbb{C}$ is exact for all
$X \in \cat{C}$.  Conversely, suppose $X \overline{\otimes} \mathbb{C}$ is exact for all
$X \in \cat{C}$.  Then if $Y \in \cat{D}$, $Y^{+}\overline{\otimes} \mathbb{C}$ is exact,
and so Lemma~\ref{lemma-dual-exact} tells us that $\overline{\homcomplex}(\mathbb{C},Y^{++})$ is exact.  We conclude that $\overline{\homcomplex}(\mathbb{C},D)$ is exact for
all $D \in \cat{D}^{++}$, and we note that $\cat{D}^{++}\subseteq
\cat{D}$ since $(\cat{C},\cat{D})$ is a duality pair.

Now, for any complex $Y$, the natural map $Y \xrightarrow{}Y^{++}$ is a pure
monomorphism of complexes~\cite[Proposition~5.1.4.(4)]{garcia-rozas}.  So if $Y \in
\cat{D}$, the quotient $Y^{++}/Y$ is also in $\cat{D}$ since $\cat{D}$
is closed under pure quotients.  We can therefore create a resolution
of $Y\in \cat{D}$ by elements of $\cat{D}^{++}$.  That is, we can
find a pure exact resolution of $Y$ by complexes of complexes in $\cat{D}^{++}$.
This gives us a short exact sequence
\[
0 \xrightarrow{} S^{n}(Y) \xrightarrow{} \mathbb{D} \xrightarrow{} \mathbb{P}
\xrightarrow{} 0
\]
in which $\mathbb{P}$ is pure exact and $\mathbb{D}$ is a bounded above complex with
entries in $\cat{D}^{++}$.  Theorem~\ref{thm-pure} tells us that
$\homcomplex (\mathbb{C},\mathbb{P})$ is exact.  Hence $\homcomplex (\mathbb{C},\mathbb{D}) \cong \homcomplex (\mathbb{C},S^n(Y))$. So by Lemma~\ref{lemma-crazy-complexes}, it only remains to show that  $\homcomplex (\mathbb{C},\mathbb{D})$ is exact whenever $\mathbb{D}$ is bounded above with components in $\class{D}^{++}$.

But showing $\homcomplex (\mathbb{C},\mathbb{X})$ is exact for all complexes of projective complexes $\mathbb{C}$, is equivalent to showing $\Ext^1(\mathbb{C},\mathbb{X}) = 0$ for all such $\mathbb{C}$. By Lemma~\ref{lemma-crazy-complexes} and the first paragraph we conclude $\Ext^1(\mathbb{C},S^n(D)) = 0$ whenever $D \in \cat{D}^{++}$. Now any bounded above complex $\mathbb{D}$ with entries in $\cat{D}^{++}$ can be seen to be an inverse transfinite extension of spheres $S^n(D)$ for some $D \in \cat{D}^{++}$. By the dual of the Eklof Lemma, we know that $\Ext^1(\mathbb{C},-)$ is closed under inverse transfinite extensions~\cite[Theorem~1.6]{enochs-iacob-jenda}. So $\Ext^1(\mathbb{C},\mathbb{D}) = 0$, whence $\homcomplex (\mathbb{C},\mathbb{D})$ is exact  for all complexes of projective complexes $\mathbb{C}$. This completes the proof.

As noted above, the absolutely clean complexes and level complexes give rise to two duality pairs. Moreover, each class is closed under pure quotients by~\cite[Propositions~2.7/4.7]{bravo-gillespie}.
\end{proof}


\section{Applications to Ding projective modules and complexes}\label{sec-Ding projectives}

We now wish to prove the duals to Corollaries~\ref{cor-Stov-injective-modules} and~\ref{cor-Ding-injective-complexes}. They are Theorem~\ref{them-Ding-projective-modules} and Corollary~\ref{cor-Ding-projective-complexes} below.

\begin{definition}\label{def-Ding projectives}
We call an $R$-module $M$ \textbf{Ding projective} if there exists an exact complex of projectives $$\cdots \rightarrow P_1 \rightarrow P_0 \rightarrow P^0 \rightarrow P^1 \rightarrow \cdots$$ with $M = \ker{(P^0 \rightarrow P^1)}$ and which remains exact after applying $\Hom_R(-,F)$ for any flat module $F$.

In the same way, we call a chain complex $X$ \emph{Ding projective} if there exists an exact complex of projective complexes $$\cdots \rightarrow P_1 \rightarrow P_0 \rightarrow P^0 \rightarrow P^1 \rightarrow \cdots$$ with $X = \ker{(P^0 \rightarrow P^1)}$ and which remains exact after applying $\Hom_{\ch}(-,F)$ for any flat chain complex $F$. Recall that a chain complex $P$ is projective (resp. flat) in $\ch$ if and only if it is exact and each cycle $Z_nP$ is a projective (resp. flat) module.
\end{definition}

\begin{theorem}\label{them-Ding-projective-modules}
Let $R$ be a Ding-Chen ring. Then a module $M$ is Ding projective if and only if $M = Z_0P$ for some exact complex $P$ of projective $R$-modules. In the same way, a chain complex $X$ is Ding projective if and only if $X = Z_0\mathbb{P}$ for some exact complex $\mathbb{P}$ of projective complexes.
\end{theorem}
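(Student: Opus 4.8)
The plan is to prove the two directions separately, with the reverse implications carrying all the content. In both the module and the complex case the forward implication is immediate: it is exactly the definition of Ding projective with the extra exactness hypothesis on $\Hom$ simply deleted. So the real work is to show that over a Ding-Chen ring this extra hypothesis is automatic, and the module and complex cases use, respectively, Theorem~\ref{them-projcycles} and Theorem~\ref{thm-dual-exact} to supply it.

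For the module case, suppose $M = Z_0P$ with $P$ an exact complex of projective modules; I must show $\Hom_R(P,F)$ is exact for every flat module $F$, and then $P$ itself witnesses that $M$ is Ding projective. The key input is the proof of Theorem~\ref{them-projcycles}, which establishes that $\Hom_R(P,A)$ is exact whenever $P$ is an exact complex of projectives and $A$ is absolutely pure (equivalently, $\Ext^1_R(Z_nP,A)=0$ for all $n$). I would then invoke the Ding-Chen hypothesis in the form of~\cite{ding and chen 93}: a flat module has finite absolutely pure dimension, so $F$ admits a finite coresolution $0 \to F \to A^0 \to \cdots \to A^d \to 0$ by absolutely pure modules. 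Since each $P_n$ is projective, $\Hom_R(P,-)$ sends short exact sequences of modules to short exact sequences of complexes of abelian groups; breaking the coresolution into short exact sequences and using the long exact homology sequence together with the exactness of each $\Hom_R(P,A^i)$, a downward induction on $i$ gives that $\Hom_R(P,F)$ is exact.

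For the complex case, suppose $X = Z_0\mathbb{P}$ with $\mathbb{P}\in\textnormal{Ch}(\ch)$ an exact complex of projective complexes; I must show $\Hom_{\ch}(\mathbb{P},F)$ is exact for every flat chain complex $F$. The plan is to replace Theorem~\ref{them-projcycles} by Theorem~\ref{thm-dual-exact}. First, each component $\mathbb{P}_n$ is a projective complex, hence a flat complex, so $F\overline{\otimes}-$ is an exact functor for any flat complex $F$ of right modules, whence $F\overline{\otimes}\mathbb{P}$ is exact because $\mathbb{P}$ is. Since $R$ is coherent, the level modules coincide with the flat modules, so by Proposition~\ref{prop-level chain complexes} the level complexes coincide with the flat complexes; thus $L\overline{\otimes}\mathbb{P}$ is exact for every level complex $L$. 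Part~(2) of Theorem~\ref{thm-dual-exact}, applied to the duality pair of level and absolutely clean complexes, then gives that $\overline{\homcomplex}(\mathbb{P},A)$ is exact for every absolutely clean complex $A$; by Lemma~\ref{lemma-crazy-complexes} this says $\homcomplex(\mathbb{P},S^n(A))$ is exact for all $n$, and in particular (at $n=0$) that $\Hom_{\ch}(\mathbb{P},A)$ is exact for every absolutely clean complex $A$. To conclude, I would use that over a Ding-Chen ring every flat complex $F$ has finite absolutely clean (equivalently absolutely pure) dimension --- the dual of the observation used in the proof of Corollary~\ref{cor-Ding-injective-complexes}, again via~\cite{ding and chen 93} --- so $F$ has a finite coresolution by absolutely clean complexes; since each $\mathbb{P}_n$ is projective in $\ch$, the functor $\Hom_{\ch}(\mathbb{P},-)$ is exact on short exact sequences, and the same dimension shift as in the module case yields that $\Hom_{\ch}(\mathbb{P},F)$ is exact. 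Hence $\mathbb{P}$ witnesses that $X=Z_0\mathbb{P}$ is Ding projective.

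The main obstacle I anticipate is twofold, and both points live in the complex case. First, one must carefully bookkeep the identifications among the three hom-functors $\Hom_{\ch}$, $\homcomplex$, and $\overline{\homcomplex}$, and the reindexing by spheres and suspensions, so that Theorem~\ref{thm-dual-exact}, Lemma~\ref{lemma-crazy-complexes}, and Definition~\ref{def-Ding projectives} all line up correctly. Second, one must justify that a flat complex has finite absolutely clean dimension over a Ding-Chen ring: this is where the hypothesis is genuinely used, and it requires knowing that a uniform bound on the flat (equivalently absolutely pure) dimensions of the cycle modules $Z_nF$ propagates to a finite absolutely clean dimension of the complex $F$ itself. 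Everything else is the routine dimension-shifting argument already carried out in the module case, merely transported to $\ch$.
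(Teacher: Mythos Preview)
Your proof is correct, but it routes through the duality in the opposite direction from the paper's. For modules, the paper first shows $A\otimes_R P$ is exact for all absolutely pure $A$ (using that such $A$ have finite \emph{flat} dimension over a Ding-Chen ring) and then invokes the duality result~\cite[Theorem~6.7]{bravo-gillespie-hovey} to conclude $\Hom_R(P,F)$ is exact for flat $F$. You instead establish $\Hom_R(P,A)$ exact for absolutely pure $A$ via Theorem~\ref{them-projcycles} and then dimension-shift using that flat modules have finite \emph{absolutely pure} dimension. The paper in fact explicitly remarks that your route is a viable alternative (``a similar but alternate proof could be given, using Theorem~\ref{them-projcycles} instead'').

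For complexes the same contrast appears: the paper applies Theorem~\ref{thm-dual-exact}(1) and verifies the tensor side using that absolutely pure complexes have finite flat dimension, whereas you apply Theorem~\ref{thm-dual-exact}(2) and then need that flat complexes have finite absolutely clean dimension. The obstacle you flag about this last point is genuine but minor: taking an injective coresolution of a flat complex $F$ in $\ch$ and using that short exact sequences of exact complexes yield short exact sequences on cycles, the uniform bound on the absolutely pure dimension of the $Z_nF$ forces the $d$-th cosyzygy to be exact with absolutely pure cycles, hence an absolutely pure complex. The paper's route sidesteps this by working on the tensor side, where the analogous finite-dimension statement (for absolutely pure complexes) is the one already established in the proof of Corollary~\ref{cor-Ding-injective-complexes}.
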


\begin{proof} First, we look at the $R$-module case. Let $P$ be any exact complex of projectives. We need to show that $\Hom_R(P,F)$ remains exact for any flat (left) module $F$. But by~\cite[Theorem~6.7]{bravo-gillespie-hovey}, and since $R$ is coherent, it is enough to show $A \tensor_R P$ is exact for all absolutely pure (right) modules $A$.
Note that clearly $F \tensor_R P$ is exact for any flat (right) module $F$. It follows that $M \tensor_R P$ is exact for all $M$ of finite flat dimension. But since $R$ is a Ding-Chen ring, any absolutely pure module $A$ does have finite flat dimension.  So we are done. We note that a similar but alternate proof could be given, using Theorem~\ref{them-projcycles} instead.

Next, we see that the proof holds for chain complexes due to the work of Section~\ref{sec-complexes of projective complexes}. So now let $\mathbb{P}$ be any exact complex of projective complexes. We need to show that $\Hom_{\ch}(P,F)$ remains exact for any flat (left) chain complex $F$. However, looking at the definition of $\overline{\homcomplex}$ it is clear that this is equivalent to the statement that $\overline{\homcomplex}(\mathbb{P},F)$ remains exact for any flat complex $F$.
Now by Theorem~\ref{thm-dual-exact}, and since $R$ is coherent, this is equivalent to the statement that $A \overline{\otimes} \mathbb{P}$ is exact for all absolutely pure (right) complexes $A$.
We note that $F \overline{\otimes} \mathbb{P}$ is exact for any flat (right) complex $F$ by~\cite[Proposition~5.1.2]{garcia-rozas}. It follows that $X \overline{\otimes} \mathbb{P}$ is exact for all complexes $X$ of finite flat dimension. But since $R$ is a Ding-Chen ring, any absolutely pure complex $A$ is an exact complex with an upper bound on the flat dimensions of the $Z_nA$. It follows that $A$ itself is a complex of finite flat dimension.  So we are done.
\end{proof}

\begin{corollary}\label{cor-Ding-projective-complexes}
Let $R$ be a Ding-Chen ring. Then a complex $X$ is Ding projective if and only if each $X_n$ is a Ding projective $R$-module.
\end{corollary}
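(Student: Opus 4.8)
The plan is to prove Corollary~\ref{cor-Ding-projective-complexes} by combining the characterization of Ding projective complexes from Theorem~\ref{them-Ding-projective-modules} with the standard ``structure of projective complexes'' argument. The forward direction is immediate: if $X$ is Ding projective in $\ch$, then by Theorem~\ref{them-Ding-projective-modules} we have $X = Z_0\mathbb{P}$ for some exact complex $\mathbb{P}$ of projective complexes. Evaluating everything in a fixed degree $n$ gives an exact complex $(\mathbb{P})_n$ of \emph{modules}, each of which is a component $(P^i)_n$ of a projective chain complex $P^i$, hence is a projective $R$-module (the components of a projective complex are projective modules). So $X_n = Z_0((\mathbb{P})_n)$ exhibits $X_n$ as a cycle of an exact complex of projective modules, and Theorem~\ref{them-Ding-projective-modules} (module case) says $X_n$ is Ding projective.

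For the converse, suppose each $X_n$ is a Ding projective $R$-module. The strategy is to assemble, degree by degree, an exact complex of projective chain complexes having $X$ as its zeroth cycle, then invoke Theorem~\ref{them-Ding-projective-modules} again. By the module case of that theorem, for each $n$ we may choose a totally acyclic-type resolution: an exact complex $P^{(n)}$ of projective $R$-modules with $Z_0(P^{(n)}) = X_n$. The main work is to promote these degreewise resolutions to a single exact complex $\mathbb{P}$ in $\textnormal{Ch}(\ch)$ of \emph{projective complexes} with $Z_0\mathbb{P} = X$; equivalently, one builds a bicomplex whose columns resolve the components $X_n$ compatibly with the differentials of $X$. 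I would do this by a standard horseshoe-style induction, or more cleanly by using the completeness of the projective cotorsion pair $(\class{C},\class{W})$ on $\ch$ from Theorem~\ref{thm-bounded above complexes of finitely generated frees cogenerate} together with the fact (Corollary~\ref{cor-flats}(2) and the surrounding discussion, or directly from the projective model structure for Ding projective complexes) that a complex of Ding projective modules admits a nice resolution by projective complexes. Once $\mathbb{P}$ is constructed, $\mathbb{P}$ is an exact complex of projective complexes, so by Theorem~\ref{them-Ding-projective-modules} its cycle $Z_0\mathbb{P} = X$ is Ding projective.

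A more economical route, which avoids explicit bicomplex bookkeeping, is the following. By Theorem~\ref{them-Ding-projective-modules} it suffices to show $X \overline{\otimes}$ (or rather, the relevant $\overline{\homcomplex}$) behaves correctly; but more to the point, one can appeal directly to the projective model structure on $\ch$ whose cofibrant objects are the Ding projective complexes, together with the already-established fact that over a Ding-Chen ring a chain complex is Ding projective iff it is a cycle of an exact complex of projective complexes. The key input is that ``$\dwclass{DP} = \dgclass{DP}$'', i.e.\ that the degreewise Ding projective complexes coincide with the DG-Ding projective ones; this is the chain-complex analog of what was done for Ding injectives in Corollary~\ref{cor-Ding-injective-complexes} and its following Remark. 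Concretely: using~\cite[Proposition~7.2]{gillespie-recollement}-type lifting, the Ding projective cotorsion pair $(\class{DP},\class{W})$ on $R$-Mod lifts to a projective cotorsion pair $(\dwclass{DP}, \leftperp{\dwclass{DP}}^{\perp}\cap\cdots)$ on $\ch$ whose left class is the complexes of Ding projective modules, and one shows this coincides with the cotorsion pair whose left class is the Ding projective complexes; the Ding-Chen hypothesis enters exactly as in Corollary~\ref{cor-Ding-injective-complexes}, via the fact that every flat (co)complex has finite flat dimension and hence sits in the appropriate $\class{W}$.

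The main obstacle is the converse direction: transferring degreewise resolutions into a genuine exact complex of \emph{projective complexes} requires compatibility with the horizontal differential of $X$, and one must be careful that the resulting bicomplex is exact as an object of $\textnormal{Ch}(\ch)$ (not merely columnwise). I expect the cleanest way around this is to not build the resolution by hand at all, but to package it through the completeness of the relevant cotorsion pair on $\ch$ from Section~\ref{sec-complexes of projective complexes} and the identification of the two relevant classes, mirroring the proof of Corollary~\ref{cor-Ding-injective-complexes} on the injective side; the Ding-Chen condition does all the real lifting, exactly as before.
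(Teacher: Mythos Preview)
Your forward direction is fine and matches the paper's ``easy'' part. The converse, however, has a genuine gap in both routes you sketch.

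Your second, ``economical'' route tries to mirror the proof of Corollary~\ref{cor-Ding-injective-complexes}: lift the module-level cotorsion pair to $\ch$ and then argue that the Ding-Chen hypothesis forces the relevant orthogonal class to contain what you need. But the injective argument rests on Corollary~\ref{cor-fin-flat-dim}, which in turn uses Proposition~\ref{prop-Grothendieck direct limits} (closure of the thick class under direct limits). The paper is explicit that these direct-limit methods do \emph{not} dualize to the projective side; this is precisely why all of Section~\ref{sec-complexes of projective complexes} is developed. So ``the Ding-Chen hypothesis enters exactly as in Corollary~\ref{cor-Ding-injective-complexes}'' is not a justified step, and you cannot simply assert $\dwclass{DP}=\dgclass{DP}$ by analogy.

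Your first route---build an exact complex of projective complexes with $X$ as zeroth cycle---is the one the paper actually takes, but your sketch stops at ``horseshoe-style induction'' and correctly identifies (without resolving) the obstacle of making degreewise resolutions compatible with the differential of $X$. The missing concrete idea is this: the left half of the resolution is free since $\ch$ has enough projectives. For the right half, use the canonical degreewise-split embedding $X\hookrightarrow\bigoplus_n D^{n+1}(X_n)$. Since each $X_n$ is Ding projective, choose short exact sequences $0\to X_n\to Q_n\to Y_n\to 0$ with $Q_n$ projective and $Y_n$ Ding projective, and compose to get a monomorphism $\eta\colon X\hookrightarrow \bigoplus_n D^{n+1}(Q_n)=:P^0$, a projective complex. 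A snake-lemma check shows $\cok\eta$ again has Ding projective components, so one iterates. This is the explicit construction the paper gives; without it (or an equivalent device) your proposal does not yet constitute a proof.
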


\begin{proof}
The ``only if'' part is easy to show. For the converse, Theorem~\ref{them-Ding-projective-modules} assures us that we only need to show $X$ equals the zero cycles of some exact complex of projective complexes. Certainly we can find an exact complex $\cdots \xrightarrow{d_3} P_2 \xrightarrow{d_2} P_1 \xrightarrow{d_1} P_0 \xrightarrow{\epsilon} X \xrightarrow{} 0$ with each $P_i$ a projective complex. So it is left to extend this complex to the right. First note that there is an obvious (degreewise split) short exact sequence $$0 \rightarrow X \xrightarrow{(1,d)} \bigoplus_{n \in \Z}D^{n+1}(X_n) \xrightarrow{-d+1} \Sigma X \rightarrow 0.$$
Now each $X_n$ is Ding projective. So we certainly can find a short exact sequence
$0 \rightarrow X_n \xrightarrow{\alpha_n} Q_n \xrightarrow{\beta_n} Y_n \rightarrow 0$ where $Q_n$ is projective and $Y_n$ is also Ding projective. This gives us another short exact sequence
\[
\bigoplus_{n \in \Z}D^{n+1}(X_n) \xrightarrow{ \bigoplus_{n \in \Z}D^{n+1}(\alpha_n)} \bigoplus_{n \in \Z}D^{n+1}(Q_n) \xrightarrow{\bigoplus_{n \in \Z}D^{n+1}(\beta_n)} \bigoplus_{n \in \Z}D^{n+1}(Y_n).
\]
Notice that $\bigoplus_{n \in \Z}D^{n+1}(Q_n)$ is a projective complex and we will denote it by $P^0$. Furthermore, let $\eta : X \rightarrow P^0$ be the composite
\[
X \xrightarrow{(1,d)} \bigoplus_{n \in \Z}D^{n+1}(X_n) \xrightarrow{\bigoplus_{n \in \Z}D^{n+1}(\alpha_n)} \bigoplus_{n \in \Z}D^{n+1}(Q_n).
\]
Then $\eta$ is an monomorphism since it is the composite of two monomorphisms. Moreover, setting $C^0 = \cok{\eta}$, it follows from the snake lemma that $C^0$ sits in the short exact sequence $0 \rightarrow   \Sigma X \xrightarrow{} C^0 \xrightarrow{}  \bigoplus_{n \in \Z}D^{n+1}(Y_n)  \rightarrow 0$. In particular, $C^0$ is an extension of $\bigoplus_{n \in \Z}D^{n+1}(Y_n)$ and $\Sigma X$, and so $C^0$ must be Ding projective in each degree since both of $\bigoplus_{n \in \Z}D^{n+1}(Y_n)$ and $ \Sigma X$ are such. Since $C^0$ has the same properties as $X$, we may continue inductively to obtain the desired resolution
$$0 \xrightarrow{} X \xrightarrow{\eta} P^0 \xrightarrow{d^0} P^1 \xrightarrow{d^1} P^2 \xrightarrow{d^2} \cdots$$
Finally, we paste the resolution together with $\cdots \xrightarrow{} P_2 \xrightarrow{d_2} P_1 \xrightarrow{d_1} P_0 \xrightarrow{\epsilon} X \rightarrow 0$
by setting $d_0 = \eta \epsilon$ and we are done.
\end{proof}

\section{Ding flat modules and complexes}\label{sec-flat}

As pointed out in~\cite{gillespie-ding}, there is also a natural notion of a Ding flat module, but it turns out to be equivalent to the notion of a Gorenstein flat module.  We recall the definition.

\begin{definition}\label{def-Ding flats}
We call an $R$-module $M$ \textbf{Ding flat} if there exists an exact complex of flat modules $$\cdots \rightarrow F_1 \rightarrow F_0 \rightarrow F^0 \rightarrow F^1 \rightarrow \cdots$$ with $M = \ker{(F^0 \rightarrow F^1)}$ and which remains exact after applying $A \otimes_R -$ for any absolutely pure (right) $R$-module $A$.

In the same way, we call a chain complex $X$ \emph{Ding flat} if there exists an exact complex of flat complexes $$\cdots \rightarrow F_1 \rightarrow F_0 \rightarrow F^0 \rightarrow F^1 \rightarrow \cdots$$ with $X = \ker{(F^0 \rightarrow F^1)}$ and which remains exact after applying $A \overline{\otimes} -$ for any absolutely pure complex $A$.
\end{definition}

\begin{proposition}
Let $R$ be any ring. A Ding flat module is nothing more than a Gorenstein flat module. Similarly, a chain complex $X$ is Ding flat if and only if it is Gorenstein flat.
\end{proposition}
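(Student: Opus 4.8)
The plan is to prove the module and complex statements essentially in parallel, since the only real content is that the $\otimes$-exactness condition in the definition of Ding flat is automatic for the specific resolution one uses to witness Gorenstein flatness, and conversely that a Ding flat object is Gorenstein flat simply by forgetting the extra exactness. Recall that a Gorenstein flat module $M$ is by definition a cycle $\ker(F^0\to F^1)$ of an exact complex of flats $F$ such that $A\otimes_R F$ remains exact for all injective right modules $A$. So one direction—Ding flat implies Gorenstein flat—is trivial in the module case: every absolutely pure module contains, and here more relevantly, every injective module \emph{is} absolutely pure, so demanding exactness of $A\otimes_R -$ for all absolutely pure $A$ is formally stronger than demanding it for all injective $A$. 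Thus a Ding flat module is automatically Gorenstein flat.

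For the converse I would argue that the extra generality costs nothing: if $F$ is \emph{any} exact complex of flat modules such that $A\otimes_R F$ is exact for all injective right $R$-modules $A$, then in fact $A\otimes_R F$ is exact for all absolutely pure right modules $A$. The key observation is that an absolutely pure (FP-injective) right module $A$ embeds purely in its injective envelope $E(A)$—indeed $A\to A^{++}$ is a pure monomorphism and $A^{++}$ embeds in an injective, but the cleanest route is: every pure submodule of an injective is a direct summand? No—rather, the standard fact is that $A$ absolutely pure means every embedding of $A$ is pure, so $0\to A\to E(A)\to E(A)/A\to 0$ is pure exact. Tensoring a pure exact sequence with any complex of flat modules preserves exactness of the associated bicomplex; combined with exactness of $E(A)\otimes_R F$ and $(E(A)/A)\otimes_R F$—the latter because $E(A)/A$ is again absolutely pure over a coherent ring, but we want \emph{any} ring here, so instead one should induct or use that $A\otimes_R F$ is a subquotient controlled by the pure-exact sequence. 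The clean statement: $-\otimes_R F$ sends pure exact sequences of modules to exact sequences of complexes, so from $0\to A\otimes F\to E(A)\otimes F\to (E(A)/A)\otimes F\to 0$ being exact as a sequence of complexes together with $E(A)\otimes F$ exact, a diagram chase on the long exact homology sequence gives $H_n(A\otimes F)\cong H_{n+1}((E(A)/A)\otimes F)$; iterating along a pure injective resolution of $A$ by injectives (which exists: keep embedding the absolutely pure cokernels into injectives, and these embeddings are pure since each cokernel is absolutely pure—this uses only that absolutely pure modules are closed under such extensions, which holds over any ring since absolute purity is detected by finitely presented modules) shows $H_n(A\otimes F)=0$.

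For the chain complex case the argument is the formal mirror, using the machinery of Section~\ref{subsec-modified hom and tensor}: an absolutely pure complex $A$ is, by Proposition~\ref{prop-level chain complexes}'s analogue, an exact complex whose cycles are absolutely pure modules, equivalently $\overline{\homcomplex}(-,A)$ behaves injectively and $A\to A^{++}$ is a pure monomorphism of complexes by~\cite[Proposition~5.1.4.(4)]{garcia-rozas}. One then runs the identical pure-resolution argument with $\overline{\otimes}$ in place of $\otimes$, using that $\overline{\otimes}$ is right exact and carries pure exact sequences of complexes to exact sequences of bicomplexes (this is part of the purity characterization recalled before Definition~\ref{def-pure exact complexes}), and that the relevant injective complexes are absolutely pure complexes. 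The definition of Gorenstein flat complex should be taken as: a cycle of an exact complex $F$ of flat complexes with $A\,\overline{\otimes}\,F$ exact for all injective complexes $A$; again Ding flat $\Rightarrow$ Gorenstein flat is immediate since injective complexes are absolutely pure complexes, and the converse is the pure-resolution argument.

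The main obstacle I anticipate is making precise and verifying the closure property that drives the induction—namely that over an \emph{arbitrary} ring, the cokernel of a pure monomorphism from an absolutely pure module into an injective module is again absolutely pure, so that one genuinely gets a resolution $0\to A\to E^0\to E^1\to\cdots$ by injectives which stays pure exact in every degree. This is where one must be careful: absolute purity is the property $\Ext^1_R(N,-)=0$ for all finitely presented $N$, and one needs that this class is closed under cokernels of monomorphisms with injective middle term—which follows from the long exact $\Ext$ sequence since $\Ext^1_R(N,E)=\Ext^2_R(N,E)=0$ for $E$ injective, giving $\Ext^1_R(N,E/A)\cong\Ext^2_R(N,E)=0$? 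That needs $\Ext^2$ vanishing which is automatic for injective $E$ only if we're careful—actually $\Ext^i_R(N,E)=0$ for all $i\ge1$ when $E$ is injective, so yes $\Ext^1_R(N,E/A)=0$, confirming $E/A$ absolutely pure with no coherence hypothesis. Granting this, the homology-chase is routine, so the bulk of the write-up is bookkeeping rather than genuine difficulty.
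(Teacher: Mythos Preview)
Your trivial direction (Ding flat $\Rightarrow$ Gorenstein flat) is fine and matches the paper. For the converse, your starting move---embed the absolutely pure module $A$ purely into an injective $I$---is exactly right and is what the paper uses. The gap is in what you do next. From the pure short exact sequence $0\to A\to I\to I/A\to 0$ you extract, via the long exact homology sequence, an isomorphism $H_n(A\otimes F)\cong H_{n+1}((I/A)\otimes F)$, and then you iterate. But this chain of isomorphisms never terminates: the complex $F$ witnessing Gorenstein flatness is unbounded in both directions, so shifting the homological degree to infinity tells you nothing. (There is also a slip in your side computation: the long exact sequence for $0\to A\to E\to E/A\to 0$ gives $\Ext^1_R(N,E/A)\cong\Ext^2_R(N,A)$, not $\Ext^2_R(N,E)$, so your argument that $E/A$ is absolutely pure over an arbitrary ring does not go through as written---but this is moot given the larger problem.)

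The paper's fix is to avoid the long exact sequence entirely and work one step, directly. Exactness of $A\overline{\otimes}\mathbb{F}$ is equivalent to injectivity of $A\overline{\otimes} Z_n\mathbb{F}\to A\overline{\otimes}\mathbb{F}_n$ for each $n$. Now stare at the commutative square
\[
\begin{CD}
A \overline{\otimes} Z_n\mathbb{F}  @>>> I \overline{\otimes} Z_n\mathbb{F} \\
@VVV @VVV       \\
A \overline{\otimes} \mathbb{F}_n @>>> I \overline{\otimes} \mathbb{F}_n
\end{CD}
\]
The horizontal maps are monic because $A\hookrightarrow I$ is pure; the right vertical map is monic because $I\overline{\otimes}\mathbb{F}$ is exact by the Gorenstein flat hypothesis. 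Hence the left vertical map is monic, and you are done---no iteration, no need for $I/A$ to be absolutely pure. The same square works verbatim for modules with $\otimes_R$ in place of $\overline{\otimes}$.
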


A proof of the above proposition, for modules, goes back to~\cite[Lemma~2.8]{ding and mao 08}. We provide a new proof which is very quick and easy.

\begin{proof}
We give the proof for complexes but the same proof works for modules. Note that Ding flat complexes are clearly Gorenstein flat since injective complexes are absolutely pure. Conversely, suppose that $X$ is a Gorenstein flat complex. By definition, this means there exists an exact complex $$\mathbb{F} = \cdots \rightarrow F_1 \rightarrow F_0 \rightarrow F^0 \rightarrow F^1 \rightarrow \cdots$$ of flat complexes with $X = \ker{(F^0 \rightarrow F^1)}$  which remains exact after applying $I \,\overline{\otimes} -$ for any injective complex $I$.  We will show that this complex does in fact remain exact after applying $A \overline{\otimes} -$ for any absolutely pure (right) chain complex $A$. So let such an $A$ be given and note that it is equivalent to show that the map of complexes $A \overline{\otimes} Z_n\mathbb{F} \xrightarrow{} A \overline{\otimes} \mathbb{F}_n$ is a monomorphism for each $n$. Now let $A \hookrightarrow I$ be an embedding into an injective complex $I$ and note that this must be a pure monomorphism since $A$ is an absolutely pure complex. For each $n$ we have the commutative diagram:
$$\begin{CD}
A \overline{\otimes} Z_n\mathbb{F}  @>>> I \overline{\otimes} Z_n\mathbb{F} \\
@VVV @VVV       \\
A \overline{\otimes} \mathbb{F}_n @>>> I \overline{\otimes} \mathbb{F}_n \\
\end{CD}$$
The two horizontal arrows are monomorphisms since $A \hookrightarrow I$ is pure. The right vertical arrow is also a monomorphism since $I \overline{\otimes} \mathbb{F}$ is exact.  It follows that the left vertical arrow must also be a monomorphism.
\end{proof}

\begin{proposition}\label{them-Ding-flat-modules}
Let $R$ be a Ding-Chen ring. Then a module $M$ is Ding flat if and only if $M = Z_0F$ for some exact complex $F$ of flat $R$-modules. In the same way, a chain complex $X$ is Ding flat if and only if $X = Z_n \mathbb{F}$ for some exact complex $\mathbb{F}$ of flat complexes.
\end{proposition}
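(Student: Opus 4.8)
The plan is to treat both equivalences in parallel, noting that in each the forward implication is immediate from Definition~\ref{def-Ding flats} and only the reverse implication carries content. For the module case, if $M = Z_0F$ for an exact complex $F$ of flat $R$-modules, I would prove directly that $F$ itself witnesses that $M$ is Ding flat; that is, that $A \otimes_R F$ stays exact for every absolutely pure right $R$-module $A$. This is the exact analogue, with $\otimes_R$ in place of $\Hom_R$, of the argument already used to prove Theorem~\ref{them-Ding-projective-modules}.

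The argument has three steps. First, $F' \otimes_R F$ is exact for every flat right $R$-module $F'$, since $F' \otimes_R -$ is an exact functor and $F$ is an exact complex. Second, $M' \otimes_R F$ is exact for every $R$-module $M'$ of finite flat dimension: choosing a finite flat resolution of $M'$ and breaking it into short exact sequences $0 \to K_{i+1} \to Q_i \to K_i \to 0$ (with $K_0 = M'$, the $Q_i$ flat, and the top kernel flat), one tensors each of these with the complex $F$; because every component $F_n$ is flat, $-\otimes_R F_n$ is exact, so each such sequence yields a short exact sequence of complexes $0 \to K_{i+1} \otimes_R F \to Q_i \otimes_R F \to K_i \otimes_R F \to 0$, and since $H_*(Q_i \otimes_R F) = 0$ the long exact homology sequences give $H_*(K_i \otimes_R F) \cong H_{*-1}(K_{i+1}\otimes_R F)$, so a descending induction on the flat dimension yields $H_*(M'\otimes_R F)=0$. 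Third, over a Ding-Chen ring every absolutely pure module has finite flat dimension \cite{ding and chen 93}, so taking $M' = A$ completes the module case, and the forward direction is free since $M = \ker(F^0\to F^1)$ is by definition a cycle of the exhibited complex.

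For the chain complex statement the same three steps are carried out in $\ch$, using the modified tensor product $\overline{\otimes}$ from Section~\ref{subsec-modified hom and tensor} and the constructions of Section~\ref{sec-complexes of projective complexes}. Given an exact complex $\mathbb{F} \in \textnormal{Ch}(\ch)$ of flat complexes with $X = Z_n\mathbb{F}$, I would show $A \,\overline{\otimes}\, \mathbb{F}$ is exact for every absolutely pure complex $A$: first $F' \,\overline{\otimes}\, \mathbb{F}$ is exact for every categorically flat complex $F'$, since these are precisely the complexes for which $F'\,\overline{\otimes}\,-$ is exact; then for $\mathbb{X} \in \ch$ of finite flat dimension one runs the same dimension-shift, applying $-\,\overline{\otimes}\,\mathbb{F}_n$ (exact because each component $\mathbb{F}_n$ is a flat complex) degreewise to a finite resolution of $\mathbb{X}$ by flat complexes and invoking the long exact homology sequence in $\textnormal{Ch}(\ch)$; finally, as established in the proof of Theorem~\ref{them-Ding-projective-modules}, over a Ding-Chen ring an absolutely pure complex $A$ is exact with a uniform bound on the flat dimensions of its cycles $Z_nA$ and hence is itself of finite flat dimension, so $\mathbb{X} = A$ finishes the proof.

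The step I expect to need the most care is this last transcription to the complex level: keeping straight the sidedness conventions so that the categorical flatness of each $\mathbb{F}_n$ is applied in the correct variable of $\overline{\otimes}$, and checking that the short exact sequences of objects of $\textnormal{Ch}(\ch)$ produced this way really do give rise to the expected long exact homology sequences, so that the induction on flat dimension goes through verbatim. The remainder is a routine adaptation of the module argument and of the proof of Theorem~\ref{them-Ding-projective-modules}.
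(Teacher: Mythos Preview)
Your proposal is correct and follows essentially the same route as the paper's own proof: show the complex witnessing $M$ (or $X$) as a cycle is itself a Ding-flat witness by observing that tensoring with flats preserves exactness, then dimension-shift to modules (or complexes) of finite flat dimension, and finally use that over a Ding-Chen ring absolutely pure objects have finite flat dimension. The paper merely sketches this, referring back to the end of the proof of Theorem~\ref{them-Ding-projective-modules}, while you have spelled out the dimension-shift and the sidedness checks explicitly.
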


\begin{proof}
The proof is easy and very similar to the last few sentences of the proof of Theorem~\ref{them-Ding-projective-modules}. Briefly, say $\mathbb{F}$ is an exact complex of flat complexes; we wish to show that $A \overline{\otimes}\, \mathbb{F}$ is exact for any absolutely pure complex $A$. Certainly $F \overline{\otimes}\, \mathbb{F}$ is exact for any flat complex $F$, and, since $R$ is a Ding-Chen ring, any absolutely pure complex $A$ has finite flat dimension. We argue that $A \overline{\otimes} \mathbb{F}$ is exact for any such $A$.
\end{proof}

\begin{corollary}\label{cor-Ding-flat-complexes}
Let $R$ be a Ding-Chen ring. Then a complex $X$ is Ding flat if and only if each $X_n$ is a Ding flat $R$-module.
\end{corollary}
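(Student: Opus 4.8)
The plan is to run the argument of Corollary~\ref{cor-Ding-projective-complexes} with ``projective'' systematically replaced by ``flat'', using Proposition~\ref{them-Ding-flat-modules} in place of Theorem~\ref{them-Ding-projective-modules}. The forward implication is quick: if $X$ is Ding flat then by Proposition~\ref{them-Ding-flat-modules} we may write $X = Z_k\mathbb{F}$ for some exact complex $\mathbb{F}$ of flat complexes; evaluating this exact sequence of chain complexes in a fixed degree $j$ produces an exact complex of $R$-modules (each component of a flat complex being flat) having $X_j$ as a cycle, so $X_j$ is Ding flat by the module case of Proposition~\ref{them-Ding-flat-modules}.

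For the converse, assume each $X_n$ is a Ding flat $R$-module. By Proposition~\ref{them-Ding-flat-modules} it suffices to realize $X$ as a cycle of an exact complex of flat complexes. Since $\ch$ has enough projectives (which are in particular flat complexes), first fix an exact complex $\cdots \to \mathbb{F}_2 \to \mathbb{F}_1 \to \mathbb{F}_0 \xrightarrow{\epsilon} X \to 0$ with each $\mathbb{F}_i$ a flat complex; the work is to extend $X$ to the right. Exactly as in Corollary~\ref{cor-Ding-projective-complexes}, begin with the degreewise split sequence $0 \to X \xrightarrow{(1,d)} \bigoplus_{n\in\Z} D^{n+1}(X_n) \to \Sigma X \to 0$. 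Using that Ding flat $=$ Gorenstein flat, pick for each $n$ a short exact sequence $0 \to X_n \to G_n \to Y_n \to 0$ with $G_n$ flat and $Y_n$ Ding flat (immediate from the definition of Gorenstein flat); applying $D^{n+1}(-)$ and taking direct sums gives a monomorphism $\bigoplus D^{n+1}(X_n) \to \mathbb{F}^0 := \bigoplus D^{n+1}(G_n)$ with cokernel $\bigoplus D^{n+1}(Y_n)$, where $\mathbb{F}^0$ is a flat complex. Composing produces a monomorphism $\eta : X \to \mathbb{F}^0$, and the standard ``$A \subseteq B \subseteq C$'' isomorphism (a consequence of the snake lemma) exhibits $C^0 := \cok\eta$ in a short exact sequence $0 \to \Sigma X \to C^0 \to \bigoplus D^{n+1}(Y_n) \to 0$. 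Since over a coherent ring the Gorenstein flat modules are closed under direct sums and extensions, $C^0$ is Ding flat in each degree, so the construction iterates to give $0 \to X \xrightarrow{\eta} \mathbb{F}^0 \to \mathbb{F}^1 \to \cdots$ with each $\mathbb{F}^i$ a flat complex. Splicing this with the chosen left resolution via $d_0 = \eta\epsilon$ yields an exact complex of flat complexes having $X$ as a cycle, whence $X$ is Ding flat by Proposition~\ref{them-Ding-flat-modules}.

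Beyond faithfully transcribing the proof of Corollary~\ref{cor-Ding-projective-complexes}, the only inputs needing justification are two standard facts about Gorenstein flat modules over a Ding-Chen ring: that such a module embeds in a flat module with Gorenstein flat cokernel, and that the Gorenstein flat modules are closed under extensions and arbitrary direct sums --- the latter holding because over a coherent ring these modules form the left-hand class of a complete cotorsion pair (equivalently, one can run a horseshoe argument with the cycle description of Proposition~\ref{them-Ding-flat-modules}). I expect this bookkeeping, rather than any conceptual point, to be the main and entirely routine obstacle; the remainder of the argument is formally identical to the Ding projective case.
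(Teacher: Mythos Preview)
Your proposal is correct and follows precisely the approach the paper itself prescribes: the paper's proof of this corollary simply instructs the reader to carry over the proof of Corollary~\ref{cor-Ding-projective-complexes}, replacing \emph{projective} by \emph{flat} throughout and invoking Proposition~\ref{them-Ding-flat-modules} in place of Theorem~\ref{them-Ding-projective-modules}. You have executed this faithfully, and the extra closure facts you flag (embedding of a Gorenstein flat module into a flat module with Gorenstein flat cokernel, and closure of Gorenstein flats under extensions and direct sums over a coherent ring) are indeed the only bookkeeping required and are standard.
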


\begin{proof}
Using the above proposition, the proof of Corollary~\ref{cor-Ding-projective-complexes} carries over. Just replace the word \emph{projective} with the word \emph{flat} throughout the proof.
\end{proof}


\begin{thebibliography}{Lam99}

\bibitem[AR94]{adamek-rosicky}
 J. Ad\'amek and J. Rosick\'y, \emph{Locally presentable and
 accessible categories}, Number 189 in London Mathematical Society
 Lecture Note Series, Cambridge University Press, 1994.


\bibitem[BG15]{bravo-gillespie}
Daniel Bravo and James Gillespie, \emph{Absolutely clean, level, and Gorenstein AC-injective complexes}, preprint; arXiv:1408.7089


\bibitem[BGH13]{bravo-gillespie-hovey}
Daniel Bravo, James Gillespie and Mark Hovey, \emph{The stable module category of a general ring}, submitted.

\bibitem[DC93]{ding and chen 93}
N. Ding and J. Chen, \emph{The flat dimensions of injective modules}, Manuscripta Math. vol.~78, no.~2, 1993, pp.~165-177.

\bibitem[DC96]{ding and chen 96}
N. Ding and J. Chen, \emph{Coherent rings with finite self-FP-injective dimension}, Comm. Algebra, vol.~24, no.~9, 1996, pp.~2963-2980.


\bibitem[DM05]{ding and mao 05}
N. Ding and L. Mao, \emph{Reletive FP-projective modules}, Comm. Algebra,
vol.~33, no.~5, 2005, pp.~1587-1602.

\bibitem[DM07]{ding and mao 07}
N. Ding and L. Mao, \emph{Envelopes and covers by modules of finite FP-injective and flat dimensions}, Comm. Algebra,
vol.~35, no.~3, 2007, pp.~833--849.

\bibitem[DM08]{ding and mao 08}
N. Ding and L. Mao, \emph{Gorenstein FP-injective and Gorenstein flat modules}, J. Algebra Appl.
\textbf{7} (2008), no.~4, 491--506.

\bibitem[DLM09]{ding and mao and li 09}
N. Ding and Y. Li and L. Mao, \emph{Strongly Gorenstein flat modules}, J. Aust. Math. Soc. vol.~86, no.~3, 2009, pp.~323-338.

\bibitem[EGR97]{enochs-garcia-rozas}
E.~Enochs and J.R. Garc\'\i{}a-Rozas, \emph{Tensor products of
chain complexes}, Math J. Okayama Univ. 39, 1997, pp. 19--42.

\bibitem[EE05]{enochs-estrada-iacob}
E.~Enochs, S.~Estrada, and A.~Iacob, \emph{Gorenstein projective and flat complexes over noetherian rings}, Mathematische Nachrichten vol.~285, no.~7, 2012, pp.~834--851.

\bibitem[EJ00]{enochs-jenda-book}
Edgar~E. Enochs and Overtoun M.~G. Jenda, \emph{Relative homological algebra},
  de Gruyter Expositions in Mathematics, vol.~30, Walter de Gruyter \& Co.,
  Berlin, 2000. \MR{2001h:16013}


 \bibitem[EIJ07]{enochs-iacob-jenda}
 E.~Enochs, A.~Iacob, and O.M.G.~Jenda, \emph{Closure under Transfinite Extensions}, Illinois Journal of Mathematics vol.~51, no.~2, 2007, pp.561--569.


 \bibitem[GR99]{garcia-rozas}
J.~R.~Garc\'\i{}a-Rozas, \emph{Covers and envelopes in the
category of complexes of modules}, Research Notes in Mathematics
no. 407, Chapman \& Hall/CRC, Boca Raton, FL, 1999.

 \bibitem[Gil04]{gillespie}
 James Gillespie, \emph{The flat model structure on Ch(R)}, Trans.
 Amer. Math. Soc. vol.~356, no.~8, 2004, pp.~3369--3390.

 \bibitem[Gil07]{gillespie-quasi-coherent}
 James Gillespie, \emph{Kaplansky classes and derived categories},
 Math. Zeit. vol.~257, no.~4, 2007, pp.811--843.

\bibitem[Gil10]{gillespie-ding}
James Gillespie, \emph{Model structures on modules over {D}ing-{C}hen rings},
  Homology, Homotopy Appl. \textbf{12} (2010), no.~1, 61--73. \MR{2607410
  (2011e:18027)}

\bibitem[Gil12]{gillespie-recollement}
James Gillespie, \emph{Gorenstein complexes and recollements from cotorsion
  pairs}, preprint; arXiv 1210.0196, 2012.

\bibitem[Gil15]{gillespie-hovey triples}
James Gillespie, \emph{How to construct a Hovey triple from two cotorsion pairs}, Fundamenta Mathematicae, Vol.~230, no.3, 2015, pp.~281--289.


\bibitem[GT06]{trlifaj-book}
R{\"u}diger G{\"o}bel and Jan Trlifaj, \emph{Approximations and Endomorphism Algebras of Modules},
  de Gruyter Expositions in Mathematics, vol.~41, Walter de Gruyter \& Co.,
  Berlin, 2006.



 \bibitem[Hov02]{hovey}
 Mark Hovey, \emph{Cotorsion pairs, model category structures,
 and representation theory}, Mathematische Zeitschrift, vol.~241,
 2002, pp.553--592.


\bibitem[Iwa79]{iwanaga}
Y.~Iwanaga, \emph{On rings with finite self-injective dimension}, Comm.
Algebra, vol.~7, no.~4, 1979, pp.~393-414.

\bibitem[Iwa80]{iwanaga2}
Y.~Iwanaga, \emph{On rings with finite self-injective dimension II},
Tsukuba J. Math. vol.~4, no.~1, 1980, pp.~107-113.


\bibitem[Lam99]{lam}
T.~Y. Lam, \emph{Lectures on modules and rings}, Graduate Texts in Mathematics,
  vol. 189, Springer-Verlag, New York, 1999. \MR{MR1653294 (99i:16001)}


\bibitem[Sto14]{stovicek-purity}
Jan {\v{S}}\v{t}ov{\'{\i}}{\v{c}}ek,, \emph{On purity and applications to coderived and singularity categories},  arXiv:1412.1615.


\bibitem[Ste70]{stenstrom-fp}
Bo~Stenstr{\"o}m, \emph{Coherent rings and {$F\,P$}-injective modules}, J.
  London Math. Soc. (2) \textbf{2} (1970), 323--329. \MR{MR0258888 (41 \#3533)}

 \bibitem[Sten75]{stenstrom}
 Bo Stenstr\"{o}m, \emph{Rings of Quotients}, Die Grundlehren der
 mathematischen Wissenschaften in Einzeldarstellungen \ Band~217,
 Springer-Verlag, New York 1975.

 \bibitem[Wei94]{weibel}
 Charles A. Weibel, \emph{An Introduction to Homological Algebra},
 Cambridge Studies in Advanced Mathematics vol.~38, Cambridge
 University Press, 1994.

\bibitem[YLL11]{Liang-Liu}
Gang Yang, Zhongkui Liu, Li Liang, \emph{Ding Projective and Ding Injective Modules}, Algebra Colloquium \textbf{20} (2013), no.~04, 601--612.

\bibitem[YLL13]{Ding-Chen-complex-models}
Gang Yang, Zhongkui Liu, and Li Liang, \emph{Model structures on categories of complexes over Ding-Chen rings}, Communications in Algebra, vol.~41, 2013, pp.~50--69.


\bibitem[YL11]{GangYang-Liu models on complexes}
Gang Yang and Zhongkui Liu, \emph{Cotorsion pairs and model structures on $\ch$}, Proc. Edinb. Math. Soc.~(2) vol.~54, no.~3, 2011, pp.~783--797.

\end{thebibliography}

\providecommand{\bysame}{\leavevmode\hbox to3em{\hrulefill}\thinspace}
\providecommand{\MR}{\relax\ifhmode\unskip\space\fi MR }
\providecommand{\MRhref}[2]{%
  \href{http://www.ams.org/mathscinet-getitem?mr=#1}{#2}
}
\providecommand{\href}[2]{#2}

\end{document}